\documentclass[11pt,a4paper]{article}
\setlength{\textwidth} {15 cm}
\setlength{\textheight} {22 cm}
\setlength{\oddsidemargin} {0.5 cm}
\setlength{\topmargin} {-1.0 cm}

\usepackage{amsmath,amsthm,amssymb}
\usepackage{graphicx}
\usepackage{subfigure}
\usepackage{multirow}
\newtheorem{theorem}{Theorem}[section]
\newtheorem{lemma}{Lemma}[section]

\newtheorem{remark}{Remark}[section]

\newcommand{\se}{\setcounter{equation}{0}}

\newcommand{\cT}{\mathcal{T}}

\newcommand{\vhs}{V_h^*}


\newcommand{\cL}{A}

\def \R{{{\rm I{\!}\rm R}}}
\newcommand{\Ba}{\partial_t^{1-\alpha}}
\newcommand{\I}{\mathcal{I}}
\newcommand{\J}{\mathcal{J}}

\newcommand{\Bac}{{^C}\partial_t^{\alpha}}

\newcommand{\norm}[1]{{\left\vert\kern-0.25ex\left\vert\kern-0.25ex\left\vert #1 
    \right\vert\kern-0.25ex\right\vert\kern-0.25ex\right\vert}}




\title{Error analysis of a finite volume element method for fractional order evolution equations 
with nonsmooth initial data} 
\author{Samir Karaa{\thanks{ Department of Mathematics and Statistics, Sultan Qaboos University,
 P. O. Box 36, Al-Khod 123,
 Muscat, Oman. Email: skaraa@squ.edu.om. The research of this author is supported by Sultan Qaboos University under Grant
 IG/SCI/DOMS/16/01.} }
 and
 Amiya K. Pani{\footnote{
 Department of Mathematics, Industrial Mathematics Group, Indian
 Institute of Technology Bombay,
 Powai, Mumbai-400076.
 }}}
\begin{document}
\date{}
\maketitle
\begin{abstract}
{In this paper,
a finite volume element (FVE) method is considered for spatial approximations of time-fractional diffusion 
equations involving a Riemann-Liouville fractional derivative of order $\alpha \in (0,1)$ in time. 
Improving upon earlier results (Karaa {\it et al.}, IMA J. Numer. Anal. 2016), 
optimal 
error estimates  in $L^2(\Omega)$- and $H^1(\Omega)$-norms 
for the  semidiscrete problem 
with smooth and middly smooth initial data, i.e., $v\in H^2(\Omega)\cap  H^1_0(\Omega)$ 
and $v\in H^1_0(\Omega)$ are established. For nonsmooth  data, that is, $v\in L^2(\Omega)$, 
the optimal $L^2(\Omega)$-error estimate
is shown to hold only under an additional assumption on the triangulation, which is known to be 
satisfied for symmetric triangulations. Superconvergence result is also proved and as a consequence,
 a quasi-optimal error estimate is established in the $L^\infty(\Omega)$-norm.
Further,  two fully discrete schemes using convolution quadrature in time generated by the 
backward Euler and the second-order backward difference methods are analyzed, and error estimates are 
derived for both smooth and nonsmooth initial data. 
Based on a comparison  of the standard Galerkin finite element
 solution with the FVE solution and exploiting tools for Laplace transforms with semigroup type properties of the 
FVE solution operator, our analysis is then extended in a unified manner to several 
time-fractional order evolution problems. 
Finally, several  numerical experiments are  conducted
to confirm  our theoretical findings.}
\end{abstract}

{\small{\bf Key words.} fractional order evolution equation, subdiffusion, finite volume element method,   Laplace transform,
backward Euler and second-order backward difference methods, convolution quadrature, optimal error estimate, 
smooth and nonsmooth data.
}

\medskip
{\small {\bf AMS subject classifications.} 65M60, 65M12, 65M15}

\section{Introduction}
\noindent
Let $\Omega$ be a bounded, convex polygonal domain in $\mathbb{R}^2$  with
boundary $\partial \Omega$, $T>0,$  and let $v$ be a given function (initial data) defined on $\Omega$. 
We now consider the following  time-fractional diffusion problem: find $u$  in 
$ \Omega\times (0,T] $ such that
\begin{subequations}\label{main}
\begin{alignat}{2}\label{a1}
&u'(x,t)+\Ba \cL u(x,t)=0 &&\quad\mbox{ in }\Omega\times (0,T],
\\  \label{a2}
&u(x,t)= 0 &&\quad\mbox{ on }\partial\Omega\times (0,T],
\\   \label{a3}
&u(x,0)=v(x) &&\quad\mbox{ in }\Omega,
\end{alignat}
\end{subequations}
where $\cL u=-\Delta u$,  $u'$ is the partial derivative of
  $u$ with respect to time, and $\Ba:={^R}{\rm D}^{1-\alpha}$ is the Riemann-Liouville  fractional
derivative in time defined for $0<\alpha<1$  by:  
\begin{equation} \label{Ba}
\Ba \varphi(t):=\frac{d }{d t}\I^{\alpha}\varphi(t):=\frac{d }{d t}\int_0^t\omega_{\alpha}(t-s)\varphi(s)\,ds\quad\text{with} \quad
\omega_{\alpha}(t):=\frac{t^{\alpha-1}}{\Gamma(\alpha)}.
\end{equation}
Here,  $\I^{\alpha}$ denotes the temporal Riemann-Liouville fractional integral operator of order $\alpha$. 
This class of problems describes the model of  an anomalous subdiffusion, see \cite{GMMP}, \cite{HW}
and  \cite{MK}.

Over the last two decades, 
considerable attention 
from both practical and theoretical point of views has been given  to fractional  diffusion models due to
their various applications. Several numerical techniques  for the 
 problem \eqref{main}  have been proposed with different types of spatial discretizations.
The finite element (FE) method has, in particular,  been given a special attention  in approximating the solution 
of the problem \eqref{main},  see \cite{ McLeanThomee2004,MT2010,McLeanThomee2010, MustaphaMcLean2011, 
JLZ2013,JLPZ2015, JLZ2016,EJLZ2016} and references, there in.  Most recently, a 
FVE method is analyzed in \cite{KMP2015} and  {\it a prior} error estimates 
with respect to data regularity  have been derived.

Although the numerical study of  \eqref{main} has been  discussed in a large number of papers,
optimal error estimates with respect to the smoothness of the solution  expressed through initial data
have been established only in few papers recently. 
This is due to the presence of time-fractional derivative, and hence, deriving  sharp error bounds under 
reasonable regularity assumptions on the exact solution has become a  challenging task.

To motivate our results, we begin by recalling some facts on the spatially semidiscrete standard 
Galerkin FE method for the problem \eqref{main} in the piecewise FE element space
$$V_h=\{\chi\in C^0(\overline {\Omega})\;:\;\chi|_{K}\;\mbox{is linear for all}~ K\in \cT_h\; 
\mbox{and} \; \chi|_{\partial \Omega}=0\},$$
where $\{\cT_h\}_{0<h<1}$ is a family of regular  triangulations $\cT_h$ of
the domain $\Omega$ into triangles  $K$  with  $h$ denoting the maximum diameter  of the triangles $K\in \cT_h$.  
With $a(\cdot,\cdot)$ denoting the bilinear form associated with the operator $A$, and $(\cdot,\cdot)$ the inner product in
$L^2(\Omega)$, the semidiscrete Galerkin FE method is  to seek  $u_h(t)\in V_h$ satisfying
\begin{equation} \label{semi-FE}
( u_h',\chi)+ a(\Ba u_h,\chi)=  0\quad
\forall \chi\in V_h,\quad t\in (0,T], \quad u_h(0)=v_h,
\end{equation}
where $a(v,w):= (\nabla v, \nabla w)$ and $v_h\in V_h$  is an approximation of the initial data $v$. Upon introducing  the discrete 
operator $A_h:V_h\rightarrow V_h$ defined by
$$
(A_h\psi,\chi)=(\nabla \psi,\nabla \chi)  \quad \forall \psi,\chi\in V_h,
$$
the semidiscrete FE scheme \eqref{semi-FE}  is  rewritten in an operator form as
\begin{equation} \label{FEP}
 u_h'(t)+\Ba A_h  u_h(t)=  0, \quad t>0, \quad  u_h(0)=v_h.
\end{equation}
In  \cite{MT2010}, McLean  and Thom\'ee have established the following estimate for the Galerkin FE 
approximation to  \eqref{main}: with $v_h=P_hv$, there holds for $t>0$
\begin{equation} \label{FE-1}
\|u_h(t)-u(t)\|\leq Ch^2t^{-\alpha(2-q)/2}|v|_{q}, \quad 0\leq q\leq 2,
\end{equation}
where $\|v\|$ is the $L^2(\Omega)$-norm of $v$ and  $|v|_{q}=\|A^{q/2}v\|$ is a weighted norm defined on the space 
$\dot{H}^q(\Omega)$ to be  described in 
Section 2. Here, $P_h:L^2(\Omega)\rightarrow V_h$ is the $L^2$-projection given by : 
$(P_hv-v,\chi)=0$ for all $\chi\in V_h$. For a smooth initial data, that is, $v\in \dot{H}^2(\Omega)$, 
the estimate \eqref{FE-1} is still valid for the initial approximation $v_h=R_hv,$ where  
$R_h:H_0^1(\Omega)\rightarrow V_h$ is the standard Ritz projection defined by the relation: 
$a(R_hv-v,\chi)=0$ for all $\chi\in V_h$.
The estimate \eqref{FE-1} extends  results obtained for the standard parabolic problem, i.e, $\alpha=1$, 
which has been thoroughly studied, see  \cite{thomee1997}. In the recent work \cite{EJLZ2016}, 
an approach based on Laplace transform and semigroup type theory
 has been exploited to derive {\it a priori} error estimates of the type \eqref{FE-1}, and 
most recently, a delicate energy analysis has been developed in \cite{KMP2016}  to obtain similar
estimates.

Regarding the optimal estimate in the gradient norm,  the following result
\begin{equation} \label{FE-2}
\|\nabla(u_h(t)-u(t))\|\leq Cht^{-\alpha(2-q)/2}|v|_{q}, \quad 0\leq q\leq 2,
\end{equation}
holds with $v_h=P_hv$ on quasi-uniform meshes. For the cases $q=1,2$, one can also choose $v_h=R_hv$. 
However, without the quasi-uniformity  assumption on the mesh, the estimate \eqref{FE-2} remains valid only for $0\leq q\leq 1$, see \cite{KMP2016}.

Optimal convergence rate up to a logarithmic factor in the stronger $L^\infty(\Omega)$-norm has 
been derived in \cite{McLeanThomee2010,KMP2016}. While in \cite{McLeanThomee2010}, Laplace transform
technique combined with semigroup type theoretic approach is used to derive maximum norm estimates,
in \cite{KMP2016}  a novel
energy argument combined with Sobolev inequality for $2$D-problems is employed 
to establish, under quasi-uniformity assumption on the mesh, the following $L^{\infty}(\Omega)$-error estimate 
for $v \in \dot H^q(\Omega)\cap L^\infty(\Omega)$
and $v_h=P_h v$   
\begin{equation} \label{FE-3}
 \|u(t)-u_h(t)\|_{L^{\infty}(\Omega)} \leq
 C|\ln h|^{\frac{5}{2}} h^2 t^{-\alpha(3-q)/2}(|v|_q+\|v\|_{L^\infty(\Omega)}), \quad 1\le q\le 2.
\end{equation}

In this article, we discuss the error analysis of the approximate solution $\bar{u}_h$  
satisfying the following FVE  method:
\begin{equation} \label{semi-FV}
(\bar{u}_h',\chi)_h+ a(\Ba \bar{u}_h,\chi)=  0\quad
\forall \chi\in V_h,\quad t\in (0,T], \quad \bar{u}_h(0)=v_h,
\end{equation}
where $(\cdot,\cdot)_h$ is a discrete inner product  on $V_h$ to be defined in  Section \ref{sec: FVM}. Here, one of our objective is to 
establish the analogous of estimates
(\ref{FE-1}) and (\ref{FE-2})  for the solution of the FVE semidiscrete problem \eqref{semi-FV}, namely;
 with the appropriate choices of $v_h$,  
\begin{equation} \label{FVE}
\|{\bar u}_h(t)-u(t)\|+h \|\nabla({\bar u}_h(t)-u(t))\|\leq Ch^2t^{-\alpha(2-q)/2}|v|_{q},  \quad 0\leq q\leq 2.
\end{equation}
We shall derive this  estimate for $q=1,\,2$ in Section \ref{sec:H2} and 
for $q=0$ in Section \ref{sec:L2}. For the latter case, we are only able to prove the {\it a priori} 
estimate under an additional hypothesis on ${\mathcal T}_h$, which is known to be satisfied for 
symmetric triangulations. 
Without any such condition,  only sub-optimal order convergence is  obtained, which is similar to the 
result proved in \cite{CLT-2013} for linear parabolic problems. For the stronger $L^\infty(\Omega)$-norm, 
a quasi-optimal error estimate analogous to (\ref{FE-3}) is established for $1\leq q\leq 2$.

Our analysis provides improvements of earlier results in \cite{KMP2015}, where the initial data $v$ is 
required to be in $\dot H^q(\Omega)$ with $q\geq 3$. Unlike  the classical FE error analysis
 in which  an intermediate projection, usually, a Ritz projection, is introduced to derive optimal error 
 estimates, our approach, here, shall combine the error estimates for the standard Galerkin FE
 solution stated above with new bounds for the difference $\xi(t)=\bar u_h(t)-u_h(t)$. A similar idea  
 has been used in \cite{CLT-2012} and 
\cite{CLT-2013} for the approximation of the standard parabolic problem by the lumped mass FE method 
and the  FVE method, respectively, leading to an improvement of their earlier results 
in \cite{CLT-2004}.

Our second objective is to analyze two fully discrete schemes for the semidiscrete problem 
\eqref{semi-FV} based on convolution quadrature in time generated by the backward Euler and 
the second-order backward difference methods. Error estimates with respect to the data regularity 
are provided in Theorems \ref{thm:BE} and \ref{thm:SBD}. For instance, it is shown that the discrete 
solution $U_h^n$ obtained by the backward Euler method 
with a time step size $\tau$ satisfies the following {\it a priori} error estimate 
$$
\|U_h^n-{\bar u}_h(t_n)\|\leq C(\tau^{-1+\alpha q/2}+ h^2t_n^{-\alpha(1-q/2)})|v|_{q},  \quad q=0,1,2.
$$
When $q=0,$ an additional restriction on the triangulation is imposed. A similar type of error bound 
is shown to hold for the second-order backward difference scheme in Subsection 5.2.

Our third objective is to generalize our results on FVE method for both smooth and nonsmooth initial data
to other classes of fractional order evolution equations in Section 6. Say for example,  we can extend our
FVE analysis to  the following 
class of time-fractional problems: 
\begin{equation}\label{J-alpha-class}
u'(x,t)+ \J^{\alpha} \cL u(x,t)=0 \quad\mbox{ in }\Omega\times (0,T],
\end{equation}
with homogeneous Dirichlet boundary conditions and initial condition $u(x,0)= v(x)$ for $x\in \Omega.$
When $\J^{\alpha}= \I^{\alpha},$ this class of problems is known as fractional diffusion-wave equation or 
evolution equation with positive memory, see \cite{LST-1996, MT2010} and references, therein.  The case $\J^{\alpha}= 
I+\I^{\alpha}$ corresponds to the PIDE with singular kernel, refer to \cite{MST2006}. Now if $\J^{\alpha} = I + \Ba,$ 
then this class of
problems is known as the Rayleigh-Stokes problems for generalized second grade fluid, see \cite{EJLZ2016}.   
Even our FVE analysis can be directly  applied  to the following time-fractional order diffusion problem:
\begin{equation}\label{caputo}
\Bac u(x,t) 
+ \cL u(x,t)= 0,
\end{equation}
where $\Bac v(t):=\I^{1-\alpha}v'(t)$ is the  fractional Caputo derivative of order $0<\alpha<1.$
For the semidiscrete FE analysis of (\ref{caputo}), we refer to 
Jin  {\it et al.} \cite{JLZ2013}. The unifying analysis of all these classes of evolution problems is based on 
 comparing the FVE solution with the corresponding FE solution and exploiting the Laplace transform technique
 along with semigroup type properties  of the FVE solution operator.


The rest of the paper is organized as follows. In the next section, we introduce notation, 
recall the solution representation for the continuous problem \eqref{main} and some smoothing properties of the solution operator, which  play an important role in our subsequent error analysis. Section 3 deals with
 a brief description of 
the spatially semidiscrete  FVE scheme and their properties. In Section \ref{sec:error},
 we  derive error estimates for the semidiscrete FVE scheme for smooth and nonsmooth initial data 
 $v\in \dot H^q$, $q=0,1,2$ in Subsections \ref{sec:H2} and \ref{sec:L2}. For  $q=0$, i.e., $v\in L^2(\Omega)$, 
 we show an optimal error bound under 
 an additional  assumption on the triangulation. Superconveregence result is proved in 
 Subsection \ref{sec:Linfty} and as a consequence, a quasi-optimal error estimate is established in the 
$L^\infty(\Omega)$-norm.
In Section \ref{sec:discrete}, two fully discrete schemes  based on convolution quadrature 
approximation of the fractional derivative are presented and error estimates are established. 
Section \ref{sec:extensions} focuses on possible generalization of the present FVE error analysis to 
various types of time-fractional  evolution problems.  Finally, 
in Section \ref{sec:NE}, we present  numerical results  to confirm  our theoretical findings. 

Throughout the paper, $C$ denotes a generic positive constant that may depend on $\alpha$ and $T$, 
but is independent of the spatial mesh element size $h$.

\section{Representation of exact solution and properties } \label{sec:notation}
\se
We first introduce some notations.  Let $\{(\lambda_j,\phi_j)\}_{j=1}^\infty$ be the Dirichlet eigenpairs 
of  the selfadjoint and positive definite operator $\cL$, with $\{\phi_j\}_{j=1}^\infty$ being an 
 orthonormal basis in $L^2(\Omega)$.
For  $r\geq 0$, we denote by  $\dot H^r(\Omega)\subset L^2(\Omega)$  the Hilbert space 
induced by the norm
 \[ |v|_r^2 =\|\cL^{r/2}v\|^2 =\sum_{j=1}^\infty \lambda_j^r (v,\phi_j)^2,\]
 with $(\cdot,\cdot)$ being the inner product on $L^2(\Omega)$.
Then,  it follows that 
$
\dot{H}^r(\Omega)=\{\chi \in H^r(\Omega);\, \cL^j\chi=0 \text{ on } \partial \Omega, \text{ for } j<s/2\}$, 
see \cite[Lemma 3.1]{thomee1997}. In particular, $|v|_0=\|v\|$ is the norm on $L^2(\Omega)$, 
$|v|_1=\|\nabla v\|$ is also the norm on $H_0^1(\Omega)$ and $|v|_2=\|A v\|$ is the equivalent 
norm in $H^2(\Omega)\cap H^1_0(\Omega)$. Note that $\{\dot H^r(\Omega)\}$, $r\geq 0$,  form a Hilbert scale of interpolation spaces. Motivated by this, we denote  by $\|\cdot\|_{H^r_0(\Omega)}$ the norm on the interpolation scale between $H^2(\Omega)\cap H^1_0(\Omega)$ and $L^2(\Omega)$ for $r$ in the interval $[0,2]$. 
Then, the $\dot H^r(\Omega)$ and $H^r_0(\Omega)$ norms are equivalent for any $r\in (1/2,2]$  for $r\in [0,1/2],$
$\dot H^r(\Omega)= H^r(\Omega)$ by interpolation.

For $\delta>0$ and $\theta\in (\pi/2,\pi)$, we introduce the contour $\Gamma_{\theta,\delta}\subset \mathbb{C}$ defined by
$$
\Gamma_{\theta,\delta}=\{\rho e^{\pm i\theta}:\rho\geq \delta\}\cup\{\delta e^{i\psi}: |\psi|\leq \theta\},
$$
oriented with an increasing imaginary part. Further, we denote by $\Sigma_{\theta}$ the sector
$$
\Sigma_{\theta}=\{z\in \mathbb{C}, \,z\neq 0,\, |\arg z|< \theta\}.
$$
For $z\in \Sigma_\theta$, it is clear that $z^\alpha \in \Sigma_\theta$ as $\alpha \in (0,1)$. 
Since the operator $A$ is selfadjoint and positive definite, its resolvent 
$(z^\alpha I+A)^{-1}:L^2(\Omega)\rightarrow L^2(\Omega)$ satisfies the bound
\begin{equation}\label{res1}
\|(z^\alpha I+A)^{-1}\|\leq M_\theta |z|^{-\alpha} \quad \forall z\in \Sigma_\theta,
\end{equation}
where $M_\theta=1/\sin(\pi-\theta)$. 
We now make use of  the  Laplace transform $\hat{u}:=\mathcal{L}(u)$ of the solution $u$ defined by
$$
\hat{u}(z,x)=\int_0^\infty e^{-zt}u(t,x)\,dt.
$$
The boundary condition $u(x,t)=0$ on $\partial \Omega$ transforms into  $\hat u(x,z)=0$ 
on $\partial \Omega$. 
Taking Laplace transforms in (\ref{a1}), we, then, arrive at
\begin{equation}\label{m1}
(zI+z^{1-\alpha}A)\hat{u}(z)=v,
\end{equation}
and hence, 
\begin{equation}\label{m2}
\hat{u}(z)= \hat{E}(z)v, \quad \hat{E}(z)= z^{\alpha-1}(z^{\alpha}I+A)^{-1}.
\end{equation}
In view of  (\ref{res1}) and (\ref{m2}), $\hat{E}(z)$ satisfies the following bound
\begin{equation}\label{m3}
\|\hat{E}(z)\|\leq M_\theta |z|^{-1} \quad \forall z\in \Sigma_\theta.
\end{equation}
From \eqref{m2}, the Laplace inversion formula yields an integral representation 
for the solution of (\ref{main}) as 
\begin{equation}\label{mm}
u(t)=\frac{1}{2\pi i}\int_{\mathcal C} e^{zt}\hat{E}(z)v\,dz, \quad t>0,
\end{equation}
where the contour of integration ${\mathcal C}$, known as Bromwich contour,  is any line in the right-half plane parallel to the imaginary axis  and with Im$z$ increasing. Since $\hat{E}(z)$ is analytic in $\Sigma_\theta$ and satisfies the bound \eqref{m3}, the path of integration may, therefore, be deformed 
into the curve $\Gamma_{\theta,\delta}$ so that the integrand has an exponential decay property.

In the next lemma, we present some smoothing properties of the operator $\hat{E}(z)$ which play a key 
role in our error analysis. The estimates are proved for instance  in \cite[Lemma 2.2]{Lubich-2006}. 
Note that the first estimate  \eqref{00} given below is obtained by interpolation technique.
%
\begin{lemma}\label{lem:Ah} The following estimates hold:
\begin{equation}\label{00}
 \|A\hat{E}(z)\chi\|\leq C_{\theta} |z|^{\alpha(1-p/2)-1} |\chi|_p \quad \forall z\in\Sigma_\theta,\quad 0\leq p\leq 2, 
 \end{equation} 
 \begin{equation}\label{11}
 \|\nabla\hat{E}(z)\chi\|\leq C_{\theta} |z|^{\alpha/2-1} \|\chi\| \quad \forall z\in\Sigma_\theta,
 \end{equation} 
 where $C_{\theta}$ depends only on $\theta$.
\end{lemma}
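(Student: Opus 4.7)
The plan is to derive both bounds directly from the resolvent estimate (2.3) by simple algebraic rearrangements, and then to fill in intermediate values of $p$ in (2.6) by interpolation on the scale $\{\dot H^r(\Omega)\}$. The key preliminary observation is that if $z\in\Sigma_\theta$ and $\alpha\in(0,1)$ then $|\arg z^\alpha|=\alpha|\arg z|<\theta$, so $z^\alpha\in\Sigma_\theta$ and (2.3) applies in the form $\|(z^\alpha I+A)^{-1}\|\le M_\theta|z|^{-\alpha}$ with the same constant $M_\theta$; this is precisely what ensures that all constants in the lemma can be tracked as depending on $\theta$ alone.

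For the $p=0$ endpoint of (2.6), I would use the algebraic identity $A(z^\alpha I+A)^{-1}=I-z^\alpha(z^\alpha I+A)^{-1}$ to rewrite
$$A\hat E(z)\chi = z^{\alpha-1}\bigl(\chi - z^\alpha(z^\alpha I+A)^{-1}\chi\bigr),$$
and apply (2.3) to obtain $\|A\hat E(z)\chi\|\le C_\theta|z|^{\alpha-1}\|\chi\|$. For the $p=2$ endpoint, since $A$ commutes with its resolvent, I would instead write $A(z^\alpha I+A)^{-1}\chi=(z^\alpha I+A)^{-1}A\chi$ and apply (2.3) directly to conclude $\|A\hat E(z)\chi\|\le M_\theta|z|^{-1}|\chi|_2$. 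Complex interpolation between these two endpoints on the Hilbert scale yields, for $0<p<2$, the intermediate bound with exponent $(1-p/2)(\alpha-1)+(p/2)(-1)=\alpha(1-p/2)-1$, which is exactly the claim (2.6). Equivalently, one could expand $\chi=\sum c_j\phi_j$ in the eigenbasis and bound $\lambda_j/|z^\alpha+\lambda_j|$ by suitable powers of its two endpoint estimates; the spectral calculation is the cleanest way to confirm the constant is independent of $\alpha$.

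For the gradient bound (2.7), I would appeal to $\|\nabla w\|^2=(Aw,w)$ with $w=\hat E(z)\chi\in H^1_0(\Omega)$ and Cauchy--Schwarz:
$$\|\nabla\hat E(z)\chi\|^2 \le \|A\hat E(z)\chi\|\,\|\hat E(z)\chi\|.$$
The first factor is controlled by the $p=0$ case of (2.6) just proved, while the second factor is controlled by (2.5); their product is $C_\theta|z|^{\alpha-2}\|\chi\|^2$, and taking square roots gives (2.7). There is essentially no obstacle here, since the whole lemma reduces to resolvent manipulations enabled by the sector invariance $z\in\Sigma_\theta\Rightarrow z^\alpha\in\Sigma_\theta$; the only minor care needed is the verification that the Cauchy--Schwarz splitting absorbs the factor $|z|^{\alpha-2}$ into the claimed $|z|^{\alpha/2-1}$ after taking the square root.
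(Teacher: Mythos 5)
Your proof is correct and follows essentially the route the paper itself indicates: the paper gives no details beyond citing \cite[Lemma~2.2]{Lubich-2006} and remarking that \eqref{00} is obtained by interpolation, and your argument supplies exactly that — the $p=0$ and $p=2$ endpoints from the resolvent bound \eqref{res1} together with the sector invariance $z\in\Sigma_\theta\Rightarrow z^\alpha\in\Sigma_\theta$, interpolated on the scale $\dot H^p(\Omega)$, plus the standard $\|\nabla w\|^2=(Aw,w)\le\|Aw\|\,\|w\|$ splitting for \eqref{11}. All the exponent arithmetic checks out, and your constants depend only on $M_\theta$, hence only on $\theta$ as claimed.
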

In the next section, we introduce the semidiscrete finite volume element scheme.


\section {Semidiscrete FVE scheme and its properties}\label{sec: FVM}

To describe the finite volume element  formulation,  we first introduce the dual mesh on the domain $\Omega$.
Let $N_h$ be the set of nodes or vertices, that is, $$N_h :=\left\{P_i:P_i~~\mbox{ is a vertex of the element }~K \in
\cT_h~\mbox{and}~P_i\in \overline{\Omega}\right\}$$ and let
$N_h^0$ be the set of interior nodes in $\cT_h.$
Further, let $\cT_h^*$ be the dual mesh associated with the primary mesh $\cT_h,$ which is defined as follows. With $P_0$ as an interior node of the triangulation $\cT_h,$ let  $P_i\;(i=1,2\cdots
m)$ be its adjacent nodes (see, Figure ~\ref{fig:mesh} with $m=6$ ). Let $M_i,~i=1,2\cdots
m$ denote the midpoints of $\overline{P_0P_i}$ and let $Q_i,~i=1,2\cdots
m,$  be  the barycenters of the triangle $\triangle P_0P_iP_{i+1}$ with
$P_{m+1}=P_1$. The {\it control volume}
  $K_{P_0}^*$ is constructed  by joining successively $ M_1,~ Q_1,\cdots
  ,~ M_m,~ Q_m,~ M_1$. With $Q_i ~(i=1,2\cdots
m)$ as the nodes of $control~volume~$ $K^*_{p_i},$ let $N_h^*$ be the set of all dual nodes
$Q_i$. For a  boundary
node $P_1$,  the control volume $K_{P_1}^*$ is shown in Figure ~\ref{fig:mesh}. Note that the union
of the control volumes forms a partition $\cT_h^*$ of $\overline{\Omega}$.

\begin{figure}[htb]
 \begin{center}
  \includegraphics*[width=8.0cm,height=4.0cm]{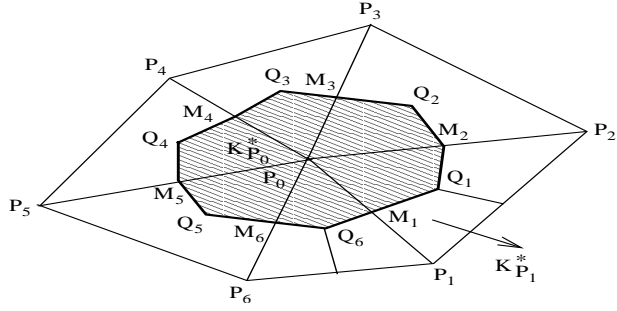}
    \caption{Control volume for interior node}
      \label{fig:mesh}
   \end{center}
\end{figure}

The dual volume element space $\vhs$ on the dual mesh $\cT^*_h$ is defined as 
$$\vhs=\{\chi\in L^2(\Omega)\;:\;\chi|_{K_{P_0}^*}\;\mbox{is constant for all}\; K_{P_0}^*\in \cT_h^*\; \mbox{and}\; \chi|_{\partial \Omega}=0\}.$$
The semidiscrete FVE  formulation for \eqref{main} is
to seek  $\bar u_h(t) \in  V_h$ such that
\begin{equation} \label{FV}
(\bar u_h',\chi)+ a_h(\Ba \bar u_h,\chi)=0 \quad
\forall \chi\in \vhs,\quad t>0, \quad \bar u_h(0)=v_h,
\end{equation}
where the bilinear form $a_h(\cdot, \cdot): V_h\times \vhs\longrightarrow \R$ is  defined by
\begin{equation} \label{Ah}
a_h(\psi,\chi)= -\sum_{P_i\in N_h^0} \chi(P_i) \int_{\partial K_{P_i}^*}
\nabla \psi\cdot{\bf n}\,ds\quad \forall \psi \in V_h,\; \chi\in  \vhs
\end{equation}
 with ${\bf n}$ denoting the outward unit normal to the boundary of the control volume
$K_{P_i}^*$.  For $w\in H^2(\Omega)$ and $\chi\in  \vhs$,  a use of Green's formula yields
\begin{equation*}\label{eq: h2 identity}
 (\cL w, \chi)= a_h(w,\chi).
\end{equation*}

To rewrite the Petrov-Galerkin method (\ref{FV}) as a Galerkin method in $V_h$, 
we introduce the interpolation operator
$\Pi_h^*:C^0(\bar{\Omega})\longrightarrow \vhs$  by
\begin{equation*}\label{naa}
\Pi_h^*\chi=\sum_{P_i\in N_h^0}\chi(P_i)\eta_i(x),
\end{equation*}
where $\eta_i$ is the characteristic function of the control volume $K_{P_i}^*$. The operator $\Pi_h^*$ 
is selfadjoint and positive definite, see \cite{ChouLi2000}, and hence, the following relation
\begin{equation*}\label{inner-1}
(\psi,\chi)_h=(\psi,\Pi_h^*\chi)\quad \forall \psi, \chi\in V_h
\end{equation*}
defines an inner product on $V_h$. Also, the corresponding norm  $(\chi,\chi)_h^{1/2}$ is equivalent 
to the $L^2(\Omega)$-norm on $V_h$, uniformly in $h$, 
see \cite{LiChenWu2000}. Furthermore, from the following identity \cite{BR-1987, EwingLazarovLin2000} 
$$
a_h(\chi,\Pi_h^* v)=(\nabla \chi,\nabla v)\quad \forall \chi, v\in V_h,
$$
the bilinear form $a_h(.,.)$ is symmetric and $a_h(\chi,\Pi_h^*\chi)=\|\nabla\chi\|^2$ for $\chi\in V_h$.

%
We now introduce the discrete operator $\bar A_h:V_h\rightarrow V_h$ corresponding to the inner product 
$(\cdot,\cdot)_h$ by
\begin{equation*} \label{Dh}
(\bar A_h\psi,\chi)_h=(\nabla \psi,\nabla \chi)  \quad \forall \psi,\chi\in V_h.
\end{equation*}
Then, the FVE method \eqref{semi-FV} is written in an operator form as
\begin{equation} \label{FVP}
\bar u_h'(t)+\Ba \bar A_h \bar u_h(t)=  0, \quad t>0, \quad \bar u_h(0)=v_h.
\end{equation}
%

An appropriate modification of arguments in \cite{CLT-2013,JLZ2013} yields the following
discrete analogous of Lemma \ref{lem:Ah} and therefore, we skip the proof.
\begin{lemma}\label{lem:Ah2} Let $\hat E_h(z)=z^{\alpha-1}(z^\alpha I+\bar A_h)^{-1}$. 
With $\chi\in V_h$, the following estimates hold:
\begin{equation}\label{0}
 \|{\bar A_h\hat{E}_h(z)\chi}\|\leq C_{\theta} |z|^{\alpha(1-p/2)-1} \;\|\bar {A}_h^{p/2}{\chi}\| 
 \quad \forall z\in \Sigma_\theta,\quad 0\leq p\leq 2,
 \end{equation} 
 \begin{equation}\label{1}
|{\hat{E}_h(z)\chi}|_1\leq C_{\theta} |z|^{\alpha/2-1} \|{\chi}\|\quad \forall z\in \Sigma_\theta,
 \end{equation}  
 where $C_{\theta}$ is independent of the mesh size $h$.
\end{lemma}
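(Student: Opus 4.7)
The plan is to mimic the continuous proof of Lemma \ref{lem:Ah}, working in the Hilbert space $(V_h,(\cdot,\cdot)_h)$ rather than $(L^2(\Omega),(\cdot,\cdot))$, since the key structural property, namely that $\bar A_h$ is selfadjoint and positive definite with respect to $(\cdot,\cdot)_h$, has already been recorded in Section \ref{sec: FVM}. First I would verify selfadjointness and positivity explicitly: by the definition of $\bar A_h$ and the symmetry of $a_h(\cdot,\Pi_h^*\cdot)$ noted in the paper, $(\bar A_h\psi,\chi)_h=(\nabla\psi,\nabla\chi)=(\bar A_h\chi,\psi)_h$ and $(\bar A_h\chi,\chi)_h=\|\nabla\chi\|^2\geq 0$, with strict positivity on $V_h$ thanks to the Dirichlet boundary condition. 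Hence $\bar A_h$ admits an orthonormal (in $(\cdot,\cdot)_h$) eigenbasis $\{\phi_j^h\}$ with positive eigenvalues $\{\mu_j^h\}$.

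Next I would derive the basic resolvent estimate. For $z\in\Sigma_\theta$ with $\theta\in(\pi/2,\pi)$, since $\mu_j^h>0$ and $z^\alpha\in\Sigma_{\alpha\theta}\subset\Sigma_\theta$, a standard geometric argument gives $|z^\alpha+\mu_j^h|\geq \sin(\pi-\theta)(|z|^\alpha+\mu_j^h)$. This yields
\begin{equation*}
\|(z^\alpha I+\bar A_h)^{-1}\chi\|_h\leq M_\theta|z|^{-\alpha}\|\chi\|_h,\qquad z\in\Sigma_\theta,
\end{equation*}
which, combined with the $h$-uniform equivalence of $\|\cdot\|_h$ and $\|\cdot\|$ on $V_h$, transfers to the $L^2$-norm up to a harmless constant absorbed into $C_\theta$.

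For the first estimate \eqref{0} at the endpoints $p=0$ and $p=2$ I would argue directly: writing $\bar A_h(z^\alpha I+\bar A_h)^{-1}=I-z^\alpha(z^\alpha I+\bar A_h)^{-1}$ handles $p=0$, while commuting $\bar A_h$ past the resolvent and then applying the resolvent bound to $\bar A_h\chi$ handles $p=2$. The intermediate range $0<p<2$ then follows by interpolation, or equivalently by a one-line spectral calculation: expanding $\chi=\sum_j c_j^h\phi_j^h$ and using the elementary inequality
\begin{equation*}
\frac{(\mu_j^h)^{1-p/2}}{|z|^\alpha+\mu_j^h}\leq C_\theta|z|^{-\alpha p/2},\qquad 0\leq p\leq 2,
\end{equation*}
which is obtained by splitting $\mu_j^h/(|z|^\alpha+\mu_j^h)\leq 1$ and $1/(|z|^\alpha+\mu_j^h)\leq|z|^{-\alpha}$ and interpolating. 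Multiplying by the $z^{\alpha-1}$ factor in $\hat E_h(z)$ produces the power $|z|^{\alpha(1-p/2)-1}$ claimed.

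For \eqref{1} I would compute with the same eigenexpansion, using $\|\nabla\psi\|^2=(\bar A_h\psi,\psi)_h=\sum_j \mu_j^h|(\psi,\phi_j^h)_h|^2$. This gives
\begin{equation*}
|\hat E_h(z)\chi|_1^2=|z|^{2(\alpha-1)}\sum_j\frac{\mu_j^h}{|z^\alpha+\mu_j^h|^2}|(\chi,\phi_j^h)_h|^2,
\end{equation*}
and the AM--GM inequality $|z|^\alpha+\mu_j^h\geq 2|z|^{\alpha/2}(\mu_j^h)^{1/2}$ together with the bound on $|z^\alpha+\mu_j^h|$ yields $\mu_j^h/|z^\alpha+\mu_j^h|^2\leq C_\theta|z|^{-\alpha}$, producing the desired factor $|z|^{\alpha/2-1}$ after again passing from $\|\cdot\|_h$ to $\|\cdot\|$. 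The only mildly delicate step is the uniform-in-$h$ resolvent inequality $|z^\alpha+\mu_j^h|\geq c_\theta(|z|^\alpha+\mu_j^h)$; everything else is routine manipulation, so I expect the proof to be short once that geometric fact is stated.
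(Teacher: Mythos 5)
Your argument is correct and is essentially the standard spectral/resolvent computation that the paper itself omits, deferring instead to the analogous lemmas in \cite{CLT-2013,JLZ2013}: selfadjointness and positivity of $\bar A_h$ in $(\cdot,\cdot)_h$, the sector inequality $|z^\alpha+\mu_j^h|\geq c_\theta\bigl(|z|^\alpha+\mu_j^h\bigr)$, the identity $\|\nabla\psi\|^2=(\bar A_h\psi,\psi)_h$, and the uniform equivalence of $\|\cdot\|_h$ and $\|\cdot\|$ on $V_h$ are exactly the ingredients needed. The only nitpick is the constant in the sector inequality: with $\theta\in(\pi/2,\pi)$ the sum version holds with $c_\theta=\tfrac12\sin(\pi-\theta)$ (or the sharp $\sin((\pi-\theta)/2)$), not $\sin(\pi-\theta)$ itself, but this is absorbed into $C_\theta$ and does not affect the result.
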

Moreover, an analogous of Lemma \ref{lem:Ah2} holds for $\hat F_h(z)=z^{\alpha-1}(z^\alpha I+A_h)^{-1}$, when 
we replace ${\hat{E}}_h(z)$ in Lemma \ref{lem:Ah2} by $\hat F_h(z).$


\section {Error analysis}\label{sec:error}
\se
This section deals with {\it a priori} optimal error estimates for the semidiscrete FVE scheme \eqref{semi-FV} with initial data $v\in \dot{H}^q(\Omega)$, $q=0,1,2$. To do so, 
we first introduce the quadrature error $Q_h:V_h\rightarrow V_h$ defined by
\begin{equation}\label{m8}
(\nabla Q_h\chi,\nabla\psi)=\epsilon_h(\chi,\psi):=(\chi,\psi)_h-(\chi,\psi)\quad \forall \psi \in V_h.
\end{equation}
The operator $Q_h$, introduced in \cite{CLT-2012} for the lumped mass FE element, represents the 
quadrature error in a special way. It satisfies the following error estimates, see \cite{CLT-2012,CLT-2013}.
\begin{lemma}\label{lem:Qh}
Let $Q_h$ be defined by \eqref{m8}. Then, there holds
\begin{equation}\label{QQ}
\|\nabla Q_h\chi\|+h\|\bar{A}_hQ_h\chi\|\leq Ch^{p+1}\|\nabla^{p}\chi\|\quad \forall \chi\in V_h, \quad p=0,1.
\end{equation}
\end{lemma}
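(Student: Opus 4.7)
The plan is to derive the two bounds from a single element-wise quadrature identity, then upgrade from the gradient estimate to the $\bar A_h$-estimate by a duality argument combined with an inverse inequality. Throughout, I will use only the definition \eqref{m8}, the equivalence of $(\cdot,\cdot)_h^{1/2}$ with the $L^2(\Omega)$-norm on $V_h$, and the standard inverse inequality $\|\nabla v\|\le Ch^{-1}\|v\|$ for $v\in V_h$.

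The first step is to establish the core quadrature estimate
\[
|\epsilon_h(\chi,\psi)|\le Ch^{p+1}\,\|\nabla^{p}\chi\|\,\|\nabla\psi\|,\qquad \chi,\psi\in V_h,\ p=0,1.
\]
This is local: writing $\epsilon_h(\chi,\psi)=\sum_K\bigl((\chi,\psi)_{h,K}-(\chi,\psi)_K\bigr)$, the $P_1$-based FVE mass rule is exact for constants in each factor, so a Bramble--Hilbert-type argument on a reference element, combined with the regularity of $\cT_h$, yields $|(\chi,\psi)_{h,K}-(\chi,\psi)_K|\le Ch_K^2\,\|\nabla\chi\|_{L^2(K)}\|\nabla\psi\|_{L^2(K)}$, which is the $p=1$ case. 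For $p=0$, one exploits that the rule is also exact when one factor is linear and the other constant on $K$, producing an extra factor of $h$ after replacing $\nabla\chi$ by $\chi$ via an inverse estimate applied only locally in an appropriate way. Summing over $K$ and applying Cauchy--Schwarz gives the stated global bound.

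With this in hand, the gradient part of \eqref{QQ} is immediate: set $\psi=Q_h\chi$ in \eqref{m8} to obtain
\[
\|\nabla Q_h\chi\|^{2}=\epsilon_h(\chi,Q_h\chi)\le Ch^{p+1}\|\nabla^{p}\chi\|\,\|\nabla Q_h\chi\|,
\]
so $\|\nabla Q_h\chi\|\le Ch^{p+1}\|\nabla^{p}\chi\|$. For the discrete-Laplacian part, the definition of $\bar A_h$ gives, for any $v\in V_h$,
\[
(\bar A_h Q_h\chi,v)_h=(\nabla Q_h\chi,\nabla v)=\epsilon_h(\chi,v).
\]
Using the norm equivalence $\|w\|\simeq\|w\|_h$ on $V_h$, the quadrature estimate, and the inverse inequality $\|\nabla v\|\le Ch^{-1}\|v\|$, I can bound
\[
\|\bar A_h Q_h\chi\|\le C\sup_{v\in V_h\setminus\{0\}}\frac{|\epsilon_h(\chi,v)|}{\|v\|}\le C\sup_{v}\frac{h^{p+1}\|\nabla^{p}\chi\|\,\|\nabla v\|}{\|v\|}\le Ch^{p}\|\nabla^{p}\chi\|,
\]
which multiplied by $h$ yields the second half of \eqref{QQ}.

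The main obstacle is the quadrature bound itself. Unlike the mass-lumping case of \cite{CLT-2012}, here $(\cdot,\cdot)_h=(\cdot,\Pi_h^{*}\cdot)$ involves the dual control-volume interpolation, so the local error $(\chi,\psi)_{h,K}-(\chi,\psi)_K$ must be analyzed on each triangle $K$ partitioned by the barycenter into three sub-quadrilaterals belonging to different control volumes. The clean polynomial-exactness properties that drive the proof have to be verified carefully in this geometry; once available, the Bramble--Hilbert step and the summation are routine. Having rewritten $\epsilon_h$ via $(\nabla Q_h\chi,\nabla\psi)$ to encapsulate the quadrature error as a discrete gradient, the remaining arguments are purely functional-analytic and follow the template of \cite{CLT-2013}.
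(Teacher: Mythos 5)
The paper does not actually prove this lemma; it cites \cite{CLT-2012,CLT-2013}, and your argument reconstructs essentially the route taken there: a local quadrature estimate for $\epsilon_h$ driven by the exactness of the barycentric dual-mesh rule (equivalently, $\int_K(\Pi_h^*\psi-\psi)\,dx=0$ on each triangle $K$), followed by testing \eqref{m8} with $\psi=Q_h\chi$ for the gradient bound and a duality step for the $\bar A_h$ bound. The first half of \eqref{QQ} is correct as you present it.

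The genuine problem is in your duality step for $\|\bar A_hQ_h\chi\|$: you invoke the \emph{global} inverse inequality $\|\nabla v\|\le Ch^{-1}\|v\|$ on $V_h$. With a single $h$ equal to the maximal diameter, that inequality requires quasi-uniformity of $\mathcal{T}_h$, whereas the lemma is stated, and is used in Theorems \ref{thm:H2} and \ref{thm:L2}, for a merely shape-regular family; the paper is careful elsewhere (Remark \ref{r2}, Lemma \ref{thm:Linfty-1}, Theorem \ref{thm:Linfty}) to flag precisely the places where quasi-uniformity is an additional hypothesis. As written, your proof therefore establishes the second half of \eqref{QQ} only under an assumption the lemma does not make. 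The repair is easy and is what the cited references do: prove the quadrature estimate in the two-parameter form $|\epsilon_h(\chi,\psi)|\le Ch^{p+q}\|\nabla^p\chi\|\,\|\nabla^q\psi\|$ for $p,q=0,1$; the $q=0$ case follows directly from $\|\Pi_h^*\psi-\psi\|_{L^2(K)}\le Ch_K\|\nabla\psi\|_{L^2(K)}$ together with the symmetry of $\epsilon_h$ (or from applying the inverse estimate \emph{elementwise} with $h_K$ before summing, which is legitimate on shape-regular meshes). Then
$$
\|\bar A_hQ_h\chi\|\le C\sup_{v\in V_h\setminus\{0\}}\frac{|\epsilon_h(\chi,v)|}{\|v\|_h}\le Ch^{p}\|\nabla^{p}\chi\|
$$
with no global inverse inequality. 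The same remark applies to your sketch of the $p=0$ case of the core estimate, where ``an inverse estimate applied only locally'' is indeed the right idea but must be carried out with $h_K$ on each element; in fact that case needs no exactness at all, since Cauchy--Schwarz and the local interpolation error for $\Pi_h^*$ already give $|\epsilon_h(\chi,\psi)|\le Ch\|\chi\|\,\|\nabla\psi\|$.
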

Note that, by Lemma  \ref{lem:Qh}, and without additional assumptions on the mesh, the following estimate
holds:
$$\|Q_h\chi\| \leq C \|\nabla Q_h\chi\| \leq Ch \|\chi\|\quad \forall \chi\in V_h.$$
This estimate cannot be improved in general, see \cite{CLT-2012,CLT-2013} for some counter examples. 
However, on some special meshes, one can derive a better approximation.
For instance, if the mesh is symmetric (see \cite{CLT-2012,CLT-2013} for the definition and examples),  the operator $Q_h$ is shown to satisfy
\begin{equation}\label{sym}
\|Q_h\chi\|\leq Ch^2 \|\chi\| \quad \forall \chi\in V_h.
\end{equation}

To derive optimal error estimates for the FVE solution $\bar u_h$, we split the error 
$\bar{e}(t):=\bar{u}_h(t)-u(t)$ into 
$\bar{e}(t):=(u_h(t)-u(t))+\xi(t)$, where $\xi(t)=\bar{u}_h(t)-u_h(t)$ and $u_h$ being the standard Galerkin FE solution.
Then, from the definitions of $u_h(t)$, $\bar{u}_h(t)$ and $Q_h$, $\xi(t)$ satisfies
\begin{equation}\label{m10}
\xi_t(t)+\partial_t^{1-\alpha}\bar{A}_h\xi(t)=-\bar{A}_hQ_hu_{ht}(t),\quad  t>0,\quad  \xi(0)=0.
\end{equation}
%

\subsection {Error estimates for smooth initial data}\label{sec:H2}
In the following theorem,  optimal error estimates 
are derived for  smooth initial data $v\in \dot{H}^q(\Omega)$ with $q\in [1,2].$
\begin{theorem} \label{thm:H2} 
Let $u$ and $\bar{u}_h$  be the solutions of $\eqref{main}$ and $\eqref{semi-FV}$, respectively, 
with $v\in \dot{H}^q(\Omega)$ for $q\in [1,2]$ and $v_h=R_h v$. Then, there is a positive 
constant $C$, independent of
$h,$ such that 
\begin{equation}\label{e1}
\|\bar{u}_h(t)-u(t)\|+h\|\nabla(\bar{u}_h(t)-u(t))\|\leq C\;t^{-\alpha(2-q)/2}\;h^2|v|_q,\qquad  t>0.
\end{equation}
\end{theorem}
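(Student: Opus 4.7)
The approach is to split the error as
\begin{equation*}
\bar u_h(t) - u(t) \;=\; \bigl(u_h(t) - u(t)\bigr) + \xi(t), \qquad \xi := \bar u_h - u_h,
\end{equation*}
where $u_h$ denotes the standard semidiscrete Galerkin FE solution of \eqref{semi-FE} started from the same data $v_h = R_h v$. The first summand is already controlled by the Galerkin FE estimates \eqref{FE-1}--\eqref{FE-2}, which, as recalled in the introduction, remain valid with $v_h = R_h v$ for $q\in[1,2]$. It therefore suffices to establish the companion bound $\|\xi(t)\| + h\|\nabla\xi(t)\| \le C\,t^{-\alpha(2-q)/2}h^2|v|_q$, and this I would do by a Laplace transform argument based on the error equation \eqref{m10} combined with Lemmas \ref{lem:Ah2} and \ref{lem:Qh}.

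Since $\xi(0)=0$, taking Laplace transforms in \eqref{m10} yields $\hat\xi(z) = -\hat E_h(z)\,\bar A_h Q_h\, \widehat{u_{ht}}(z)$, where $\hat E_h(z) = z^{\alpha-1}(z^\alpha I + \bar A_h)^{-1}$. From $u_{ht} = -\Ba A_h u_h$ one obtains the clean identity $\widehat{u_{ht}}(z) = -A_h(z^\alpha I + A_h)^{-1} v_h$. I would then write $A_h^{3/2}(z^\alpha I+A_h)^{-1} v_h = A_h^{(3-q)/2}(z^\alpha I+A_h)^{-1}\, A_h^{q/2}v_h$ with $(3-q)/2\in[1/2,1]$; the standard resolvent bound together with $\|A_h^{q/2} R_h v\| \le C|v|_q$ for $q\in[1,2]$ (which follows by interpolation from $A_h R_h v = P_h A v$ at $q=2$ and $\|\nabla R_h v\| \le \|\nabla v\|$ at $q=1$) gives the key Laplace-side bound
\begin{equation*}
\|\nabla\widehat{u_{ht}}(z)\| \le C|z|^{-\alpha(q-1)/2}|v|_q.
\end{equation*}

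For the $L^2$ part of the estimate, apply Lemma \ref{lem:Ah2} with $p=1$ (recalling $\|\bar A_h^{1/2}\chi\|\sim\|\nabla\chi\|$ on $V_h$), which gives $\|\bar A_h\hat E_h(z)\chi\| \le C|z|^{\alpha/2-1}\|\nabla\chi\|$, together with the $p=1$ case of Lemma \ref{lem:Qh}, $\|\nabla Q_h\chi\| \le Ch^2\|\nabla\chi\|$. Since $\bar A_h$ and $\hat E_h(z)$ commute, this yields
\begin{equation*}
\|\hat\xi(z)\| \le Ch^2|z|^{\alpha/2-1}\|\nabla\widehat{u_{ht}}(z)\| \le Ch^2|z|^{\alpha(2-q)/2-1}|v|_q.
\end{equation*}
Inverting the Laplace transform on the deformed contour $\Gamma_{\theta,1/t}$, with the change of variable $\rho = s/t$ on the two rays and a direct estimate on the small arc, gives $\|\xi(t)\| \le Ch^2 t^{-\alpha(2-q)/2}|v|_q$. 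For the gradient part I would rewrite
\begin{equation*}
\bar A_h^{1/2}\hat\xi(z) = -\bar A_h\hat E_h(z)\,\bar A_h^{1/2}Q_h\widehat{u_{ht}}(z),
\end{equation*}
apply Lemma \ref{lem:Ah2} with $p=1$ and the complementary bound $\|\bar A_h Q_h\chi\| \le Ch\|\nabla\chi\|$ coming from the $p=1$ case of Lemma \ref{lem:Qh}, to obtain $\|\bar A_h^{1/2}\hat\xi(z)\| \le Ch|z|^{\alpha(2-q)/2-1}|v|_q$; the identical Laplace inversion then delivers $\|\nabla\xi(t)\| \le Ch\,t^{-\alpha(2-q)/2}|v|_q$.

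The main technical obstacle is the \emph{balancing} of the exponent $p$ inside Lemmas \ref{lem:Ah2} and \ref{lem:Qh}: the intermediate choice $p=1$ in both is precisely what simultaneously delivers the sharp spatial order $h^2$ (respectively $h$ for the gradient) and the sharp temporal singularity $t^{-\alpha(2-q)/2}$. Taking $p=0$ in Lemma \ref{lem:Qh} costs one power of $h$, while taking $p=2$ in Lemma \ref{lem:Ah2} costs a factor $|z|^{-\alpha/2}$ that translates into the wrong power $t^{-\alpha(3-q)/2}$ after inversion; only the combined $p=1$ choice aligns the spatial and temporal scaling and yields the optimal bound.
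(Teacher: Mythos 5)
Your proposal is correct and follows essentially the same route as the paper: the same splitting $\bar u_h-u=(u_h-u)+\xi$ with $\xi=\bar u_h-u_h$, the same Laplace-transform representation of $\xi$ via \eqref{m10}, and the same balanced $p=1$ use of Lemmas \ref{lem:Ah2} and \ref{lem:Qh} together with the identities $\widehat{u_{ht}}(z)=-A_h(z^\alpha I+A_h)^{-1}v_h$ and $A_hR_h=P_hA$. The only cosmetic difference is that you handle all $q\in[1,2]$ in a single pass by interpolating the data bound $\|A_h^{q/2}R_hv\|\le C|v|_q$ and the resolvent estimate, whereas the paper proves the endpoint cases $q=1$ and $q=2$ separately and interpolates the final error estimates.
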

\begin{proof}
Since the estimates for $u_h-u$ are given in (\ref{FE-1}) and (\ref{FE-2}), it is sufficient to show  
\begin{equation}\label{0e2}
\|\xi(t)\|+h\|\nabla\xi(t)\|\leq C \;t^{-\alpha(2-q)/2}\; h^2 |v|_q,\;\; q\in [1,2].
 \end{equation}
 By taking Laplace transforms in \eqref{m10} and following the analysis in Section \ref{sec:notation}, we represent $\xi(t)$  by
\begin{equation}\label{m013}
\xi(t)=-\frac{1}{2\pi i}\int_{\Gamma} e^{zt}\hat{E}_h(z)\bar{A}_hQ_h\widehat{u_{ht}}(z)\,dz.
\end{equation} 
Here and also throughout this article, $\Gamma$ is the particular contour chosen as 
$\Gamma = \Gamma_{\theta,\delta}$ with $\delta=1/t$. From \eqref{m013}, it follows that
\begin{eqnarray} \label{xi-estimate-1}
\|\xi(t)\| + h \|\nabla \xi(t)\| \leq \frac{1}{2\pi}\int_{\Gamma} |e^{zt}| 
\Big(\|\hat{E}_h(z)\bar{A}_hQ_h\widehat{u_{ht}}(z)\| + h \|\nabla \hat{E}_h(z)\bar{A}_hQ_h\widehat{u_{ht}}(z)\| 
\Big) \,|dz|.
\end{eqnarray}
To complete the proof of the estimate, we need to compute the terms under the integral sign on the right 
of side of \eqref{xi-estimate-1}. Now, we discuss two cases for $q=2$ and $q=1$ separately.

When $q=2$, that is, $v\in \dot{H}^2(\Omega),$ apply (\ref{0}) with $p=1$ and (\ref{1}) in Lemma \ref{lem:Ah2} to obtain
\begin{equation}\label{m014}
\|\hat{E}_h(z)\bar{A}_hQ_h\widehat{u_{ht}}(z)\|\leq C |z|^{\alpha/2-1}\|\nabla Q_h \widehat{u_{ht}}(z)\|,
\end{equation}
and
\begin{equation}\label{g2}
\|\nabla \hat{E}_h(z)\bar{A}_hQ_h\widehat{u_{ht}}(z)\|\leq C |z|^{\alpha/2-1}\|\bar A_hQ_h\widehat{u_{ht}}(z)\|. 
\end{equation}
Then, by (\ref{QQ}), it follows that 
\begin{equation}\label{Q1aa}
\|\hat{E}_h(z)\bar{A}_hQ_h\widehat{u_{ht}}(z)\|+h\|\nabla \hat{E}_h(z)\bar{A}_hQ_h\widehat{u_{ht}}(z)\|\leq Ch^2 
|z|^{\alpha/2-1}\|\nabla \widehat{u_{ht}}(z)\|.
\end{equation}
Since 
$$\widehat{u_{ht}}(z)=-z^{1-\alpha}A_h\hat{{u}}_{h}(z)= -z^{1-\alpha}A_h\hat{F}_{h}(z)v_h,$$ 
an  estimate analogous to (\ref{1}) yields
\begin{equation}\label{Q1}
\|\nabla \widehat{u_{ht}}(z)\|=|z|^{1-\alpha}\|\nabla \hat{F}_{h}(z)A_hv_h\|\leq C |z|^{1-\alpha}|\,|z|^{\alpha/2-1}
\|A_h v_h\|\leq C|z|^{-\alpha/2} \|A_h v_h\|.
\end{equation}
On substitution of (\ref{Q1aa}) and (\ref{Q1}) in \eqref{xi-estimate-1}, we use (\ref{m013}) to obtain
\begin{eqnarray}\label{xi-estimate-2}
\|\xi(t)\|+h\|\nabla\xi(t)\|&\leq & Ch^2 \left(\int_{\Gamma} |e^{zt}|\, |z|^{-1}\,|dz|\right)\|A_h v_h\| \nonumber \\
&\leq & Ch^2  \left(\int_{1/t}^\infty e^{\rho t\cos\theta}\rho^{-1}d\rho
+\int_{-\theta}^{\theta}e^{\cos\psi}d\psi\right)\|A_h v_h\| \nonumber \\
&\leq & C h^2 \|A_hv_h\|.
\end{eqnarray}
Now,  by the identity ${A}_hR_h=P_hA$, we have
$$
\|A_h R_h v\| = \|P_hA v\| \leq \|A v\|=|v|_2,
$$
which shows the estimate \eqref{0e2} for $q=2.$

For the case $q=1$, that is, $v\in \dot{H}^1(\Omega),$
consider (\ref{Q1aa}) and 
 the identity 
 $$\widehat{u_{ht}}(z)=z\hat{u}_{h}(z)-v_h$$ 
 to obtain using (\ref{m3})
\begin{equation}\label{m0156}
\|\nabla \hat{u}_{ht}(z)\|=\|\nabla(z\hat{F}_{h}(z)v_h-v_h)\|\leq (M+1) \|\nabla v_h\|.
\end{equation}
From the estimate \eqref{xi-estimate-1}, using \eqref{Q1aa} and \eqref{m0156} 
with  $\|\nabla v_h\|=\|\nabla R_h v\|\leq\|\nabla v\|,$ we deduce that 
\begin{eqnarray*}
\|\xi(t)\|+h\|\nabla\xi(t)\|&\leq & Ch^2 \left(\int_{\Gamma} |e^{zt} |z|^{\alpha/2-1}\,|dz|\right)|v|_1  \\
&\leq & Ch^2  \left(\int_{1/t}^\infty e^{\rho t\cos\theta}\rho^{\alpha/2-1}d\rho+\int_{-\theta}^{\theta}e^{\cos\psi}t^{-\alpha/2}d\psi\right)|v|_1 \\
&\leq & Ct^{-\alpha/2}h^2 |v|_1.
\end{eqnarray*}
This completes the proof for the case $q=1.$

Since estimates for $q=1$ and $q=2$ are known, then  interpolation technique provides
result for $q\in [1,2].$ 
This concludes the rest of the proof. \end{proof}

\begin{remark}\label{r2}  
Note that the estimate  \eqref{e1}  in Theorem \ref{thm:H2} remains valid when $v_h= P_h v$. Indeed, for $q=2$,
let $\tilde{u}_h$ denote the solution of \eqref{semi-FV} with 
$v_h=P_h v.$  Then $\zeta:=\tilde{u}_h-\bar{u}_h$ satisfies 
\begin{equation*}\label{e7}
\zeta_t+\partial_t^{1-\alpha}\bar{A}_h\zeta=0,\quad  t>0,\quad  \zeta(0)=P_h v-R_h v.
\end{equation*}
Since 
$$\zeta(t)=-\frac{1}{2 \pi i} \int_{\Gamma} e^{zt}{\hat{E}}_h(z)(P_h v-R_h v) \; dz,$$
we deduce
$$
\|\zeta(t)\|\leq C\;\| P_h v-R_h v\| \int_{\Gamma} |e^{zt}|\; |z|^{-1} |d\;z|\leq C h^2 | v|_2.
$$
Thus, the estimate \eqref{e1} with $q=2$ follows  by the triangle inequality.  If the inverse 
inequality $\|\nabla \chi\|\leq Ch^{-1}\|\chi\|$ holds, which is the case if the mesh is quasi-uniform,  
then the estimate in the  gradient norm follows directly for  $v_h=P_h v$.
 
If the $L^2(\Omega)$-projection operator $P_h$ is stable in $\dot H^1(\Omega)$, i.e., 
$\|\nabla P_hw\|\leq C |w|_1$, then the estimate \eqref{e1} holds for the case 
$q=1$ and the choice   $v_h=P_hv$. A sufficient condition for such stability of 
$P_h$ is the quasi-uniformity of the mesh. 
Now, by interpolation the  estimate \eqref{e1} holds for $q\in [1,2]$  and $v_h=P_hv$.
\end{remark}


\subsection {Error estimates for nonsmooth initial data}\label{sec:L2}
In this subsection, we establish optimal error estimates for the semidiscrete FVE scheme (\ref{semi-FV}) for nonsmooth 
initial data $v\in L^2(\Omega)$.
\begin{theorem} \label{thm:L2}
Let $u$ and $\bar{u}_h$ be the solution of $(\ref{main})$ and $(\ref{semi-FV})$, respectively, with 
$v\in L^2(\Omega)$ and $v_h =P_hv$. Then, there exists a positive constant $C$, independent of $h,$ such that
\begin{equation}\label{m12a}
\|\bar{u}_h(t)-u(t)\| + \|\nabla(\bar{u}_h(t)-u(t))\|\leq Cht^{-\alpha}\|v\|, \quad t>0.
\end{equation}
Furthermore, if the quadrature error operator $Q_h$ satisfies \eqref{sym}, then the following optimal 
error estimate holds:
\begin{equation}\label{m12b}
\|\bar{u}_h(t)-u(t)\| \leq Ch^2t^{-\alpha}\|v\|, \quad t>0.
\end{equation}
\end{theorem}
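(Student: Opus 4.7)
The plan is to split the error as $\bar u_h(t)-u(t)=(u_h(t)-u(t))+\xi(t)$ with $\xi=\bar u_h-u_h$, and to observe that the standard Galerkin part is already handled by (\ref{FE-1})--(\ref{FE-2}) at $q=0$ with $v_h=P_hv$: on a quasi-uniform mesh this gives $\|u_h-u\|\leq Ch^2t^{-\alpha}\|v\|$ and $\|\nabla(u_h-u)\|\leq Cht^{-\alpha}\|v\|$, both already within the bounds claimed in the theorem. The whole task therefore reduces to bounding $\xi(t)$.

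Since $\xi$ satisfies (\ref{m10}) with zero initial datum, Laplace transform yields the representation (\ref{m013}) with contour $\Gamma=\Gamma_{\theta,1/t}$. A preparatory fact, used repeatedly, is the uniform bound $\|\widehat{u_{ht}}(z)\|\leq C\|v\|$ for $z\in\Sigma_\theta$; it follows from $\widehat{u_{ht}}(z)=[z^\alpha(z^\alpha I+A_h)^{-1}-I]v_h$ together with the analogue of (\ref{res1}) for $A_h$. For the $L^2$-norm of $\xi$, I would exploit the commutativity $\hat E_h(z)\bar A_h=\bar A_h\hat E_h(z)$ and apply (\ref{0}) with $p=0$ to obtain $\|\hat E_h(z)\bar A_hQ_h\widehat{u_{ht}}\|\leq C|z|^{\alpha-1}\|Q_h\widehat{u_{ht}}\|$; Poincar\'e's inequality combined with Lemma \ref{lem:Qh} at $p=0$ then gives $\|Q_h\widehat{u_{ht}}\|\leq Ch\|v\|$, and the standard contour estimate $\int_\Gamma|e^{zt}||z|^{\alpha-1}|dz|\leq Ct^{-\alpha}$ yields $\|\xi(t)\|\leq Cht^{-\alpha}\|v\|$.

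The delicate step, which I expect to be the main obstacle, is the $H^1$-bound for $\xi$. A direct application of (\ref{1}) reduces matters to $\|\bar A_hQ_h\widehat{u_{ht}}\|$, which Lemma \ref{lem:Qh} at $p=0$ only bounds by $C\|\widehat{u_{ht}}\|$ (no $h$ factor), leading to the insufficient estimate $Ct^{-\alpha/2}\|v\|$ that cannot be absorbed into $Cht^{-\alpha}\|v\|$ uniformly in $h$ and $t$. To recover the missing $h$, I would use the norm equivalence $\|\nabla\chi\|\sim\|\bar A_h^{1/2}\chi\|$ on $V_h$ together with the commutativity of $\bar A_h^{1/2}$ with both $\hat E_h(z)$ and $\bar A_h$, rewriting
\[
\|\nabla\hat E_h(z)\bar A_hQ_h\widehat{u_{ht}}\|\sim\|\bar A_h\hat E_h(z)w\|,\qquad w:=\bar A_h^{1/2}Q_h\widehat{u_{ht}}.
\]
Since $\|w\|\sim\|\nabla Q_h\widehat{u_{ht}}\|\leq Ch\|\widehat{u_{ht}}\|\leq Ch\|v\|$ by Lemma \ref{lem:Qh} at $p=0$, a second application of (\ref{0}) with $p=0$ yields $\|\bar A_h\hat E_h(z)w\|\leq Ch|z|^{\alpha-1}\|v\|$, and the same contour integration delivers $\|\nabla\xi(t)\|\leq Cht^{-\alpha}\|v\|$. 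Adding the two bounds for $\xi$ to the Galerkin error then completes the first estimate.

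For the second claim, the only modification is that under the symmetric-mesh hypothesis (\ref{sym}) the bound $\|Q_h\widehat{u_{ht}}\|\leq Ch\|v\|$ used in the $L^2$ step is upgraded to $\|Q_h\widehat{u_{ht}}\|\leq Ch^2\|v\|$, which directly produces $\|\hat E_h(z)\bar A_hQ_h\widehat{u_{ht}}\|\leq Ch^2|z|^{\alpha-1}\|v\|$ and hence $\|\xi(t)\|\leq Ch^2t^{-\alpha}\|v\|$; combining with $\|u_h-u\|\leq Ch^2t^{-\alpha}\|v\|$ from (\ref{FE-1}) yields the optimal $L^2$-rate.
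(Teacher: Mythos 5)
Your proposal is correct and follows essentially the same route as the paper: the same splitting via $\xi$, the representation \eqref{m013}, the bound \eqref{0} with $p=0$ combined with $\|Q_h\chi\|\leq C\|\nabla Q_h\chi\|\leq Ch\|\chi\|$ (upgraded via \eqref{sym} for the optimal rate), and the same commutation of $\bar A_h^{1/2}$ to reduce the gradient bound to $C|z|^{\alpha-1}\|\nabla Q_h\widehat{u_{ht}}\|$, which is exactly the inequality the paper states. The only superfluous ingredient is your invocation of quasi-uniformity for the Galerkin part: by the paper's discussion, \eqref{FE-1} and \eqref{FE-2} at $q=0$ with $v_h=P_hv$ hold without that assumption, consistent with the theorem not requiring it.
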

\begin{proof} As before, it is sufficient to prove estimates for $\xi$.
We first apply  (\ref{0}) with $p=0$ to arrive at
 $$\|\hat{E}_h(z)\bar{A}_hQ_h\widehat{u_{ht}}\|\leq C|z|^{\alpha-1}\|Q_h\widehat{u_{ht}}\|.$$    
Then, the following bound follows from the integral representation  (\ref{m013}):
\begin{equation}\label{0m13}
\|\xi(t)\|\leq C\int_{\Gamma} |e^{zt}||z|^{\alpha-1}\|Q_h\widehat{u_{ht}}(z)\|\,|dz|.
\end{equation} 
To estimate the gradient of $\xi$,  we note that 
$$\|\nabla \hat{E}_h(z)\bar{A}_hQ_h\widehat{u_{ht}}\|\leq C|z|^{\alpha-1}\|\nabla Q_h\widehat{u_{ht}} \|,$$
and hence,
\begin{equation}\label{0m14}
\|\nabla\xi(t)\|\leq C\int_{\Gamma} |e^{zt}||z|^{\alpha-1}\|\nabla Q_h\widehat{u_{ht}}(z)\|\,|dz|.
\end{equation}
Note that $\|Q_h\widehat{u_{ht}}\|\leq Ch \|\widehat{u_{ht}} \|$  holds on a general mesh, and 
 $\|\nabla Q_h\widehat{u_{ht}}\|\leq Ch \|\widehat{u_{ht}} \|$ by \eqref{QQ}. 
Since $\|\widehat{u_{ht}}(z)\|=\|z\hat{F}_{h}(z)v_h-v_h\|\leq C \|v_h\|$ by (\ref{m3}), a substitution into 
\eqref{0m13}  and \eqref{0m14} yields the first estimate \eqref{m12a}. 
Finally, if (\ref{sym}) holds, then  \eqref{m12b}  follows immediately from \eqref{0m13}, 
which completes the proof.
\end{proof}



\subsection {$L^\infty(\Omega)$-error estimates}\label{sec:Linfty}
In the following, we obtain a superconvergence result for the gradient of $\xi$ in the $L^2(\Omega)$-norm.
As a consequence, assuming $v\in L^{\infty}(\Omega)$ and  the quasi-uniformity on the mesh, a quasi-optimal error estimate 
in the stronger  $L^\infty(\Omega)$-norm is derived for the semidiscrete FVE solution $\bar u_h$. 
We first prove the following Lemma by refining some of the  estimates derived in the proofs of 
Theorem \ref{thm:H2}. 
\begin{lemma} \label{thm:Linfty-1}
For $1\leq q\leq 2$, and  with  $v_h =R_hv$, there is a positive constant $C,$ independent of $h,$
such that 
\begin{equation*}\label{12a}
 \|\nabla \xi(t)\|\leq Ch^2t^{-\alpha(3-q)/2}|v|_q, \quad t>0.
\end{equation*}
The estimate is still valid for $v_h =P_hv,$ but with quasi-uniform assumption on the mesh.
\end{lemma}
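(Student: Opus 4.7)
The plan is to retrace the Laplace-transform argument used in the proof of Theorem \ref{thm:H2}, but to sharpen the core estimate of $\|\nabla \hat E_h(z)\bar A_h Q_h\widehat{u_{ht}}(z)\|$ by taking the value $p=0$, rather than $p=1$, in Lemma \ref{lem:Ah2}. Since $\bar A_h^{1/2}$ commutes with $\hat E_h(z)$ on $V_h$ (both being functions of $\bar A_h$) and $\|\nabla \chi\|$ is equivalent to $\|\bar A_h^{1/2}\chi\|$ for $\chi\in V_h$ (uniformly in $h$), starting from the representation
$$
\xi(t)= -\frac{1}{2\pi i}\int_\Gamma e^{zt}\hat E_h(z)\bar A_h Q_h\widehat{u_{ht}}(z)\,dz,\qquad \Gamma=\Gamma_{\theta,1/t},
$$
I would write
$$
\|\nabla \hat E_h(z)\bar A_h Q_h\widehat{u_{ht}}(z)\|\leq C\,\bigl\|\bar A_h\hat E_h(z)\bigl(\bar A_h^{1/2}Q_h\widehat{u_{ht}}(z)\bigr)\bigr\|.
$$
Applying \eqref{0} with $p=0$ (i.e.\ $\|\bar A_h\hat E_h(z)\chi\|\leq C|z|^{\alpha-1}\|\chi\|$) gives
$$
\|\nabla\hat E_h(z)\bar A_h Q_h\widehat{u_{ht}}(z)\|\leq C|z|^{\alpha-1}\|\nabla Q_h\widehat{u_{ht}}(z)\|\leq Ch^2 |z|^{\alpha-1}\|\nabla\widehat{u_{ht}}(z)\|,
$$
where the last step invokes \eqref{QQ} with $p=1$.

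To close the argument, I would reuse the bounds on $\|\nabla\widehat{u_{ht}}(z)\|$ already established in the proof of Theorem \ref{thm:H2}: for $v_h=R_h v$ and $q=2$, $\|\nabla\widehat{u_{ht}}(z)\|\leq C|z|^{-\alpha/2}\|A_h R_h v\|\leq C|z|^{-\alpha/2}|v|_2$; for $q=1$, $\|\nabla\widehat{u_{ht}}(z)\|\leq C\|\nabla R_h v\|\leq C|v|_1$. Substituting into the key estimate and integrating along $\Gamma$ with the standard bound $\int_\Gamma |e^{zt}||z|^{\beta-1}|dz|\leq C t^{-\beta}$, applied respectively with $\beta=\alpha/2$ and $\beta=\alpha$, yields the claimed rates at the endpoints $q=1,2$. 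The intermediate range $q\in(1,2)$ follows by interpolation on the Hilbert scale $\{\dot H^q\}$. For $v_h=P_h v$ the same argument goes through verbatim, provided one verifies $\|\nabla P_h v\|\leq C|v|_1$ and $\|A_h P_h v\|\leq C|v|_2$; both hold on quasi-uniform meshes, the latter by writing $A_h P_h v=A_h R_h v+A_h(P_h-R_h)v=P_hAv+A_h(P_h-R_h)v$ and using the inverse inequality together with the Aubin--Nitsche bound $\|(P_h-R_h)v\|\leq Ch^2|v|_2$.

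The principal subtlety is recognizing the correct choice of $p$ in Lemma \ref{lem:Ah2}. The value $p=1$ used in Theorem \ref{thm:H2} keeps the sharper $|z|^{\alpha/2-1}$ decay but extracts only one factor of $h$ from $\|\bar A_h Q_h\chi\|\leq Ch\|\nabla\chi\|$, giving the sub-optimal $\mathcal O(h)$ gradient bound. Shifting one half-power of $\bar A_h$ inward onto $Q_h$ and taking $p=0$ instead trades this for a weaker $|z|^{\alpha-1}$ decay, while securing the full $h^2$ from the improved estimate $\|\nabla Q_h\chi\|\leq Ch^2\|\nabla\chi\|$; the worse $|z|$ behavior is precisely what produces the extra $t^{-\alpha/2}$ singularity appearing in the superconvergence rate relative to the $L^2$ bound of Theorem \ref{thm:H2}.
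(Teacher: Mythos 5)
Your proposal is correct and follows essentially the same route as the paper: the paper likewise replaces the $p=1$ estimate \eqref{g2} by the $p=0$ version of \eqref{0} to get $\|\nabla\hat E_h(z)\bar A_hQ_h\widehat{u_{ht}}(z)\|\leq C|z|^{\alpha-1}\|\nabla Q_h\widehat{u_{ht}}(z)\|\leq Ch^2|z|^{\alpha-1}\|\nabla\widehat{u_{ht}}(z)\|$, then reuses \eqref{Q1} and \eqref{m0156} for the endpoints $q=2$ and $q=1$, integrates along $\Gamma$, and interpolates. Your added justification for the $v_h=P_hv$ case under quasi-uniformity is a reasonable filling-in of a detail the paper leaves implicit.
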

\begin{proof} 
By using  bounds \eqref{0} and \eqref{QQ}, we   obtain instead of \eqref{g2} the following estimate
$$
\|\nabla \hat{E}_h(z)\bar{A}_hQ_h\widehat{u_{ht}}(z)\|\leq C |z|^{\alpha-1}\|\nabla Q_h\widehat{u_{ht}}(z)\|
\leq  C h^2 |z|^{\alpha-1}\|\nabla \widehat{u_{ht}}(z)\|.
$$
Since $\|\nabla\widehat{u_{ht}}(z)\|\leq c|z|^{-\alpha/2}\|A_hv_h\|$ by \eqref{Q1}, we note  from  
the representation  \eqref{m013} that 
\begin{equation*}\label{0gg2}
\|\nabla\xi(t)\| \leq Ch^2 |v|_2 \int_{\Gamma} |e^{zt} |z|^{\alpha/2-1}\,|dz|\leq  Ct^{-\alpha/2}h^2 |v|_2.
\end{equation*} 
Similarly, taking into account \eqref{m0156}, we obtain
\begin{equation*}\label{0gg2-b}
\|\nabla\xi(t)\| \leq Ch^2 |v|_1 \int_{\Gamma} |e^{zt} |z|^{\alpha-1}\,|dz|\leq  Ct^{-\alpha}h^2 |v|_1.
\end{equation*} 
Now, the desired estimate \eqref{12a} for $q\in [1,2]$ follows by  interpolation which completes  the proof.
\end{proof}

Note that for 2D-problems,  the Sobolev inequality 
$$ \|\chi\|_{L^{\infty}(\Omega)} \leq C\; |\ln h| \; \|\nabla\chi\| \;\;\;\forall \chi\in V_h,$$
and  Lemma 4.2  imply for $q\in [1,2]$ that
\begin{equation}
\label{max-norm}
\|\xi(t)\|_{L^{\infty}(\Omega)} \leq C\,|\ln h| \, \|\nabla\xi(t)\| \leq 
C|\ln h| \,h^2t^{-\alpha(3-q)/2}|v|_q.
\end{equation}

As a consequence, we obtain the following quasi-optimal $L^\infty(\Omega)$-error estimate  
by combining the results in \eqref{max-norm} and \eqref{FE-3}.
\begin{theorem}\label{thm:Linfty}
Let $u$ and $\bar{u}_h$ be the solution of $(\ref{main})$ and $(\ref{semi-FV})$, respectively, with 
$v_h =P_hv$.  Assume that
 $v \in \dot H^q(\Omega)\cap L^\infty(\Omega)$ for  $1\le  q\le 2$.  Then, under the quasi-uniformity condition on the mesh, there holds
\begin{equation*}\label{FV-3}
 \|\bar u_h(t) - u(t)\|_{L^{\infty}(\Omega)} \leq
 C|\ln h|^{\frac{5}{2}} h^2 t^{-\alpha(3-q)/2}\Big(|v|_q+\|v\|_{L^\infty(\Omega)}\Big),\qquad 1\le q\le 2.
\end{equation*}
\end{theorem}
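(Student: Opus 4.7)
The statement is essentially a corollary assembled from two ingredients already laid out, so my plan is to make the splitting explicit and trace carefully how each factor of $|\ln h|$ arises. I would write
\[
\bar u_h(t)-u(t)=\xi(t)+\bigl(u_h(t)-u(t)\bigr),
\]
where $u_h$ is the standard Galerkin FE solution with the same initial approximation $v_h=P_hv$, and $\xi=\bar u_h-u_h$ as in \eqref{m10}. The triangle inequality in $L^\infty(\Omega)$ then reduces the task to bounding the two terms separately.

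For the Galerkin part $u_h-u$, I would directly invoke \eqref{FE-3}, which is stated precisely for the initial approximation $v_h=P_hv$ and for $v\in\dot H^q(\Omega)\cap L^\infty(\Omega)$ with $1\le q\le 2$; this contributes the full $C|\ln h|^{5/2}h^2 t^{-\alpha(3-q)/2}\bigl(|v|_q+\|v\|_{L^\infty(\Omega)}\bigr)$ and is therefore already the dominant term in the final bound. For the FVE-versus-FE difference $\xi$, I would use the inequality \eqref{max-norm}, namely
\[
\|\xi(t)\|_{L^{\infty}(\Omega)}\leq C|\ln h|\,\|\nabla\xi(t)\|\leq C|\ln h|\,h^2\,t^{-\alpha(3-q)/2}|v|_q,
\]
which was obtained by combining Lemma \ref{thm:Linfty-1} with the two-dimensional discrete Sobolev inequality. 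Here I need to justify applying Lemma \ref{thm:Linfty-1} with the choice $v_h=P_hv$: the lemma explicitly states that this substitution is permissible provided the mesh is quasi-uniform, and that hypothesis is in force by assumption.

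Adding the two contributions, the $|\ln h|$ factor from $\xi$ is absorbed into the $|\ln h|^{5/2}$ factor from the Galerkin error, and the $t^{-\alpha(3-q)/2}$ rates match, giving
\[
\|\bar u_h(t)-u(t)\|_{L^{\infty}(\Omega)}\le C|\ln h|^{5/2}h^2 t^{-\alpha(3-q)/2}\bigl(|v|_q+\|v\|_{L^{\infty}(\Omega)}\bigr),
\]
as claimed. The only conceptual point worth flagging is the coherence of the initial-data choice across the two estimates being combined: the Galerkin bound \eqref{FE-3} is stated for $v_h=P_hv$ without additional mesh restrictions, whereas Lemma \ref{thm:Linfty-1} needs quasi-uniformity in order to transfer its conclusion from $v_h=R_hv$ to $v_h=P_hv$. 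Both hypotheses are satisfied here, so no real obstacle arises; the proof is essentially a one-line assembly once the two lemmas are at hand.
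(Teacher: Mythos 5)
Your proposal is correct and follows exactly the paper's route: the paper obtains Theorem \ref{thm:Linfty} precisely by combining the Galerkin estimate \eqref{FE-3} with the bound \eqref{max-norm} on $\xi$ via the triangle inequality, and your remark about needing quasi-uniformity to use Lemma \ref{thm:Linfty-1} with $v_h=P_hv$ matches the hypotheses as stated there. No gaps.
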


\section {Fully discrete schemes}\label{sec:discrete}
\se
In this section, we analyze two fully discrete schemes for the semidiscrete problem \eqref{semi-FV} 
using the framework of  convolution quadrature developed in \cite{LST-1996,Lubich-2006}, which has
been initiated in \cite{Lubich-1986,Lubich-1988}. To describe this framework, we first divide the 
time interval $[0,T]$ into $N$ equal subintervals with a time step size $\tau=T/N$, and let $t_j=j\tau$. 
Then, the convolution quadrature \cite{Lubich-1986} refers to an approximation of any function of 
the form $k\ast\varphi$ as
$$
(k\ast\varphi) (t_n):=\int_0^{t_n}k(t_n-s)\varphi(s)\,ds\approx \sum_{j=0}^n \beta_{n-j}(\tau) \varphi(t_j),
$$
where the convolution weights $\beta_j=\beta_j(\tau)$ are computed from the Laplace transform $\hat{k}(z)$  of 
$k$ rather than the kernel $k(t)$. 
This method provides, in particular, an interesting tool for  approximating the Riemann-Liouville fractional integral  of order $\alpha$, $\partial_t^{-\alpha}\varphi := \omega_\alpha\ast \varphi$,  where 
$\omega_\alpha(t)=t^{\alpha-1}/\Gamma(\alpha)$. Here,  $\hat{k}(z)=\hat{\omega}_\alpha(z)=z^{-\alpha}$.

With $\partial_t$ being time differentiation, we define $\hat{k}(\partial_t)$ as the operator 
of (distributional) convolution with the kernel $k$: $\hat{k}(\partial_t)\varphi=k\ast \varphi$ for a 
function $\varphi(t)$ with suitable smoothness.
A convolution quadrature approximates $\hat{k}(\partial_t)\varphi$ by a discrete convolution 
$\hat{k}(\bar \partial_\tau)\varphi$ at $t=t_n$  as
$$
\hat{k}(\bar \partial_\tau)\varphi(t_n) = \sum_{j=0}^n \beta_{n-j}(\tau) \varphi(t_j),
$$
where the quadrature weights $\{\beta_j(\tau)\}_{j=0}^{\infty}$ are determined by the generating 
power series
$$
\sum_{j=0}^\infty  \beta_j(\tau) \xi^j=\hat{k}(\delta(\xi)/\tau)
$$
with $\delta(\xi)$ being a rational function, chosen as the quotient of the generating polynomials of 
a stable and consistent linear multistep method. In this paper, we consider the Backward Euler (BE) 
and the second-order backward difference (SBD) methods, for which $\delta(\xi)=1-\xi$ and $\delta(\xi)=(1-\xi)+ (1-\xi)^2/2$, respectively.
For the BE method,  the convolution quadrature formula for approximating the fractional integral $\partial_t^{-\alpha}\varphi$ is given by 
$$
\bar \partial_\tau^{-\alpha}\varphi(t_n) = \sum_{j=0}^n  \beta_{n-j} \varphi(t_j), \text{ where } 
\sum_{j=0}^\infty  \beta_{j} \xi^j=[(1-\xi)/\tau]^{-\alpha}, \quad \beta_j=\tau^\alpha(-1)^{j}
\left(\begin{array}{c}
-\alpha\\
j
\end{array}\right),
$$
while for the SBD method, the quadrature weights are provided by the formula \cite{Lubich-1986}:
$$
\beta_j=\tau^\alpha(-1)^{j}\left(\frac{2}{3}\right)^\alpha\sum_{l=0}^j
3^{-l}
\left(\begin{array}{c}-\alpha\\j-l\end{array}\right)
\left(\begin{array}{c}-\alpha\\l\end{array}\right).
$$
An important property of the convolution quadrature is that it maintains some relations of the continuous convolution. 
For instance, the associativity of convolution is valid for the convolution 
quadrature  \cite{Lubich-2004} such as
\begin{equation}\label{s1}
\hat{k}_1(\bar \partial_\tau) \hat{k}_2(\bar \partial_\tau)=\hat{k}_1 \hat{k}_2(\bar \partial_\tau)\quad 
\text{ and }\quad 
\hat{k}_1(\bar \partial_\tau)(k\ast\varphi)=(\hat{k}_1(\bar \partial_\tau)k)\ast\varphi.
\end{equation}


In the following lemma, we state an interesting result on the error of the convolution quadrature, see \cite[Theorem 4.1]{Lubich-1988} and \cite[Theorem 2.2]{Lubich-2004}.
\begin{lemma}\label{lem:Lubich}
Let $G(z)$ be  analytic in the sector $\Sigma_\theta$ and such that
\begin{equation*}\label{s3}
\|G(z)\|\leq M|z|^{-\mu}\quad \forall z\in \Sigma_\theta,
\end{equation*}
for some real $\mu$ and $M$. Assume that the linear multistep method is strongly $A$-stable and of 
order $p\geq 1$. Then, for $\varphi(t)=ct^{\nu-1}$, the convolution quadrature satisfies
\begin{equation}\label{s4}
\|G(\partial_t)\varphi(t) - G(\bar \partial_\tau)\varphi(t)\|  \leq \left\{
\begin{array}{ll}
C t^{\mu-1+\nu-p} \tau^p, & \nu\geq p\\
C t^{\mu-1} \tau^\nu, & 0< \nu\leq p.
\end{array} \right.
\end{equation}
\end{lemma}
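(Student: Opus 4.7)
The plan is to write both $G(\partial_t)\varphi$ and $G(\bar\partial_\tau)\varphi$ as contour integrals involving their respective symbols and then to reduce the error to estimates on the symbol difference $G(z)-G(\delta(e^{-z\tau})/\tau)$ together with the generating function of the values $\varphi(t_j)$.

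First, since $G$ is analytic in $\Sigma_\theta$ with the decay $\|G(z)\|\le M|z|^{-\mu}$, the Laplace inversion formula gives
\begin{equation*}
G(\partial_t)\varphi(t_n)=\frac{1}{2\pi i}\int_\Gamma e^{zt_n}G(z)\hat\varphi(z)\,dz,
\end{equation*}
where $\Gamma=\Gamma_{\theta',1/t_n}$ is a Hankel contour inside $\Sigma_\theta$ and $\hat\varphi(z)=c\,\Gamma(\nu)z^{-\nu}$. On the discrete side, from $\sum_j\beta_j\xi^j=G(\delta(\xi)/\tau)$ and the Cauchy formula for power-series coefficients,
\begin{equation*}
G(\bar\partial_\tau)\varphi(t_n)=\frac{1}{2\pi i}\oint_{|\xi|=\rho}\xi^{-n-1}G(\delta(\xi)/\tau)\,\Phi(\xi)\,d\xi,
\end{equation*}
with $\Phi(\xi)=\sum_{j\ge 0}\varphi(t_j)\xi^j$. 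The substitution $\xi=e^{-z\tau}$ converts this circle into a vertical segment of length $2\pi/\tau$ in the $z$-plane; I would then use strong $A$-stability to keep $\delta(e^{-z\tau})/\tau$ inside $\Sigma_\theta$ and deform that segment onto a Hankel contour $\Gamma_\tau$ matching $\Gamma$ up to the truncation circle $|z|\tau\sim 1$.

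Second, I would compare the two symbols near $z=0$. Consistency of order $p$ of the multistep method yields $\delta(e^{-z\tau})/\tau=z+O(z^{p+1}\tau^p)$, so that
\begin{equation*}
\bigl\|G(z)-G(\delta(e^{-z\tau})/\tau)\bigr\|\le C|z|^{-\mu+p}\tau^p\quad\text{for }|z|\tau\ll 1,
\end{equation*}
while on the complementary arc each symbol is individually dominated by $|z|^{-\mu}$ thanks to the decay hypothesis. A parallel estimate for the ``discrete Laplace transform'' $\tilde\Phi(z):=\tau\sum_j\varphi(t_j)e^{-jz\tau}$ gives, for $\varphi(t)=ct^{\nu-1}$, an error $|\tilde\Phi(z)-\hat\varphi(z)|\le C|z|^{-\nu}(|z|\tau)^{\min(\nu,p)}$, the $\min(\nu,p)$ reflecting that the singular behaviour of $\varphi$ at $0$ limits the attainable order.

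Third, I would assemble the global error as a single contour integral on $\Gamma$, where the integrand is controlled by $|e^{zt_n}|\,|z|^{-\mu-\nu}(|z|\tau)^{\min(p,\nu)}\,|dz|$. Parametrising by $|z|=\rho$ on the rays and $|z|=1/t_n$ on the circular arc and integrating in $\rho$ produces the two regimes claimed in \eqref{s4}: when $\nu\ge p$ the factor $(|z|\tau)^p$ combined with $|z|^{-\mu-\nu}$ integrates to $t_n^{\mu-1+\nu-p}\tau^p$, whereas for $0<\nu\le p$ the factor $(|z|\tau)^\nu$ gives $t_n^{\mu-1}\tau^\nu$. The hardest step will be the transition zone $|z|\tau\asymp 1$, where the consistency expansion is no longer quantitative: here strong $A$-stability is indispensable, as it guarantees that $\delta(e^{-z\tau})/\tau$ stays uniformly inside $\Sigma_\theta$ away from the negative real axis, so that $\|G(\delta(e^{-z\tau})/\tau)\|$ remains bounded by $|\delta(e^{-z\tau})/\tau|^{-\mu}$ and the decay of $|e^{zt_n}|$ on the arc absorbs the boundary contribution into the same leading order rather than producing a larger error term.
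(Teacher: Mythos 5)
The paper does not prove this lemma at all: it is quoted verbatim from the literature, with explicit pointers to Theorem~4.1 of Lubich (1988) and Theorem~2.2 of Lubich (2004), so there is no in-paper argument to compare yours against. Your sketch is in fact a reasonable outline of the proof strategy used in those cited sources: contour representation of $G(\partial_t)\varphi$ via the Laplace transform, Cauchy's formula plus the substitution $\xi=e^{-z\tau}$ on the discrete side, strong $A$-stability to keep $\delta(e^{-z\tau})/\tau$ in a sector where the resolvent-type bound on $G$ applies, and the order-$p$ consistency expansion $\delta(e^{-z\tau})/\tau=z+O(z^{p+1}\tau^p)$ to control the symbol difference near $z=0$. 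The arithmetic at the end is also right: an integrand of size $|e^{zt_n}|\,|z|^{-\mu-\nu}(|z|\tau)^{\min(p,\nu)}$ on $\Gamma_{\theta,1/t_n}$ does integrate to the two regimes in \eqref{s4}. Two points would need real work before this is a proof. First, the bound $|\tilde\Phi(z)-\hat\varphi(z)|\le C|z|^{-\nu}(|z|\tau)^{\min(\nu,p)}$ is asserted, not derived; it is precisely where the dichotomy $\nu\ge p$ versus $\nu\le p$ originates (for $\nu\ge p$ one needs Euler--Maclaurin-type cancellations using $\varphi^{(k)}(0)=0$, for $\nu<1$ one must even make sense of the term $\varphi(t_0)$), so it cannot be waved through. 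Second, for general real $\mu$ the contour integrals you write down need not converge absolutely when $\mu+\nu\le 1$, and one has to regularize (e.g.\ replace $G$ by $z^{-k}G$ and differentiate $k$ times in $t$) before deforming contours. Neither issue is a wrong turn, but both are the technical core of Lubich's theorem rather than routine details.
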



\subsection{Error analysis for the BE method}
In this subsection,  we specify the construction of a fully discrete scheme based on the BE method for the 
semidiscrete problem \eqref{semi-FV}.  Then, we derive $L^2(\Omega)$-error estimates for smooth 
and nonsmooth initial data. 

After integrating in time from $0$ to $t$, the semidiscrete scheme \eqref{FVP} takes the form
\begin{equation}\label{s5}
\bar{u}_h+\partial_t^{-\alpha} \bar A_h\bar{u}_h=v_h.
\end{equation}
The second term on the left-hand side is a convolution, and then, it  can be approximated 
at $t_n=n\tau$ with $U_h^n$ by
\begin{equation}\label{s6}
U_h^n+\bar\partial_\tau^{-\alpha} \bar A_h U_h^n=v_h.
\end{equation}
The symbol $\bar\partial_\tau^{-\alpha}$ refers to the relevant convolution quadrature generated 
by the BE method. 

Thus, with $U_h^0=v_h$, the fully discrete solution can be represented by
\begin{equation}\label{BE}
U_h^n=\left(I+\beta_0\bar A_h\right)^{-1} \left(U_h^0-\sum_{j=0}^{n-1}\beta_{n-j}\bar A_h U^j\right) \quad \mbox{ for } n\geq 1.
\end{equation}
We notice that the term corresponding to $j=0$ in the formula can be omitted without affecting the convergence rate of the scheme \cite{LST-1996}.


In view of \eqref{s5} and \eqref{s6}, we can write the  error  $U^n_h-\bar{u}_h(t_n)$ at $t=t_n$ as 
\begin{equation*}\label{s7}
U^n_h-\bar{u}_h(t_n)= \left( G(\bar\partial_\tau)- G(\partial_t)\right)v_h,
\end{equation*}
where $G(z)=(I+z^{-\alpha}\bar A_h)^{-1}$. Using the identity 
$$(I+z^{-\alpha}\bar A_h)^{-1} = I-(z^{\alpha}I+\bar A_h)^{-1}\bar A_h,$$
and denoting $\bar G(z)=-(z^{\alpha}I+\bar A_h)^{-1}$, the error  can be represented as
\begin{equation}\label{s8}
U^n_h-\bar{u}_h(t_n)= \left( \bar G(\bar\partial_\tau)- \bar G(\partial_t)\right) \bar A_hv_h.
\end{equation}
Using  Lemma \ref{lem:Lubich}, we now derive the following error estimates.
%
\begin{lemma}\label{lem:BE}
Let $\bar{u}_h$ and $U^n_h$ be the solutions of problems $(\ref{semi-FV})$ and $(\ref{s6})$, respectively, with 
$U^0_h=v_h$. Then, the following estimates hold:

(a) If $v\in \dot{H}^2(\Omega)$ and $v_h =R_hv$, then
\begin{equation}\label{s10}
\|U^n_h-\bar{u}_h(t_n)\|\leq C \tau t_n^{\alpha-1}|v|_2.
\end{equation}

(b) If $v\in L^2(\Omega)$ and $v_h =P_hv$, then
\begin{equation}\label{s9}
\|U^n_h-\bar{u}_h(t_n)\|\leq C \tau t_n^{-1}\|v\|.
\end{equation}

\end{lemma}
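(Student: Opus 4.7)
The plan is to begin from the error representation (5.8),
\[
U_h^n-\bar u_h(t_n)=\bigl(\bar G(\bar\partial_\tau)-\bar G(\partial_t)\bigr)\bar A_h v_h,\qquad \bar G(z)=-(z^\alpha I+\bar A_h)^{-1},
\]
and interpret $\bar A_h v_h$ as the constant function of time fed into the convolution quadrature, which corresponds to $\varphi(t)=ct^{\nu-1}$ with $\nu=1$ and $c=\bar A_h v_h$. Since the backward Euler method is strongly $A$-stable of order $p=1$, Lemma 5.2 applied with these parameters (so that $\nu=p$) yields a bound of the shape $Ct^{\mu-1}\tau\,\|\bar A_h v_h\|$, where $\mu$ is the decay exponent of $\|\bar G(z)\|$ on the sector $\Sigma_\theta$. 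An analogous treatment, using a different symbol, will cover case (b).

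For part (a), I would first observe that the self-adjointness and positive-definiteness of $\bar A_h$ with respect to $(\cdot,\cdot)_h$, combined with the uniform norm equivalence between $(\cdot,\cdot)_h^{1/2}$ and $\|\cdot\|$ on $V_h$, give the resolvent bound $\|(z^\alpha I+\bar A_h)^{-1}\|\le C|z|^{-\alpha}$ for $z\in\Sigma_\theta$; this is exactly $\mu=\alpha$. The remaining ingredient is the bound $\|\bar A_h R_h v\|\le C|v|_2$. I would prove it by testing: for every $\chi\in V_h$,
\[
(\bar A_h R_h v,\chi)_h=(\nabla R_h v,\nabla\chi)=(\nabla v,\nabla\chi)=(Av,\chi),
\]
using the Ritz projection property. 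Choosing $\chi=\bar A_h R_h v$ and invoking the norm equivalence on $V_h$ produces $\|\bar A_h R_h v\|\le C\|Av\|=C|v|_2$, which combined with Lemma 5.2 gives the desired estimate (5.10).

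For part (b), the quantity $\bar A_h P_h v$ is not controlled by $\|v\|$, so (5.8) must be reshaped. Using the algebraic identity $(I+z^{-\alpha}\bar A_h)^{-1}=z^\alpha(z^\alpha I+\bar A_h)^{-1}$, I would rewrite the error as
\[
U_h^n-\bar u_h(t_n)=\bigl(G(\bar\partial_\tau)-G(\partial_t)\bigr)v_h,\qquad G(z)=z^\alpha(z^\alpha I+\bar A_h)^{-1}.
\]
The resolvent bound now delivers the uniform estimate $\|G(z)\|\le C$, i.e.\ $\mu=0$. Applying Lemma 5.2 with $\varphi(t)=v_h$, $\nu=1$, $p=1$, yields $\|U_h^n-\bar u_h(t_n)\|\le C\tau t_n^{-1}\|v_h\|$, and the $L^2$-stability of $P_h$ finishes (5.9).

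The main obstacle I anticipate is in part (a): unlike the standard Galerkin case, where $A_h R_h=P_h A$ makes $\|A_h R_h v\|\le|v|_2$ immediate, the FVE operator $\bar A_h$ does not satisfy such an exact identity, and one must route the argument through the discrete inner product $(\cdot,\cdot)_h$ and its equivalence with $\|\cdot\|$ on $V_h$. The correct choice of symbol ($\bar G$ for smooth data, $G$ for nonsmooth data) is what lets Lemma 5.2 produce the sharp rates $t_n^{\alpha-1}$ and $t_n^{-1}$, respectively.
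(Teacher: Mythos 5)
Your proposal is correct and follows essentially the same route as the paper: the symbol $\bar G(z)=-(z^{\alpha}I+\bar A_h)^{-1}$ with $\mu=\alpha$ for smooth data, $G(z)=(I+z^{-\alpha}\bar A_h)^{-1}$ with $\mu=0$ for nonsmooth data, Lemma 5.2 with $\nu=p=1$, and the $L^2(\Omega)$-stability of $P_h$. The only cosmetic difference is in bounding $\|\bar A_h R_h v\|$: the paper does have an exact identity, namely $\bar A_hR_h=\bar P_hA$ with $\bar P_h$ the projection relative to $(\cdot,\cdot)_h$, whereas you obtain the same bound by testing with $\chi=\bar A_hR_hv$ and invoking the equivalence of $(\cdot,\cdot)_h^{1/2}$ with $\|\cdot\|$ on $V_h$ --- the two computations are identical in substance.
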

\begin{proof} For the  estimate \eqref{s10}, we recall that, by \eqref{res1},  
$\|\bar G(z)\|\leq M_\theta |z|^{-\alpha}\;  \forall z\in \Sigma_\theta.$ An application of Lemma \ref{lem:Lubich}  (with $\mu=\alpha$, $\nu=1$ and $p=1$)   to \eqref{s8}  yields
$$
\|U^n_h-\bar{u}_h(t_n)\|\leq C \tau t_n^{\alpha-1}\|\bar A_hv_h\|.
$$
Now, we introduce a projection operator 
$\bar P_h:L^2(\Omega)\rightarrow V_h$ defined by
$$
(\bar P_hw,\chi)_h=(w,\chi) \quad \forall \chi\in  V_h.
$$
Then,  $\bar P_h$ is  stable in $L^2(\Omega)$ and  the identity $\bar A_hR_h=\bar P_hA$ holds, since
$$
(\bar A_h R_h w,\chi)_h=(\nabla R_h w,\nabla \chi)=(\nabla w, \nabla\chi)=(A w,\chi)=(\bar P_h A w,\chi)_h \quad \forall \chi \in V_h.
$$
As $v_h=R_h v$, it follows that 
$$
\|\bar A_h v_h\| =\|\bar A_h R_h v\| = \|\bar P_h A v\| \leq C \|A v\|=C |v|_2,
$$
which shows \eqref{s10}.

For the estimate \eqref{s9}, we notice that
$ \|{G}(z)\|=|z|^\alpha\|(z^{\alpha}I+\bar A_h)^{-1}\|\leq M_\theta\;  \forall z\in \Sigma_\theta.$
Then, by applying Lemma \ref{lem:Lubich} (with $\mu=0$, $\nu=1$ and $p=1$) to \eqref{s7}, we obtain 
$$
\|U^n_h-\bar{u}_h(t_n)\|\leq C \tau t_n^{-1}\|v_h\|.
$$
Now, the estimate  follows from  the $L^2(\Omega)$-stability of $P_h$. 
This completes the rest of the  proof.
\end{proof}
\begin{remark}\label{rem:BE} For $v\in \dot{H}^2(\Omega)$, we can choose  $v_h =P_hv$. Let $\tilde U_h^n$ be the  solution of the fully discrete scheme \eqref{s6} with $v_h =P_hv$. Then, by the stability of the scheme, a direct consequence of Lemma \ref{lem:BE}, we have
$\|U^n_h-\tilde U^n_h\|\leq \|R_hv-P_hv\|\leq  C h^2|v|_2,$ showing that
\begin{equation}\label{s11}
\|U^n_h-\bar{u}_h(t_n)\|\leq C (\tau t_n^{\alpha-1}+h^2)|v|_2.
\end{equation}
Hence, by interpolating \eqref{s9} and \eqref{s11} it follows that for  $v_h =P_hv$,
\begin{equation}\label{s11a}
\|U^n_h-\bar{u}_h(t_n)\|\leq C (\tau t_n^{-1})^{1/2} (\tau t_n^{\alpha-1}+h^2)^{1/2}|v|_1.
\end{equation}
\end{remark} 

As a consequence of Lemma \ref{lem:BE}, we obtain error estimates for the fully discrete scheme \eqref{BE} with smooth and nonsmooth initial data. 

\begin{theorem}\label{thm:BE}
Let $u$ and $U^n_h$ be the solutions of problems $(\ref{main})$ and $(\ref{s6})$, respectively, with 
$U^0_h=v_h$. Then, the following error estimates hold:

(a) If $v\in \dot{H}^2(\Omega)$ and $v_h =R_hv$, then
\begin{equation}\label{s12}
\|U^n_h-u(t_n)\|\leq C (h^2+\tau t_n^{\alpha-1})|v|_2.
\end{equation}

(b) If $v\in \dot{H}^1(\Omega)$, $v_h =P_hv$ and the mesh is quasi-uniform, then
\begin{equation}\label{s13}
\|U^n_h-u(t_n)\|\leq C (h^2t_n^{-\alpha/2}+\tau t_n^{-1+\alpha/2})|v|_1.
\end{equation}

(c) If $v\in L^2(\Omega)$, $v_h =P_hv$ and $Q_h$ satisfies \eqref{sym}, then
\begin{equation}\label{s14}
\|U^n_h-u(t_n)\|\leq  C (h^2t_n^{-\alpha}+\tau t_n^{-1})\|v\|.
\end{equation}
\end{theorem}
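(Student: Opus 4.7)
\medskip

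\noindent\textbf{Proof plan for Theorem \ref{thm:BE}.} The natural approach is the classical splitting
\[
\|U_h^n-u(t_n)\|\;\le\; \|U_h^n-\bar u_h(t_n)\| \;+\; \|\bar u_h(t_n)-u(t_n)\|,
\]
bounding the first (time-discretization) term by Lemma \ref{lem:BE} / Remark \ref{rem:BE} and the second (spatial) term by Theorem \ref{thm:H2} or Theorem \ref{thm:L2}. The three cases then reduce to choosing the right pair and combining them cleanly.

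For case (a), with $v\in\dot H^2(\Omega)$ and $v_h=R_hv$, I would invoke Lemma \ref{lem:BE}(a) to control $\|U_h^n-\bar u_h(t_n)\|$ by $C\tau t_n^{\alpha-1}|v|_2$, and Theorem \ref{thm:H2} with $q=2$ to control $\|\bar u_h(t_n)-u(t_n)\|$ by $Ch^2|v|_2$. Adding these yields \eqref{s12} at once. Case (c) is equally direct: Lemma \ref{lem:BE}(b) gives $\|U_h^n-\bar u_h(t_n)\|\le C\tau t_n^{-1}\|v\|$, while Theorem \ref{thm:L2} under the symmetry hypothesis \eqref{sym} gives $\|\bar u_h(t_n)-u(t_n)\|\le Ch^2 t_n^{-\alpha}\|v\|$, and the two combine to \eqref{s14}.

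Case (b) is the delicate one. For the spatial part I would use Theorem \ref{thm:H2} with $q=1$; since $v_h=P_hv$ rather than $R_hv$, I need to appeal to Remark \ref{r2}, where quasi-uniformity of the mesh is precisely what restores $\|\bar u_h(t)-u(t)\|\le Ch^2 t_n^{-\alpha/2}|v|_1$. For the time-discretization part I would use the interpolated bound \eqref{s11a} from Remark \ref{rem:BE}, namely
\[
\|U_h^n-\bar u_h(t_n)\|\;\le\; C(\tau t_n^{-1})^{1/2}(\tau t_n^{\alpha-1}+h^2)^{1/2}|v|_1 .
\]
Using $(a+b)^{1/2}\le a^{1/2}+b^{1/2}$, the right-hand side splits into the clean term $\tau t_n^{-1+\alpha/2}|v|_1$ and a mixed term $h\tau^{1/2}t_n^{-1/2}|v|_1$.

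The main obstacle is the mixed term $h\tau^{1/2}t_n^{-1/2}|v|_1$, which does not match the target form $h^2 t_n^{-\alpha/2}+\tau t_n^{-1+\alpha/2}$. I would absorb it with a simple AM--GM step,
\[
h\tau^{1/2}t_n^{-1/2} \;=\; \bigl(h^2 t_n^{-\alpha/2}\bigr)^{1/2}\bigl(\tau t_n^{-1+\alpha/2}\bigr)^{1/2}\;\le\; \tfrac{1}{2}\bigl(h^2 t_n^{-\alpha/2}+\tau t_n^{-1+\alpha/2}\bigr),
\]
which dumps the cross term back into the two standard terms and yields \eqref{s13}. Aside from this AM--GM trick and the bookkeeping needed to apply Remark \ref{r2} under quasi-uniformity, every step is mechanical: the heavy lifting has already been done in Lemma \ref{lem:BE}, Remark \ref{rem:BE}, and Theorems \ref{thm:H2}--\ref{thm:L2}.
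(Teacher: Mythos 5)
Your proposal is correct and follows essentially the same route as the paper: cases (a) and (c) by the triangle inequality combining Lemma \ref{lem:BE} with Theorems \ref{thm:H2} and \ref{thm:L2}, and case (b) by combining the $q=1$ spatial estimate (valid for $v_h=P_hv$ under quasi-uniformity) with the interpolated bound \eqref{s11a}. Your AM--GM absorption of the cross term $h\tau^{1/2}t_n^{-1/2}$ is exactly the paper's observation that the square of this term equals the product of the other two.
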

\begin{proof} The first estimate \eqref{s12} follows from \eqref{e1}, \eqref{s10} and the triangle inequality, while 
the third estimate \eqref{s14}  follows from \eqref{m12b}  and \eqref{s9}. By combining \eqref{e1} (with $q=1$) which holds for $v_h =P_hv$ and \eqref{s11a}, we deduce
\begin{equation*}\label{s15}
\|U^n_h-u(t_n)\|\leq C (h^2t_n^{-\alpha/2}+\tau t_n^{-1+\alpha/2}+\tau^{1/2}t_n^{-1/2}h)|v|_1.
\end{equation*}
An inspection of the three terms between brackets shows that the  square of the third term  equals 
the product of the first two terms, which proves the estimate  \eqref{s13}. This concludes the proof.
\end{proof}


\subsection{Error analysis for the SBD method}
Now we consider the time discretization of  $(\ref{semi-FV})$ constructed with the convolution 
quadrature based on the second-order backward difference formula. 
From Lemma \ref{lem:Lubich}, it is obvious that one can get only a first-order error bound if,  
for instance, $\varphi$ is constant (i.e., $\nu=1$). In order to overcome this difficulty, a correction 
of the scheme is needed. Below,  we present  modifications of the convolution quadrature  based on 
the strategy in \cite{LST-1996} and \cite{Lubich-2006}.
By noting the identity
\begin{equation*}\label{k5}
 (I+\partial_t^{-\alpha} \bar A_h)^{-1}=I-(I+\partial_t^{-\alpha} \bar A_h)^{-1}\partial_t^{-\alpha} \bar A_h,
\end{equation*}
it turns out from \eqref{s5} that the semidiscrete solution $\bar{u}_h$ can be rewritten as
\begin{equation*}\label{k5}
\bar{u}_h=v_h-(I+\partial_t^{-\alpha} \bar A_h)^{-1}\partial_t^{-\alpha} \bar A_h v_h.
\end{equation*}
This leads to the modified convolution quadrature \cite{Lubich-2006}
\begin{equation}\label{k6}
U_h^n= v_h-(I+\bar\partial_\tau^{-\alpha} \bar A_h)^{-1}\partial_t^{-\alpha} \bar A_h v_h,
\end{equation}
where the exact contribution $\partial_t^{-\alpha} A_h v_h = \omega_{\alpha+1}(t)A_h v_h$ 
is kept in the new formula (\ref{k6})  in order to improve the time accuracy. The symbol $\bar\partial_\tau^{-\alpha}$ refers to the  convolution quadrature generated by the SBD method. Unfortunately, this correction would not yield  optimal time accuracy. 
A second choice for the  modified convolution quadrature which will be considered here is based on  
the approximation \cite{LST-1996}
\begin{equation}\label{k6n}
U_h^n= v_h-(I+\bar\partial_\tau^{-\alpha} \bar A_h)^{-1}\bar \partial_\tau^{1-\alpha} \partial_t^{-1}\bar A_h v_h,
\end{equation}
where  the term $\partial_t^{-1}$ is kept to achieve second-order time accuracy. The advantages of both  numerical methods  \eqref{k6} and \eqref{k6n} are described in \cite{Lubich-2006}. 

For the  numerical implementation, it is essential to write \eqref{k6n} as a time stepping algorithm. 
Let $1_\tau=(0,3/2,1,\cdots)$ so that $1_\tau=\bar\partial_\tau\partial_t^{-1}1$ at grid point $t_n$.
Then by applying the operator $(I+\bar\partial_\tau^{-\alpha} \bar A_h)$ to both sides of \eqref{k6n} and using the associativity of convolution in \eqref{s1}, we arrive at  the equivalent form  
\begin{equation*}\label{k6n2}
(I+\bar\partial_\tau^{-\alpha}\bar A_h)(U_h^n-v_h)=-\bar\partial_\tau^{-\alpha} \bar A_h 1_\tau v_h.
\end{equation*}
By applying again the operator $\bar\partial_\tau$, we obtain 
\begin{equation}\label{SBD}
\bar\partial_\tau(U_h^n-v_h)+\bar\partial_\tau^{1-\alpha} \bar A_h(U_h^n-v_h)=-\bar\partial_\tau^{1-\alpha}\bar A_h 1_\tau v_h.
\end{equation}
By noting that $1v_h- 1_\tau v_h=(v_h,-1/2v_h,0,\cdots)$, we thus define the time stepping scheme as: with $U^0_h=v_h$, find $U_h^n$ such that 
$$
\frac{3}{2}\tau^{-1}(U_h^1-U_h^0)+\tilde\partial_\tau^{1-\alpha} \bar A_hU_h^1=0,
$$
and for $n\geq 2$
$$
\bar\partial_\tau U_h^n+\tilde\partial_\tau^{1-\alpha} \bar A_hU_h^n=0,
$$
where the modified convolution quadrature $\tilde\partial_\tau^{1-\alpha}$ is given by \cite{LST-1996}
$$
\tilde\partial_\tau^{1-\alpha}\varphi^n=
\left( \sum_{j=1}^n \beta_{n-j}^{(1-\alpha)}\varphi^j+ \frac{1}{2}\beta_{n-1}^{(1-\alpha)}\varphi^0\right),
$$
with the weights $\{\beta_j^{(1-\alpha)}\}$ being generated by the SBD method.

Now using Lemma \ref{lem:Lubich}, we derive the following error bounds for smooth and nonsmooth initial data.
\begin{lemma}\label{lem:SBD}
Let $\bar{u}_h$ and $U^n_h$ be the solutions of problems $(\ref{semi-FV})$ and $(\ref{SBD})$, respectively, and set 
$U^0_h=v_h$. Then, the following estimates hold:

(a) If $v\in \dot{H}^2(\Omega)$ and $v_h =R_hv$, then
\begin{equation}\label{k10}
\|U^n_h-\bar{u}_h(t_n)\|\leq C \tau^2 t_n^{\alpha-2}|v|_2.
\end{equation}

(b) If $v\in L^2(\Omega)$ and $v_h =P_hv$, then
\begin{equation}\label{k9}
\|U^n_h-\bar{u}_h(t_n)\|\leq C \tau^2 t_n^{-2}\|v\|.
\end{equation}
\end{lemma}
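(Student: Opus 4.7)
The plan is to mimic the BE analysis of Lemma \ref{lem:BE}, but exploiting the specific design of the modified scheme \eqref{k6n} in which the exact integration $\partial_t^{-1}$ is preserved so that the convolution-quadrature error acts on a smoother driving function. The first step is to rewrite the semidiscrete solution in the same functional form as the scheme: using the algebraic identity $(I+B)^{-1}=I-(I+B)^{-1}B$ together with the distributional convolution identity $\partial_t^{-\alpha}=\partial_t^{1-\alpha}\partial_t^{-1}$, one obtains
$$
\bar u_h(t)=v_h - H(\partial_t)\bigl(\partial_t^{-1}\bar A_h v_h\bigr)(t),\qquad H(z):=(I+z^{-\alpha}\bar A_h)^{-1}z^{1-\alpha}=z(z^\alpha I+\bar A_h)^{-1}.
$$
Subtracting \eqref{k6n} yields the clean error representation
$$
U_h^n-\bar u_h(t_n)=\bigl[H(\bar\partial_\tau)-H(\partial_t)\bigr]\bigl(\partial_t^{-1}\bar A_h v_h\bigr)(t_n),
$$
so the proof reduces to applying Lemma \ref{lem:Lubich} with $p=2$ (the SBD method is strongly $A$-stable of order two).

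For part (a), the resolvent estimate on $\bar A_h$ gives $\|H(z)\|\le C|z|^{1-\alpha}$, i.e.\ $\mu=\alpha-1$ in the language of Lemma \ref{lem:Lubich}. The driving function $\partial_t^{-1}\bar A_h v_h=t\,\bar A_h v_h$ has $\nu=2=p$, so the lemma produces the bound $C\tau^2 t_n^{\alpha-2}\|\bar A_h v_h\|$. The identity $\bar A_h R_h=\bar P_h A$ combined with the $L^2$-stability of $\bar P_h$ (both established in the proof of Lemma \ref{lem:BE}) yields $\|\bar A_h v_h\|\le C|v|_2$ for $v_h=R_h v$, which proves \eqref{k10}.

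For part (b) the data is only in $L^2(\Omega)$, so $\|\bar A_h v_h\|$ is not controllable, and the remedy is to push $\bar A_h$ into the operator. Introducing the new symbol $K(z):=H(z)\bar A_h=z(z^\alpha I+\bar A_h)^{-1}\bar A_h$ and using $(z^\alpha I+\bar A_h)^{-1}\bar A_h=I-z^\alpha(z^\alpha I+\bar A_h)^{-1}$ together with the resolvent bound gives $\|K(z)\|\le C|z|$, i.e.\ $\mu=-1$. With the driving function $\partial_t^{-1}v_h=tv_h$ (so $\nu=2=p$ again), Lemma \ref{lem:Lubich} delivers $C\tau^2 t_n^{-2}\|v_h\|$, and the $L^2$-stability of $P_h$ closes \eqref{k9}. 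The only delicate point is this trade-off: absorbing $\bar A_h$ into the operator enlarges the symbol bound from $|z|^{1-\alpha}$ to $|z|$ and degrades the temporal rate from $t_n^{\alpha-2}$ to $t_n^{-2}$, but this is precisely the price of avoiding any smoothness assumption on $v$. Once the two representations above are in place, verifying the convolution-quadrature algebra \eqref{s1} is routine, and both estimates follow mechanically from Lemma \ref{lem:Lubich}.
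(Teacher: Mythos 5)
Your proposal is correct and follows essentially the same route as the paper: the same error representation $U_h^n-\bar u_h(t_n)=\pm\bigl[\bar G(\bar\partial_\tau)-\bar G(\partial_t)\bigr]\partial_t^{-1}\bar A_h v_h$ with $\bar G(z)=z(z^\alpha I+\bar A_h)^{-1}$ for part (a), the same absorption of $\bar A_h$ into the symbol for part (b), the same application of Lemma \ref{lem:Lubich} with $(\mu,\nu,p)=(\alpha-1,2,2)$ and $(-1,2,2)$ respectively, and the same use of $\bar A_hR_h=\bar P_hA$ and the $L^2$-stability of $P_h$ to close the estimates.
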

\begin{proof} For the  estimate \eqref{k10}, we set  
$$\bar G(z)=z^{1-\alpha}(I+z^{-\alpha}\bar A_h)^{-1}$$
and write the error as
\begin{equation}\label{k9b}
U^n_h-\bar{u}_h(t_n)= \left( \bar G(\bar\partial_\tau)- \bar G(\partial_t)\right)\partial_t^{-1} \bar A_h v_h.
\end{equation}
Since $\|\bar G(z)\|\leq M_\theta |z|^{1-\alpha}\;  \forall z\in \Sigma_\theta$  by \eqref{res1},  \eqref{k9b} 
and  Lemma \ref{lem:Lubich}  (with $\mu=\alpha-1$, $\nu=2$ and $p=2$) imply 
$$
\|U^n_h-\bar{u}_h(t_n)\|\leq c\tau^2 t_n^{\alpha-2}\|\bar A_hv_h\|.
$$
Then, the desired estimate \eqref{k10} follows from the identity $\bar A_hR_h=\bar P_hA$.

For the estimate \eqref{k9}, we note  
with 
$$\bar{G}(z)=z^{1-\alpha}(I+z^{-\alpha}\bar{A}_h)^{-1}\bar A_h$$ 
and using \eqref{k6n} that
\begin{equation}\label{k9a}
U^n_h-\bar{u}_h(t_n)= \left( \bar {G}(\bar\partial_\tau)- \bar{G}(\partial_t)\right)\partial_t^{-1} v_h.
\end{equation}
Since $\|\bar{G}(z)\|\leq M_\theta |z|\;  \forall z\in \Sigma_\theta$, a use of 
\eqref{k9a}, Lemma \ref{lem:Lubich} (with $\mu=-1$, $\nu=2$ and $p=2$) and the $L^2(\Omega)$ stability 
of $P_h$ yield the estimate \eqref{k9}.
This completes the rest of the proof
\end{proof}
\begin{remark} By the stability of the scheme, a direct consequence of Lemma \ref{lem:SBD}, and the arguments 
in Remark \ref{rem:BE}, the following error estimate holds for  $v_h= P_h v$
\begin{equation}\label{k11}
\|U^n_h-\bar{u}_h(t_n)\|\leq  C (\tau^2 t_n^{\alpha-2}+h^2)|v|_2.
\end{equation}
Then,  by interpolation of  \eqref{k9} and \eqref{k11} we get for  $v_h =P_hv$ 
\begin{equation*}\label{k11a}
\|U^n_h-\bar{u}_h(t_n)\|\leq C (\tau^2 t_n^{-2})^{1/2} (\tau t_n^{\alpha-2}+h^2)^{1/2}|v|_1.
\end{equation*}
\end{remark} 

Using the estimates derived in Sections \ref{sec:H2} and \ref{sec:L2} for the semidiscrete problem, and  following the arguments  in the proof of Theorem \ref{thm:BE}, we can now state  the error estimates for the fully discrete scheme \eqref{SBD} with smooth and nonsmooth initial data. 

\begin{theorem}\label{thm:SBD}
Let $u$ and $U^n_h$ be the solutions of problems $(\ref{main})$ and $(\ref{SBD})$, respectively, with 
$U^0_h=v_h$. Then, the following error estimates hold:

(a) If $v\in \dot{H}^2(\Omega)$ and $v_h =R_hv$, then
\begin{equation*}\label{k12}
\|U^n_h-u(t_n)\|\leq C (h^2+\tau^2 t_n^{\alpha-2})|v|_2.
\end{equation*}

(b) If $v\in \dot{H}^1(\Omega)$, $v_h =P_hv$ and the mesh is quasi-uniform, then
\begin{equation*}\label{k13}
\|U^n_h-u(t_n)\|\leq  C (h^2t_n^{-\alpha/2}+\tau^2 t_n^{\alpha/2-2})|v|_1.
\end{equation*}

(c) If $v\in L^2(\Omega)$, $v_h =P_hv$ and $Q_h$ satisfies \eqref{sym}, then
\begin{equation*}\label{k14}
\|U^n_h-u(t_n)\|\leq C (h^2t_n^{-\alpha}+\tau^2 t_n^{-2})\|v\|.
\end{equation*}\end{theorem}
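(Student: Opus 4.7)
The plan is to mirror the argument used for Theorem \ref{thm:BE}, replacing the Backward Euler time discretization bounds by the corresponding second-order bounds from Lemma \ref{lem:SBD} and the subsequent Remark, and then combining them with the spatial error estimates from Theorems \ref{thm:H2} and \ref{thm:L2} via the triangle inequality
\[
\|U_h^n - u(t_n)\| \le \|U_h^n - \bar{u}_h(t_n)\| + \|\bar{u}_h(t_n) - u(t_n)\|.
\]

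For part (a), with $v \in \dot{H}^2(\Omega)$ and $v_h = R_h v$, I would apply Theorem \ref{thm:H2} (with $q=2$) to bound the spatial error by $C h^2 |v|_2$, and then invoke Lemma \ref{lem:SBD}(a) to bound the time discretization error by $C \tau^2 t_n^{\alpha-2}|v|_2$. Adding the two gives the stated bound immediately. For part (c), with $v \in L^2(\Omega)$ and $v_h = P_h v$ (under the symmetric-mesh assumption so that \eqref{sym} holds), I would combine Theorem \ref{thm:L2}, which yields the spatial bound $C h^2 t_n^{-\alpha}\|v\|$, with Lemma \ref{lem:SBD}(b), which provides the time bound $C \tau^2 t_n^{-2}\|v\|$.

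Part (b) is the one requiring a small additional step, exactly as in the proof of Theorem \ref{thm:BE}(b). The plan is to start from Theorem \ref{thm:H2} with $q=1$ (which, by Remark \ref{r2}, is valid for $v_h = P_h v$ on a quasi-uniform mesh) to get $C h^2 t_n^{-\alpha/2}|v|_1$ for the spatial error, and then use the interpolated time-discretization bound from the Remark following Lemma \ref{lem:SBD},
\[
\|U_h^n - \bar{u}_h(t_n)\| \le C \bigl(\tau^2 t_n^{-2}\bigr)^{1/2}\bigl(\tau^2 t_n^{\alpha-2} + h^2\bigr)^{1/2}|v|_1
\le C\bigl(\tau^2 t_n^{\alpha/2-2} + \tau \, t_n^{-1} h\bigr)|v|_1.
\]
Summing these yields three terms of the form $h^2 t_n^{-\alpha/2} + \tau^2 t_n^{\alpha/2-2} + \tau \, t_n^{-1} h$, and the cross term $\tau t_n^{-1} h$ satisfies the AM--GM type relation $(\tau t_n^{-1} h)^2 = (h^2 t_n^{-\alpha/2})(\tau^2 t_n^{\alpha/2-2})$, so it is bounded by the sum of the other two and can be absorbed, producing the stated estimate.

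No step should present a genuine obstacle here: the time-stepping bounds of Lemma \ref{lem:SBD} and the semidiscrete bounds of Section \ref{sec:error} already do all the work. The only technical point worth being careful about is the $v \in \dot{H}^1$ case, where one must rely both on the stability of $P_h$ in $\dot{H}^1$ under quasi-uniformity (to legitimize the use of Theorem \ref{thm:H2} with $v_h = P_h v$) and on the interpolation trick between the smooth and nonsmooth time-error bounds to handle the cross term cleanly.
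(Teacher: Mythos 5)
Your proposal is correct and follows essentially the same route as the paper, which itself proves Theorem \ref{thm:SBD} by simply invoking the semidiscrete estimates of Sections \ref{sec:H2}--\ref{sec:L2} together with Lemma \ref{lem:SBD} and repeating the argument of Theorem \ref{thm:BE}, including the same absorption of the cross term $\tau t_n^{-1}h$ whose square equals the product of the other two terms.
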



\section {On extensions}\label{sec:extensions}
\se
In this section, we discuss the extension of our analysis
to other type of problems including those with more general linear elliptic operator and
other time-fractional evolution problems. We only concentrate on  the error analysis of the  
semidiscrete FVE method. Completely discrete schemes can be discussed in a similar way
 by choosing appropriate convolution quadratures and following the analysis in Section \ref{sec:discrete}. 

\subsection{Problems with more general elliptic operators}

More precisely, we consider problem 
\eqref{semi-FE} with 
$$A u =-\nabla\cdot(\kappa(x) \nabla u) + c(x) u,$$
where $\kappa(x)$ is a symmetric, positive definite  $2\times 2$ matrix function on $\bar\Omega$ with smooth entries and 
$c(x)\in L^\infty(\Omega)$ and $c(x)\geq c_0 >0.$
The corresponding  bilinear form $a(\cdot,\cdot):H_0^1(\Omega)\times H_0^1(\Omega)\rightarrow \mathbb{R}$ 
becomes
\begin{equation*}\label{bilinear-n}
a (w,\chi) = (\kappa(x) \nabla w,\nabla \chi)+(c(x)w, \chi)\;\;\;\forall \chi\in H^1_0(\Omega).
\end{equation*}
The natural generalization of the finite volume element  method \eqref{FV}  yields 
\begin{equation*} \label{Ah-n}
a_h(w,\chi)=  \sum_{P_i\in N_h^0} \chi(P_i)\left(- \int_{\partial K_{P_i}^*}
(\kappa\nabla w)\cdot{\bf n}\,ds +\int_{ K_{P_i}^*}c(x)w\, dx\,ds\right)
\quad \forall w \in V_h,\, \chi\in  \vhs.
\end{equation*}
In general, the bilinear form $a_h(w,\Pi_h^\ast\chi)$, $\chi \in V_h$,  is  not symmetric on $V_h$. 
However, if $\kappa$ and $c$ are constant over each element of the triangulation $ \mathcal{T}_h,$ 
then the bilinear form takes the form, see \cite{BR-1987},
$$a_h(w,\Pi_h^\ast\chi) = (\kappa(x) \nabla w,\nabla \chi)+(c(x)w, \Pi_h^\ast\chi)\quad \forall w,\chi\in V_h,$$
which is  symmetric since $(c(x)w, \Pi_h^\ast\chi)=(c(x)\chi, \Pi_h^\ast w)$. As symmetry is important 
in our analysis, we shall consider the modified bilinear form, see \cite{CLT-2013},
\begin{equation*} \label{Ah-2}
\tilde a_h(w,\chi)=  \sum_{P_i\in N_h^0} \chi(P_i)\left(- \int_{\partial K_{P_i}^*}
(\tilde\kappa(x)\nabla w)\cdot{\bf n}\,ds +\int_{ K_{P_i}^*}\tilde c(x) w\, dx\,ds\right)
\quad \forall w \in V_h,\, \chi\in  \vhs,
\end{equation*}
where, for each $x\in K$, $K\in \mathcal{T}_h$, $\tilde\kappa(x)=\kappa(x_K)$ and $\tilde c(x)=c(x_K)$, 
with $x_K$ being the barycenter of the element $K$. Now, the FVE method reads: 
find $\tilde u_h(t)\in V_h$ such that 
\begin{equation} \label{semi-FV-n}
(\tilde{u}_h',\chi)_h+ \tilde a_h(\Ba \tilde{u}_h,\Pi_h^\ast\chi)=  0\quad
\forall \chi\in V_h,\quad t\in (0,T], \quad \tilde{u}_h(0)=v_h.
\end{equation}
Introducing the  discrete operator $\tilde A_h:V_h\rightarrow V_h$ by
\begin{equation} \label{Dh-n}
(\tilde A_h w,\chi)_h=\tilde a_h(w,\Pi_h^\ast\chi)  \quad \forall w,\chi\in V_h,
\end{equation}
we rewrite \eqref{semi-FV-n}  as
\begin{equation} \label{FVP-n}
\tilde u_h'(t)+\Ba \tilde A_h \tilde u_h(t)=  0, \quad t>0, \quad \tilde u_h(0)=v_h.
\end{equation}
 
Following our analysis in Section \ref{sec:error}, with
$\xi(t)=\tilde{u}_h(t)-u_h(t)$, we split the error
 $\tilde{u}_h(t)-u(t)= (u_h(t)-u(t))+\xi(t)$, where it is well known that $u_h(t)-u(t)$ and $\nabla(u_h(t)-u(t))$ are  estimated by the analogues of \eqref{FE-1}-\eqref{FE-2}.
 It is, therefore, sufficient to 
 derive estimates for $\xi$, which satisfies for $t\geq 0$ 
 \begin{equation} \label{semi-FV-nn}
(\xi',\chi)_h+ \tilde a(\Ba \xi,\Pi_h^\ast\chi)=  -\epsilon_h(u_{ht},\chi)-\tilde\epsilon_h(u_h,\chi)\quad
\forall \chi\in V_h,\quad \tilde{u}_h(0)=v_h,
\end{equation}
where $\epsilon_h(\cdot,\cdot)$ is defined in  \eqref{m8}  and  $\tilde \epsilon_h(\cdot,\cdot)$ is given by
\begin{equation}\label{m8-n}
\tilde\epsilon_h(w,\chi)= \tilde a_h(w,\Pi_h^\ast\chi)-a(w,\chi) \quad \forall w,\chi \in V_h.
\end{equation}
Upon introducing the quadrature error operators  $Q_h:V_h\rightarrow V_h$ and $\tilde Q_h:V_h\rightarrow V_h$ defined by
\begin{equation}\label{m8-nn}
\tilde a_h(Q_h w,\Pi_h^\ast\chi)=\epsilon_h(\chi,\psi) \quad \text{and} \quad 
\tilde a_h(\tilde Q_h w,\Pi_h^\ast\chi)=\tilde\epsilon_h(\chi,\psi)\quad \forall w,\chi \in V_h,
\end{equation}
the equation  \eqref{semi-FV-nn} can be rewritten in the operator form  as
\begin{equation}\label{m10-n}
\xi_t(t)+\partial_t^{1-\alpha}\tilde{A}_h\xi(t)=-\tilde{A}_hQ_hu_{ht}(t)- \tilde{A}_h\tilde Q_hu_{h}(t),\quad  t>0,\quad  \xi(0)=0.
\end{equation}
To derive estimates for $\xi$, we need the following bound, see  \cite {CLT-2013} for a proof.
\begin{lemma}\label{lem:Qh-n}
Let $\tilde A_h$, $Q_h$ and $\tilde Q_h$ be the operators defined in \eqref{Dh-n} and \eqref{m8-nn}. Then
\begin{equation}\label{QQ-n}
\|\nabla Q_h\chi\|+h\|\tilde{A}_hQ_h\chi\|\leq Ch^{p+1}\|\nabla^{p}\chi\|\quad \forall \chi\in V_h, \quad p=0,1,
\end{equation}
and similar  result holds for the operator $\tilde Q_h$.
\end{lemma}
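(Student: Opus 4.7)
The plan is to mimic, step by step, the proof of Lemma \ref{lem:Qh} for the constant coefficient operator $\bar A_h$, replacing the Laplacian bilinear form by the modified form $\tilde a_h$. The crucial enabling fact is that, because $\tilde\kappa$ and $\tilde c$ are elementwise constant (barycentric values), the bilinear form $\tilde a_h(\cdot,\Pi_h^\ast\cdot)$ is symmetric on $V_h\times V_h$; together with the positivity of $\kappa$ and $c$ and standard FVE arguments (as in \cite{CLT-2013}), it is also uniformly coercive and bounded with respect to the $H^1$-seminorm on $V_h$, i.e.\ $c_0\|\nabla w\|^2\leq \tilde a_h(w,\Pi_h^\ast w)\leq C\|\nabla w\|^2$ for all $w\in V_h$.

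The first step is to record the two quadrature-error bounds that drive everything:
\begin{equation*}
|\epsilon_h(w,\chi)|\leq Ch^{p+q}\|\nabla^p w\|\,\|\nabla^q\chi\|,\qquad
|\tilde\epsilon_h(w,\chi)|\leq Ch^{p+q}\|\nabla^p w\|\,\|\nabla^q\chi\|,
\end{equation*}
for $w,\chi\in V_h$ and $p,q\in\{0,1\}$ with $p+q\geq 1$. The bound on $\epsilon_h$ is the classical FVE quadrature estimate exploited already for Lemma~\ref{lem:Qh}; the bound on $\tilde\epsilon_h$ is obtained elementwise using the Lipschitz smoothness of $\kappa$ and $c$, which makes $\tilde\kappa-\kappa$ and $\tilde c-c$ of size $O(h)$ on each $K$, and then composing this with the standard $O(h)$ estimate for $\Pi_h^\ast\chi-\chi$.

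The gradient bound for $Q_h$ follows by testing $\tilde a_h(Q_h\chi,\Pi_h^\ast\psi)=\epsilon_h(\chi,\psi)$ with $\psi=Q_h\chi$ and using coercivity:
\begin{equation*}
c_0\|\nabla Q_h\chi\|^2\leq \tilde a_h(Q_h\chi,\Pi_h^\ast Q_h\chi)
=\epsilon_h(\chi,Q_h\chi)\leq Ch^{p+1}\|\nabla^p\chi\|\,\|\nabla Q_h\chi\|,
\end{equation*}
so that $\|\nabla Q_h\chi\|\leq Ch^{p+1}\|\nabla^p\chi\|$ for $p=0,1$. For the $\tilde A_h Q_h$ bound I would use the defining relation $(\tilde A_h Q_h\chi,\phi)_h=\tilde a_h(Q_h\chi,\Pi_h^\ast\phi)=\epsilon_h(\chi,\phi)$, take $\phi=\tilde A_h Q_h\chi$, and use the equivalence of $\|\cdot\|_h$ and $\|\cdot\|$ together with the inverse inequality $\|\nabla\phi\|\leq Ch^{-1}\|\phi\|$ to get
\begin{equation*}
\|\tilde A_h Q_h\chi\|^2\leq Ch^{p+1}\|\nabla^p\chi\|\,\|\nabla\tilde A_h Q_h\chi\|
\leq Ch^{p}\|\nabla^p\chi\|\,\|\tilde A_h Q_h\chi\|,
\end{equation*}
which yields $h\|\tilde A_h Q_h\chi\|\leq Ch^{p+1}\|\nabla^p\chi\|$. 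For $\tilde Q_h$ the identical scheme goes through verbatim after replacing $\epsilon_h$ by $\tilde\epsilon_h$.

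The main obstacle I expect is the verification of the $\tilde\epsilon_h$-estimate: unlike $\epsilon_h$, which is a purely geometric/quadrature error, $\tilde\epsilon_h$ mixes the barycentric coefficient error in $\kappa$ and $c$ with the mass lumping implicit in $\Pi_h^\ast$, and requires a careful elementwise bookkeeping (integration by parts on each control-volume intersection with a triangle $K$, separating the contribution $(\tilde\kappa-\kappa)\nabla w\cdot\mathbf n$ from the standard FVE quadrature residual, and an analogous treatment of the reaction term). Once that is in place, the rest of the proof is essentially a copy of the argument for Lemma~\ref{lem:Qh}.
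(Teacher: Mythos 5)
The paper does not actually prove this lemma; it is quoted from \cite{CLT-2013}, so your attempt can only be judged on its own merits. Your overall architecture is the right one and matches the standard argument: symmetry and uniform coercivity of $\tilde a_h(\cdot,\Pi_h^\ast\cdot)$ on $V_h$, bounds on the error functionals $\epsilon_h$ and $\tilde\epsilon_h$, then testing the defining relations with $Q_h\chi$ and with $\tilde A_hQ_h\chi$. The $Q_h$ half is essentially correct, with one avoidable blemish: the global inverse inequality $\|\nabla\phi\|\le Ch^{-1}\|\phi\|$ requires quasi-uniformity, which is not assumed here (and the paper stresses elsewhere that the $Q_h$ bounds need no extra mesh hypotheses). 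You can dispense with it by invoking the $(p,0)$ form $|\epsilon_h(\chi,\phi)|\le Ch^{p}\|\nabla^{p}\chi\|\,\|\phi\|$ directly, or by applying the inverse inequality elementwise with $h_K$.

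The genuine gap is in the $\tilde\epsilon_h$ estimate, which you correctly identify as the crux but then justify incorrectly. For elementwise-constant $\tilde\kappa,\tilde c$ the Bank--Rose identity gives exactly $\tilde a_h(w,\Pi_h^\ast\chi)=(\tilde\kappa\nabla w,\nabla\chi)+(\tilde c\,w,\Pi_h^\ast\chi)$, hence
$\tilde\epsilon_h(w,\chi)=((\tilde\kappa-\kappa)\nabla w,\nabla\chi)+(\tilde c\,w,\Pi_h^\ast\chi-\chi)+((\tilde c-c)w,\chi)$:
the coefficient error and the $\Pi_h^\ast$ quadrature error are \emph{additive}, not multiplicative, so "composing'' the $O(h)$ bound on $\tilde\kappa-\kappa$ with the $O(h)$ bound on $\Pi_h^\ast\chi-\chi$ does not gain a power of $h$. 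With only $\|\tilde\kappa-\kappa\|_{L^\infty(K)}\le Ch$ the principal term is merely $Ch\|\nabla w\|\,\|\nabla\chi\|$, which is one power of $h$ short of the $h^{2}\|\nabla w\|\,\|\nabla\chi\|$ needed for the $p=1$ case of the $\tilde Q_h$ bound --- and that $O(h^2)$ case is exactly what Theorem \ref{thm:H2-1} consumes via \eqref{Q1aa-m}. The missing ingredient is the second-order accuracy of barycentric evaluation: since $\nabla w\cdot\nabla\chi$ is constant on each $K$ and $\int_K(x-x_K)\,dx=0$, one has $\bigl|\int_K(\tilde\kappa-\kappa)\nabla w\cdot\nabla\chi\,dx\bigr|\le Ch_K^2|K|\,|\nabla w|_K|\,|\nabla\chi|_K|$, and a mean-value splitting of $w\chi$ on each $K$ gives the analogous $O(h^2)$ bound for the reaction term. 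This superconvergence of the midpoint rule is precisely the reason the paper freezes the coefficients at the barycenter $x_K$ rather than at an arbitrary point of $K$; without it your sketch proves only an $O(h)$ version of the $\tilde Q_h$ estimate.
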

%


 Now, we show the following estimates. 
 \begin{theorem} \label{thm:H2-1} For the error $\xi$ defined by $(\ref{m10-n})$, there is a positive constant $C,$
 independent of $h,$ such that for $t>0$,  
 \begin{equation}\label{e1-m}
\|\xi(t)\|+h\|\nabla\xi(t))\|\leq C\max\{t^{1-\alpha/2},t^{1-\alpha}\}h^2\|A_hv_h\|,
\end{equation}
\begin{equation}\label{e1-mm}
\|\xi(t)\|+h\|\nabla\xi(t))\|\leq Ct^{1-\alpha/2}h^2\|\nabla v_h\|,
\end{equation}
and
\begin{equation}\label{e1-mmm}
\|\xi(t)\|+h\|\nabla\xi(t))\|\leq Ct^{1-\alpha}h\|v_h\|.
\end{equation}
If $\tilde Q_h$ satisfies $\|\tilde Q_h\chi\|\leq Ch^2\|\chi\|$ $\forall \chi \in V_h$, then
\begin{equation}\label{e1-4m}
\|\xi(t)\|\leq Ct^{1-\alpha}h^2\|v_h\|.
\end{equation}
\end{theorem}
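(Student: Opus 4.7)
The plan is to adapt the Laplace transform contour-integral strategy of Sections~\ref{sec:H2}--\ref{sec:L2} to equation~\eqref{m10-n}. Setting $\hat E_h(z)=z^{\alpha-1}(z^\alpha I+\tilde A_h)^{-1}$ and taking Laplace transforms, the error $\xi(t)$ admits the representation over the contour $\Gamma=\Gamma_{\theta,1/t}$,
\begin{equation*}
\xi(t) = -\frac{1}{2\pi i}\int_{\Gamma} e^{zt}\,\hat E_h(z)\,\tilde A_h\bigl[Q_h\widehat{u_{ht}}(z) + \tilde Q_h\,\hat u_h(z)\bigr]\,dz,
\end{equation*}
which I would split as $\xi=\xi_1+\xi_2$ according to the two source terms. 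The smoothing estimates of Lemma~\ref{lem:Ah2} (with $\bar A_h$ replaced by $\tilde A_h$, and also applied to $\hat F_h(z)=z^{\alpha-1}(z^\alpha I+A_h)^{-1}$) together with the quadrature bounds of Lemma~\ref{lem:Qh-n} provide all the tools needed to estimate each piece.

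The piece $\xi_1$, driven by $Q_h\widehat{u_{ht}}$, has precisely the structure analyzed in Theorems~\ref{thm:H2}--\ref{thm:L2}: combining $\|\hat E_h(z)\tilde A_h\chi\|\leq C|z|^{\alpha/2-1}\|\nabla\chi\|$ with $\|\nabla Q_h\chi\|\leq Ch^2\|\nabla\chi\|$ and the identity $\widehat{u_{ht}}(z)=z\hat u_h(z)-v_h=-z^{1-\alpha}\hat F_h(z)A_hv_h$ yields bounds that, in each of the four regimes, are dominated by the corresponding $\xi_2$ contribution and need not be tracked separately.

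The new ingredient is $\xi_2$, driven by $\tilde Q_h\hat u_h$. I would first estimate
\begin{equation*}
\|\hat E_h(z)\tilde A_h\tilde Q_h\hat u_h(z)\|\leq C|z|^{\alpha/2-1}\|\nabla\tilde Q_h\hat u_h(z)\|\leq Ch^2|z|^{\alpha/2-1}\|\nabla\hat u_h(z)\|,
\end{equation*}
and then bound $\|\nabla\hat u_h(z)\|$ according to the regime. Using the norm identity $\|\nabla\hat F_h(z)v_h\|=\|\hat F_h(z)A_h^{1/2}v_h\|$ together with either the resolvent bound \eqref{m3} or the smoothing estimate \eqref{1}, two complementary bounds $C|z|^{-1}\|\nabla v_h\|$ and $C|z|^{\alpha/2-1}\|v_h\|$ are available. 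Plugging the first into the contour gives $\int_{\Gamma}|e^{zt}||z|^{\alpha/2-2}\,|dz|\sim t^{1-\alpha/2}$ and hence \eqref{e1-mm}; plugging the second, together with $\|v_h\|\leq C\|A_hv_h\|$, gives $\int_{\Gamma}|e^{zt}||z|^{\alpha-2}\,|dz|\sim t^{1-\alpha}$, so that the two simultaneously valid bounds for $\|\nabla\hat u_h\|$ combine to produce the $\max\{t^{1-\alpha/2},t^{1-\alpha}\}$ factor in \eqref{e1-m}. The $h\|\nabla\xi\|$ half of each estimate follows identically, using $\|\nabla\hat E_h(z)\chi\|\leq C|z|^{\alpha/2-1}\|\chi\|$ in place of $\|\hat E_h(z)\tilde A_h\chi\|$ and losing one factor of $h$ in the quadrature step. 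For \eqref{e1-mmm}, where no regularity of $v_h$ is assumed, only the crude bound $\|\nabla\tilde Q_h\chi\|\leq Ch\|\chi\|$ is available, yielding $h$ instead of $h^2$ while $\|\hat u_h(z)\|\leq C|z|^{-1}\|v_h\|$ preserves the $t^{1-\alpha}$ factor. Finally, under the extra assumption $\|\tilde Q_h\chi\|\leq Ch^2\|\chi\|$ I bypass the gradient bound and use $\|\hat E_h(z)\tilde A_h\tilde Q_h\hat u_h(z)\|\leq C|z|^{\alpha-1}\|\tilde Q_h\hat u_h(z)\|\leq Ch^2|z|^{\alpha-2}\|v_h\|$, which integrates to \eqref{e1-4m}.

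The main delicate point is bookkeeping: the four estimates \eqref{e1-m}--\eqref{e1-4m} pair four different norms of $v_h$ with four different time exponents, and matching each exponent requires selecting precisely the right combination of smoothing or resolvent estimate for $\hat F_h$ together with the appropriate quadrature estimate for $\tilde Q_h$. Most subtly, the $\max\{t^{1-\alpha/2},t^{1-\alpha}\}$ behaviour in \eqref{e1-m} forces the simultaneous use of two distinct bounds on $\|\nabla\hat u_h\|$, neither of which dominates the other uniformly in $|z|$. Once the bookkeeping is settled, the remaining contour estimates are routine adaptations of those already carried out in Sections~\ref{sec:H2}--\ref{sec:L2}.
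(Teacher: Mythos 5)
Your proposal follows essentially the same route as the paper: the same splitting $\xi=\xi_1+\xi_2$ via the Laplace transform representation of \eqref{m10-n}, with $\xi_1$ handled by the arguments of Theorems \ref{thm:H2}--\ref{thm:L2} (using Lemma \ref{lem:Qh-n}) and $\xi_2$ bounded through the estimate \eqref{Q1aa-m} combined with regime-dependent bounds on $\|\nabla\hat u_h(z)\|$ or $\|\hat u_h(z)\|$. The only (cosmetic) difference is that for \eqref{e1-m} you bound $\|\nabla\hat u_h(z)\|$ directly by the two alternative estimates $C|z|^{-1}\|\nabla v_h\|$ and $C|z|^{\alpha/2-1}\|v_h\|$, whereas the paper extracts the two terms from the identity $\hat E_h(z)=z^{-1}[I-\hat E_h(z)\tilde A_h]$ together with \eqref{11}; both yield the stated $\max\{t^{1-\alpha/2},t^{1-\alpha}\}$ factor.
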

\begin{proof}
By taking Laplace transforms in \eqref{m10-n},  we represent $\xi(t)$  by
\begin{equation}\label{m10-nn}
\begin{aligned}
 \xi(t)=-\frac{1}{2\pi i}\int_{\Gamma} e^{zt} & \hat{E}_h(z)\tilde{A}_hQ_h\hat{u}_{ht}(z)\,dz \\
&-\frac{1}{2\pi i}\int_{\Gamma} e^{zt}\hat{E}_h(z)\tilde{A}_h\tilde Q_h\hat{u}_{h}(z)\,dz =: \xi_1+\xi_2,
\end{aligned}
\end{equation}
where $\hat{E}_h(z)=z^{\alpha-1}(z^\alpha I+\tilde{A}_h)^{-1}$. The first term $\xi_1$ is bounded as 
in the proofs of Theorems \ref{thm:H2} and \ref{thm:L2} using Lemma \ref{lem:Qh-n} instead of 
Lemma \ref{lem:Qh}. To bound the second term $\xi_2$, 
we notice that, similar to \eqref{Q1aa}, we arrive at
\begin{equation}\label{Q1aa-m}
\|\hat{E}_h(z)\tilde{A}_h\tilde Q_h\hat u_h(z)\|+h\|\nabla \hat{E}_h(z)\tilde{A}_h\tilde Q_h\hat u_h(z)\|\leq Ch^2 
|z|^{\alpha/2-1}\|\nabla \hat{u}_h(z)\|.
\end{equation}
Using the identity 
$$
\hat E_h(z)=z^{-1}[I-\hat E_h(z)\tilde A_h]
$$
and \eqref{11}, it follows that
\begin{eqnarray}\label{Q2-a}
\|\nabla\hat E_h(z)v_h\|&\leq& |z|^{-1}[\|\nabla v_h\|+\|\nabla\hat  E_h(z)\tilde A_hv_h\|]\nonumber\\
&\leq & C |z|^{-1}[\|\tilde A_hv_h\|+|z|^{\alpha/2-1}\|\tilde A_hv_h\|].
\end{eqnarray}
Substituting  \eqref{Q2-a} in \eqref{Q1aa-m} and using the integral representation of $\xi_2$ in \eqref{m10-nn}, 
we obtain the estimate \eqref{e1-m}. To derive \eqref{e1-mm},  a  use of \eqref{m3} yields
$$
\|\nabla\hat E_h(z)v_h\|\leq C |z|^{-1} \|\nabla v_h\|.
$$
Then, the bound follows immediately. For the last cases \eqref{e1-mmm} and \eqref{e1-4m}, we apply \eqref{00} to get 
$$\|\hat{E}_h(z)\bar{A}_h\tilde Q_h\hat{u}_h\|_p\leq C|z|^{\alpha-1}\|\tilde Q_h\hat{u}_h\|_p,\quad p=0,1.$$ 
Then, the left-hand side in \eqref{Q1aa-m} is bounded by
\begin{equation*}\label{Q3a-n}
 C|z|^{\alpha-1}( \|\tilde Q_h\hat{u}_h(z)\| + h \|\nabla \tilde Q_h\hat{u}_h(z)\|).
\end{equation*}
Using Lemma \ref{lem:Qh-n} and the fact that $\|\hat{u}_h(z)\|\leq |z|^{-1}\|v_h\|$, we obtain the desired 
results by following the arguments  in the proof of Theorem \ref{thm:L2}. This completes the proof of the theorem.
\end{proof}

\subsection{Other time-fractional evolution problems}
Our analysis can be applied to obtain optimal FVE error estimates for  other type of time-fractional 
evolution problems. 
This may include, for instance, evolution equations with memory terms of convolution type:
\begin{equation} \label{EE}
u'(x,t)+\mathcal{I}^\alpha A u(x,t)=0, \quad \alpha\in(0,1),
\end{equation}
see \cite{LST-1996}, which is also called fractional diffusion-wave equation, 
the following parabolic integro-differential equation with singular kernel of the type
\begin{equation}\label{PIDE-S}
u'(x,t)+ (I + \mathcal{I}^\alpha) A u(x,t)=0, \quad \alpha\in(0,1),
\end{equation}
see, \cite{MST2006},
and the Rayleigh-Stokes problem described by the time-fractional differential equation
\begin{equation} \label{RS}
u'(x,t)+(I+\gamma\partial_t^\alpha) A u(x,t)=0,\quad \alpha\in(0,1),
\end{equation}
which has been considered in \cite{EJLZ2016}. Here $\gamma$ is a positive constant. 
In order to unify  problems \eqref{EE}-\eqref{RS}, we define $\mathcal{J}^\alpha$ denoting 
a time integral/differenial operator and consider the unified problem by 
\begin{equation} \label{J}
u'(x,t)+\mathcal{J}^\alpha A u(x,t)=0.
\end{equation}
Now an application of Laplace transforms  in \eqref{J} yields
$$
z\hat{u}+h(z)A\hat{u}=v,
$$
with some function $h(z)$ depending on $\alpha$. Hence, we formally have, $\hat{u}= (z+ h(z) A)^{-1} v =: \hat{E}_h(z)v.$

Let $\bar{A}_h$ and $Q_h$ be the  operators defined in Section \ref{sec: FVM}.  
Then, the FVE method reads: find $\bar u_h(t)\in V_h$ such that
\begin{equation} \label{semi-GG1}
\bar{u}_h'+  \mathcal{J}^\alpha \bar A_h\bar{u}_h=  0\quad
\quad t\in (0,T], \quad \bar{u}_h(0)=v_h.
\end{equation}
Again using the corresponding  FE solution $u_h,$ we split $\bar{u}_h- u := (u_h-u) + (\bar{u}_h- u_h)
=:(u_h-u) + \xi,$ where $\xi$ satisfies
the similar representation formula 
\begin{equation}\label{m013-nnn}
\xi(t)=-\frac{1}{2\pi i}\int_{\bar{\Gamma}_\theta} e^{zt}\hat{E}_h(z)\bar{A}_hQ_h\hat{u}_{ht}(z)\,dz.
\end{equation} 
Note that in this case the operator $\hat{E}_h(z)$ is  given by
\begin{equation}\label{sm1}
\hat{E}_h(z)=\beta(z)(z\beta(z)I+\bar A_h)^{-1},
\end{equation} 
and $\beta(z)=h(z)^{-1}$. For the problem \eqref{EE}, we observe that $\beta(z)=z^{\alpha},$  for the problem
\eqref{PIDE-S}, $\beta(z)= z^\alpha/(1+z^{\alpha}),$ and for 
the problem \eqref{RS}, $\beta(z)=1/(1+\gamma z^\alpha)$.  We assume that one can properly choose $\theta$ in $(\pi/2,\pi)$ such that $z\beta(z)\in \Sigma_{\theta'}$ for all $z\in \Sigma_{\theta}$ where the angle $\theta'\in(\pi/2,\pi)$. This is indeed
 possible in all given examples. With this, the resolvent estimate yields
\begin{equation}\label{sm2}
\|(z\beta(z)I+\bar A_h)^{-1}\|\leq \frac{M_{\theta'}}{|z\beta(z)|}\quad \forall z\in \Sigma_{\theta},
\end{equation}
where $M_{\theta'}=1/\sin(\pi-\theta')$.   Therefore, from \eqref{sm1},
\begin{equation}\label{sm3}
\|\hat{E}_h(z)\|\leq M_{\theta'}|z|^{-1} \quad \forall z\in \Sigma_{\theta}.
\end{equation} 
Following  arguments from \cite{LST-1996},  we deduce that
\begin{equation}\label{sm2}
\|\bar A_h\hat{E}_h(z)\|\leq C_{\theta'}|\beta(z)|\,\quad  \forall z\in \Sigma_{\theta}.
\end{equation} 
Now, we can prove the  analogous of Lemma \ref{lem:Ah}.
\begin{lemma}\label{lem:Ah2-n} Let $\hat E_h(z)$ be given by \eqref{sm1}. 
With $\chi\in V_h$, the following estimates hold:
\begin{equation}\label{0-n}
 \|{\bar A_h\hat{E}_h(z)\chi}\|\leq C_{\theta'} |\beta(z)|^{1-p/2}|z|^{-p/2} \;\|\bar {A}_h^{p/2}{\chi}\| 
 \quad \forall z\in \Sigma_\theta,\quad 0\leq p\leq 2,
 \end{equation} 
 \begin{equation}\label{1-n}
|{\hat{E}_h(z)\chi}|_1\leq C_{\theta'} |\beta(z)|^{1/2}|z|^{-1/2} \|{\chi}\|\quad \forall z\in \Sigma_\theta,
 \end{equation}  
 where $C_{\theta'}$ is independent of the mesh size $h$.
\end{lemma}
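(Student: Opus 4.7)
The plan is to establish the two estimates in Lemma 6.1 by combining the already-derived bounds \eqref{sm3} and \eqref{sm2} with the self-adjointness of $\bar{A}_h$ with respect to $(\cdot,\cdot)_h$, then filling in intermediate values of $p$ by interpolation. The strategy exactly mirrors the proof of Lemma 2.1, the only new ingredient being that the fractional power $z^{\alpha}$ is replaced by the more general symbol $z\beta(z)$, and the resolvent bound \eqref{sm2} supplies precisely the ``smoothing factor'' $|\beta(z)|$ that appears in \eqref{0-n}.

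First I would handle the endpoints of \eqref{0-n}. For $p=0$ the estimate is just the rewriting of \eqref{sm2}:
\begin{equation*}
\|\bar{A}_h \hat{E}_h(z)\chi\|\leq C_{\theta'} |\beta(z)|\,\|\chi\|.
\end{equation*}
For $p=2$, I would use the fact that $\bar{A}_h$ commutes with $\hat{E}_h(z)=\beta(z)(z\beta(z)I+\bar{A}_h)^{-1}$, which follows from the functional calculus of the self-adjoint positive operator $\bar{A}_h$ with respect to $(\cdot,\cdot)_h$. Writing $\bar{A}_h \hat{E}_h(z)\chi=\hat{E}_h(z)\bar{A}_h\chi$ and invoking \eqref{sm3} gives
\begin{equation*}
\|\bar{A}_h \hat{E}_h(z)\chi\|\leq M_{\theta'}|z|^{-1}\|\bar{A}_h\chi\|,
\end{equation*}
which is \eqref{0-n} with $p=2$. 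The intermediate cases $p\in(0,2)$ then follow by spectral interpolation: writing $\chi=\sum_j c_j\phi_j^h$ in the eigenbasis of $\bar A_h$ (with respect to $(\cdot,\cdot)_h$) and applying the pointwise inequality $|\lambda_j^h|^{p/2}\leq |\beta(z)|^{1-p/2}|z|^{-p/2}\cdot|\lambda_j^h|^{p/2}$ blended with the two endpoint bounds through the $\log$-convexity of $\lambda\mapsto \lambda \cdot |z\beta(z)+\lambda|^{-1}$, one obtains
\begin{equation*}
\|\bar A_h\hat E_h(z)\chi\|\leq C_{\theta'}\,|\beta(z)|^{1-p/2}|z|^{-p/2}\,\|\bar A_h^{p/2}\chi\|.
\end{equation*}
Equivalently, this is just the real interpolation between the $p=0$ and $p=2$ inequalities applied to the bounded operator $\bar A_h\hat E_h(z)$ acting between the scales generated by $\bar A_h$.

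For the gradient bound \eqref{1-n}, I would use the identity $|w|_1^2=(\bar A_h w,w)_h=\|\bar A_h^{1/2}w\|_h^2$, which is equivalent (uniformly in $h$) to $\|\bar A_h^{1/2}w\|^2$ by the equivalence of $(\cdot,\cdot)_h^{1/2}$ and $\|\cdot\|$ on $V_h$. By commutativity and the elementary spectral inequality $\|\bar A_h^{1/2}\phi\|^2\leq \|\bar A_h\phi\|\,\|\phi\|$, I would write
\begin{equation*}
|\hat E_h(z)\chi|_1^2\leq C\,\|\bar A_h \hat E_h(z)\chi\|\,\|\hat E_h(z)\chi\|.
\end{equation*}
Substituting the $p=0$ case of \eqref{0-n} into the first factor and \eqref{sm3} into the second yields
\begin{equation*}
|\hat E_h(z)\chi|_1^2\leq C_{\theta'}\,|\beta(z)|\,|z|^{-1}\,\|\chi\|^2,
\end{equation*}
which is exactly \eqref{1-n}.

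The only subtle point is making sure the self-adjoint functional calculus is used in the correct inner product, so that the commutativity $\bar A_h \hat E_h(z)=\hat E_h(z)\bar A_h$ and the identification $|w|_1=\|\bar A_h^{1/2}w\|_h$ are justified; once that is pinned down, the inequalities reduce to the endpoint bounds \eqref{sm3}, \eqref{sm2} and an interpolation argument, and all constants depend only on $\theta'$ (hence on $\theta$), not on $h$.
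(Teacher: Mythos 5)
Your proposal is correct, and for \eqref{0-n} it is essentially the paper's own argument: the paper obtains that estimate by interpolating \eqref{sm3} and \eqref{sm2}, which is exactly your endpoint-plus-spectral-interpolation scheme (the $p=2$ endpoint via commutativity of $\bar A_h$ with $\hat E_h(z)$ and the $p=0$ endpoint being \eqref{sm2} verbatim). For \eqref{1-n} you take a slightly different route: the paper simply invokes the resolvent-gradient bound $\|\nabla (z\beta(z)I+\bar A_h)^{-1}\chi\|\leq C |z\beta(z)|^{-1/2}\|\chi\|$ from Cuesta--Lubich--Palencia (their (2.13)), whereas you derive the same conclusion self-containedly from the identity $|w|_1^2=(\bar A_h w,w)_h$, Cauchy--Schwarz in the $(\cdot,\cdot)_h$ inner product, the uniform equivalence of $(\cdot,\cdot)_h^{1/2}$ with $\|\cdot\|$ on $V_h$, and the two endpoint bounds \eqref{sm2}--\eqref{sm3}. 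Your version buys independence from the external reference at the cost of a few extra lines; the substance is the same, since the cited estimate is itself proved by the same energy/Cauchy--Schwarz mechanism. Your closing caveat about performing the functional calculus in the $(\cdot,\cdot)_h$ inner product is exactly the right point to flag, and it is what makes both the commutativity and the identification $|w|_1=\|\bar A_h^{1/2}w\|_h$ legitimate.
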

\begin{proof}
We obtain the first estimate \eqref{0-n} by interpolating \eqref{sm3} 
and  \eqref{sm2}. The second estimate follows from the fact that
$$
\|\nabla (z\beta(z)I+\bar A_h)^{-1}\chi\|\leq C |z\beta(z)|^{-1/2} \|\chi\|\quad \forall \chi \in V_h,
$$
 see (2.13) in \cite{Lubich-2006}.
\end{proof}

In the following theorem,  optimal error estimates  
are obtained for  smooth and nonsmooth initial data $v\in \dot{H}^q(\Omega)$, $q=0,1,2.$
\begin{theorem} \label{thm:H2-n} 
%
For the error  $\xi$ defined by  $\eqref{m013-nnn}$, there is a positive  constant $C$, independent of
$h,$ such that $t>0$, 
\begin{equation}\label{e1-n}
\|\xi(t)\|+h\|\nabla\xi(t))\|\leq Ch^2\|\bar A_hv_h\|.
\end{equation}
If $|\beta(z)|\leq C |z|^\mu$ $\forall z\in\Sigma_\theta$ for some real $\mu<1$, then
\begin{equation}\label{e1-nn}
\|\xi(t)\|+h\|\nabla\xi(t))\|\leq Ct^{-(\mu+1)/2}h^2\|\nabla v_h\|.
\end{equation}
If $|\beta(z)|\leq C |z|^\mu$ $\forall z\in\Sigma_\theta$ and $\bar Q$  satisfies \eqref{sym}, then
\begin{equation}\label{e1-nnn}
\|\xi(t)\|+h\|\nabla\xi(t))\|\leq Ct^{-(\mu+1)}h^2\| v_h\|.
\end{equation}

\end{theorem}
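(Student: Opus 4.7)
The plan is to estimate the integrand in \eqref{m013-nnn} pointwise along $z\in\bar\Gamma_\theta$, then invoke the standard contour estimate $\int_{\bar\Gamma_\theta}|e^{zt}||z|^\sigma\,|dz|\le Ct^{-\sigma-1}$ (valid for $\sigma>-1$). The main tools are the smoothing estimates of Lemma \ref{lem:Ah2-n} for $\hat E_h(z)$, the quadrature bound of Lemma \ref{lem:Qh} for $Q_h$, and a representation of $\hat u_{ht}(z)$ derived from Laplace-transforming the Galerkin FE equation $u_h'+\mathcal{J}^\alpha A_hu_h=0$: this gives $\hat u_h(z)=\hat F_h(z)v_h$ and, in view of $(zI+h(z)A_h)\hat u_h=v_h$, the identity $\hat u_{ht}(z)=-\beta(z)^{-1}A_h\hat F_h(z)v_h$. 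Note that $\hat F_h(z)$ satisfies the same bounds as $\hat E_h(z)$ with $\bar A_h$ replaced by $A_h$.

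For \eqref{e1-n}, I would apply Lemma \ref{lem:Ah2-n} with $p=1$ (using that $\hat E_h(z)$ commutes with $\bar A_h$) together with Lemma \ref{lem:Qh} to obtain $\|\hat E_h(z)\bar A_hQ_h\hat u_{ht}\|+h\|\nabla\hat E_h(z)\bar A_hQ_h\hat u_{ht}\|\le Ch^2|\beta(z)|^{1/2}|z|^{-1/2}\|\nabla\hat u_{ht}(z)\|$. The $H^1$ bound of Lemma \ref{lem:Ah2-n} applied to $\hat F_h(z)$ with argument $A_hv_h$ then yields $\|\nabla\hat u_{ht}(z)\|\le C|\beta(z)|^{-1/2}|z|^{-1/2}\|A_hv_h\|$, so that the $\beta(z)$ factors cancel and the integrand is $\le Ch^2|z|^{-1}\|A_hv_h\|$; the contour integral is then uniformly bounded in $t$. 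The passage $\|A_hv_h\|\leftrightarrow\|\bar A_hv_h\|$ is by norm equivalence on $V_h$, e.g., via $A_hR_h=P_hA$ and $\bar A_hR_h=\bar P_hA$ as in the proof of Lemma \ref{lem:BE}.

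For \eqref{e1-nn}, the same bound on $\hat E_h\bar A_hQ_h\hat u_{ht}$ is kept, but $\nabla\hat u_{ht}$ is controlled differently: from $\hat u_{ht}=z\hat F_h(z)v_h-v_h$ and the $H^1\!\to\!H^1$ resolvent bound $\|\nabla\hat F_h(z)v_h\|\le C|z|^{-1}\|\nabla v_h\|$ (which follows from \eqref{sm3} because $\hat F_h(z)$ commutes with $A_h^{1/2}$), one gets $\|\nabla\hat u_{ht}(z)\|\le C\|\nabla v_h\|$; under $|\beta(z)|\le C|z|^\mu$, the residual factor is $\le C|z|^{(\mu-1)/2}$, and the contour integral contributes $t^{-(\mu+1)/2}$. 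For \eqref{e1-nnn}, the $L^2$ part comes from Lemma \ref{lem:Ah2-n} with $p=0$ combined with the symmetric-mesh bound $\|Q_h\chi\|\le Ch^2\|\chi\|$ and $\|\hat u_{ht}(z)\|\le C\|v_h\|$ (from \eqref{sm3}), giving integrand $\le Ch^2|\beta(z)|\|v_h\|$ and contour integral rate $t^{-(\mu+1)}$. For the gradient I would use the algebraic identity $\bar A_h\hat E_h(z)=\beta(z)I-z\beta(z)\hat E_h(z)$ to split
\[
\nabla\hat E_h(z)\bar A_hQ_h\hat u_{ht}=\beta(z)\nabla Q_h\hat u_{ht}-z\beta(z)\nabla\hat E_h(z)Q_h\hat u_{ht};
\]
the first piece, bounded by $Ch|\beta(z)|\|v_h\|$ via Lemma \ref{lem:Qh}, contributes $Ch^2t^{-(\mu+1)}\|v_h\|$ to $h\|\nabla\xi\|$, while the second, bounded via \eqref{1-n} and the symmetric-mesh bound by $Ch^2|\beta(z)|^{3/2}|z|^{1/2}\|v_h\|$, is a higher-order-in-$h$ perturbation absorbed into the first.

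The main bookkeeping challenge is balancing the powers of $|\beta(z)|$ and $|z|$ across the three cases so that the contour integrals are absolutely convergent at the advertised rate — particularly in case \eqref{e1-nnn}, where a naive use of \eqref{1-n} on the gradient would yield only $t^{-(\mu+1)/2}$, and the algebraic splitting above is essential to recover the sharp $t^{-(\mu+1)}$ factor. A secondary point is verifying, for each concrete kernel $\beta(z)=z^\alpha$, $z^\alpha/(1+z^\alpha)$, $1/(1+\gamma z^\alpha)$ associated with \eqref{EE}--\eqref{RS}, that the sector condition $z\beta(z)\in\Sigma_{\theta'}$ underlying \eqref{sm3} and Lemma \ref{lem:Ah2-n} can indeed be met with a suitable angle $\theta'\in(\pi/2,\pi)$, which is what legitimises the unified treatment.
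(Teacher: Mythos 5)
Your proposal follows the paper's own route essentially verbatim for the $L^2(\Omega)$-bounds in all three cases and for the gradient bounds in \eqref{e1-n} and \eqref{e1-nn}: the same contour representation \eqref{m013-nnn}, the same combination of Lemma \ref{lem:Ah2-n} (with $p=1$) and Lemma \ref{lem:Qh} producing the factor $Ch^2|\beta(z)|^{1/2}|z|^{-1/2}\|\nabla\widehat{u_{ht}}(z)\|$, the same cancellation of the $\beta$-powers via $\widehat{u_{ht}}=-\beta(z)^{-1}\hat F_h(z)A_hv_h$, and the same bounds $\|\nabla\widehat{u_{ht}}\|\le C\|\nabla v_h\|$ and $\|\widehat{u_{ht}}\|\le C\|v_h\|$ for the remaining cases. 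Those parts are correct.

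The one place you depart from the paper is the gradient estimate in case \eqref{e1-nnn} (which the paper leaves to the reader as "derived in a similar way"), and there your argument has a genuine gap. After the splitting $\nabla\hat E_h(z)\bar A_hQ_h\widehat{u_{ht}}=\beta(z)\nabla Q_h\widehat{u_{ht}}-z\beta(z)\nabla\hat E_h(z)Q_h\widehat{u_{ht}}$, the second term is bounded, as you say, by $Ch^2|\beta(z)|^{3/2}|z|^{1/2}\|v_h\|\le Ch^2|z|^{(3\mu+1)/2}\|v_h\|$, whose contour integral is of order $t^{-3(\mu+1)/2}$; it therefore contributes $Ch^3t^{-3(\mu+1)/2}\|v_h\|$ to $h\|\nabla\xi(t)\|$, i.e.\ the target bound multiplied by $ht^{-(\mu+1)/2}$. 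This is \emph{not} absorbed into $Ch^2t^{-(\mu+1)}\|v_h\|$ uniformly in $t$: for $t\le c\,h^{2/(\mu+1)}$ the "perturbation" dominates. The splitting is in fact unnecessary: since $\|\nabla\psi\|$ coincides (up to uniform equivalence) with the discrete $\bar A_h^{1/2}$-norm of $\psi$ and $\hat E_h(z)$ commutes with $\bar A_h^{1/2}$, one gets directly $\|\nabla\hat E_h(z)\bar A_hQ_h\widehat{u_{ht}}\|\le C|\beta(z)|\,\|\nabla Q_h\widehat{u_{ht}}\|\le Ch|\beta(z)|\,\|\widehat{u_{ht}}\|\le Ch|z|^{\mu}\|v_h\|$, using \eqref{QQ} with $p=0$ --- exactly the step behind \eqref{0m14} in the proof of Theorem \ref{thm:L2} --- and the contour integral then yields $h\|\nabla\xi(t)\|\le Ch^2t^{-(\mu+1)}\|v_h\|$ without even invoking \eqref{sym} for this term.
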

\begin{proof} We will only prove the estimate in the $L^2(\Omega)$-norm. The estimate in the gradient norm is derived in a similar way.
We shall make use of the estimate  \eqref{xi-estimate-1} obtained in the proof of  Theorem \ref{thm:H2}.

When $q=2$, that is, $v\in \dot{H}^2(\Omega),$ apply (\ref{0-n}) with $p=1$ and (\ref{1-n}) in Lemma \ref{lem:Ah2-n} to get
\begin{equation*}\label{m014-n}
\|\hat{E}_h(z)\bar{A}_hQ_h\widehat{u_{ht}}(z)\|\leq 
   C  |\beta(z)|^{1/2} |z|^{-1/2}\|\nabla Q_h \widehat{u_{ht}}(z)\|,
\end{equation*}
and
\begin{equation*}\label{g2-n}
\|\nabla \hat{E}_h(z)\bar{A}_hQ_h\widehat{u_{ht}}(z)\|\leq C  |\beta(z)|^{1/2} |z|^{-1/2}\|\bar A_hQ_h\widehat{u_{ht}}(z)\|. 
\end{equation*}
Then, by (\ref{QQ}) in Lemma \ref{lem:Qh}, we deduce 
\begin{equation}\label{Q1aa-n}
\|\hat{E}_h(z)\bar{A}_hQ_h\widehat{u_{ht}}(z)\|+h\|\nabla \hat{E}_h(z)\bar{A}_hQ_h\widehat{u_{ht}}(z)\|\leq 
Ch^2 |\beta(z)|^{1/2} |z|^{-1/2}\|\nabla \widehat{u_{ht}}(z)\|.
\end{equation}
Since 
$$\widehat{u_{ht}}(z)=-h(z)\bar A_h\hat{{u}}_{h}(z)= -h(z)\bar A_h\hat{F}_{h}(z)v_h,$$ 
an  estimate analogous to (\ref{1-n}) yields
\begin{eqnarray}
\|\nabla \widehat{u_{ht}}(z)\|&= &|h(z)|\|\nabla \hat{F}_{h}(z)\bar A_hv_h\|\nonumber\\
&\leq & C|h(z)|\,|\beta(z)|^{1/2} |z|^{-1/2}\|\bar A_hv_h\|\nonumber\\
&\leq &  C |\beta(z)|^{-1/2} |z|^{-1/2} \|\bar  A_h v_h\|.\nonumber
\end{eqnarray}
Thus, the left-hand side in (\ref{Q1aa-n}) is  bounded by $|z|^{-1} \|\bar A_h v_h\|$. Now, substitution in
\eqref{xi-estimate-1} gives the desired estimate.


For $q=1$, we notice that in view of \eqref{sm3},  the bound (\ref{m0156}) holds, and therefore 
substitution in  (\ref{Q1aa-n}) gives the  new upper bound $Ch^2 |z|^{\mu/2-1/2} \|\nabla v_h\|$ in (\ref{Q1aa-n}). 
The estimate \eqref{e1-nn} follows then by integration.

Finally, for $q=0$, we have by \eqref{sm2},
$$\|\hat{E}_h(z)\bar{A}_hQ_h\widehat{u_{ht}}\|\leq C|\beta(z)|\,\|Q_h\widehat{u_{ht}}\|
\leq C|z|^{\mu}\|Q_h\widehat{u_{ht}}\|.$$ 
In view of \eqref{sm3}, we have $\|\widehat{u_{ht}}(z)\|=\|z\hat{F}_{h}(z)v_h-v_h\|\leq C \|v_h\|$. Therefore, 
if \eqref{sym} is satisfied then $\|\hat{E}_h(z)\bar{A}_hQ_h\widehat{u_{ht}}\|
\leq C h^2|z|^{\mu}\|\widehat{u_{ht}}\|\leq C h^2|z|^{\mu}\|v_h\|.$
  Now, \eqref{e1-nnn} follows by integration and this concludes the rest of the proof.
\end{proof}

By interpolating \eqref{e1-n} and \eqref{e1-nnn} we obtain for $q\in [0,2]$  
$$
\|\xi(t)\|+h\|\nabla\xi(t))\|\leq Ct^{-(\mu+1)(1-q/2)}h^2\|\bar A_h^{q/2} v_h\|,\quad  t>0.
$$
Notice that $\mu=\alpha$ for problems \eqref{EE} and \eqref{PIDE-S},  while $\mu=-\alpha$ for the  Rayleigh-Stokes problem \eqref{RS}. Hence, for the Rayleigh-Stokes problem the previous estimate reads:
$$
\|\xi(t)\|+h\|\nabla\xi(t))\|\leq Ct^{-(1-\alpha)(1-q/2)}h^2\|\bar A_h^{q/2} v_h\|,\quad  t>0,
$$
provided \eqref{sym} is satisfied.



We finally  consider the  following class of time-fractional order  diffusion problems:
\begin{equation}\label{eq-caputo}
^C\partial_t^{\alpha}u (x,t)+\cL u(x,t)=0,
\end{equation}
where  $^C\partial_t^{\alpha}$ is the  fractional Caputo derivative of order $\alpha\in(0,1)$.
For this class of equations, optimal error estimates for the semidiscrete  FE method  have been 
established in \cite{JLZ2013}. The FVE method applied to \eqref{eq-caputo} is to seek  $\bar{u}_h\in V_h$ such that 
\begin{equation*} \label{semi-GG}
\Bac\bar{u}_h + \bar A_h\bar{u}_h=  0\quad
\quad t\in (0,T], \quad \bar{u}_h(0)=v_h.
\end{equation*}
Again a  comparison between  the FE solution and FVE solution along with Laplace techniques and 
semigroup type properties as has been done in Section \ref{sec:error} yields {\it a priori} FVE error estimates for the 
fractional order evolution problem \eqref{eq-caputo} for both smooth and nonsmooth initial data. Since the
proof technique is similar to the tool used in Section $4$, we skip the details.


%

\begin{figure}[h]
\begin{center}
  \caption{Triangular meshes with $M=8$, (a) symmetric mesh (b) nonsymmetric mesh.}  
  \label{Fig:meshes}
   \includegraphics[width=6cm, height=6cm]{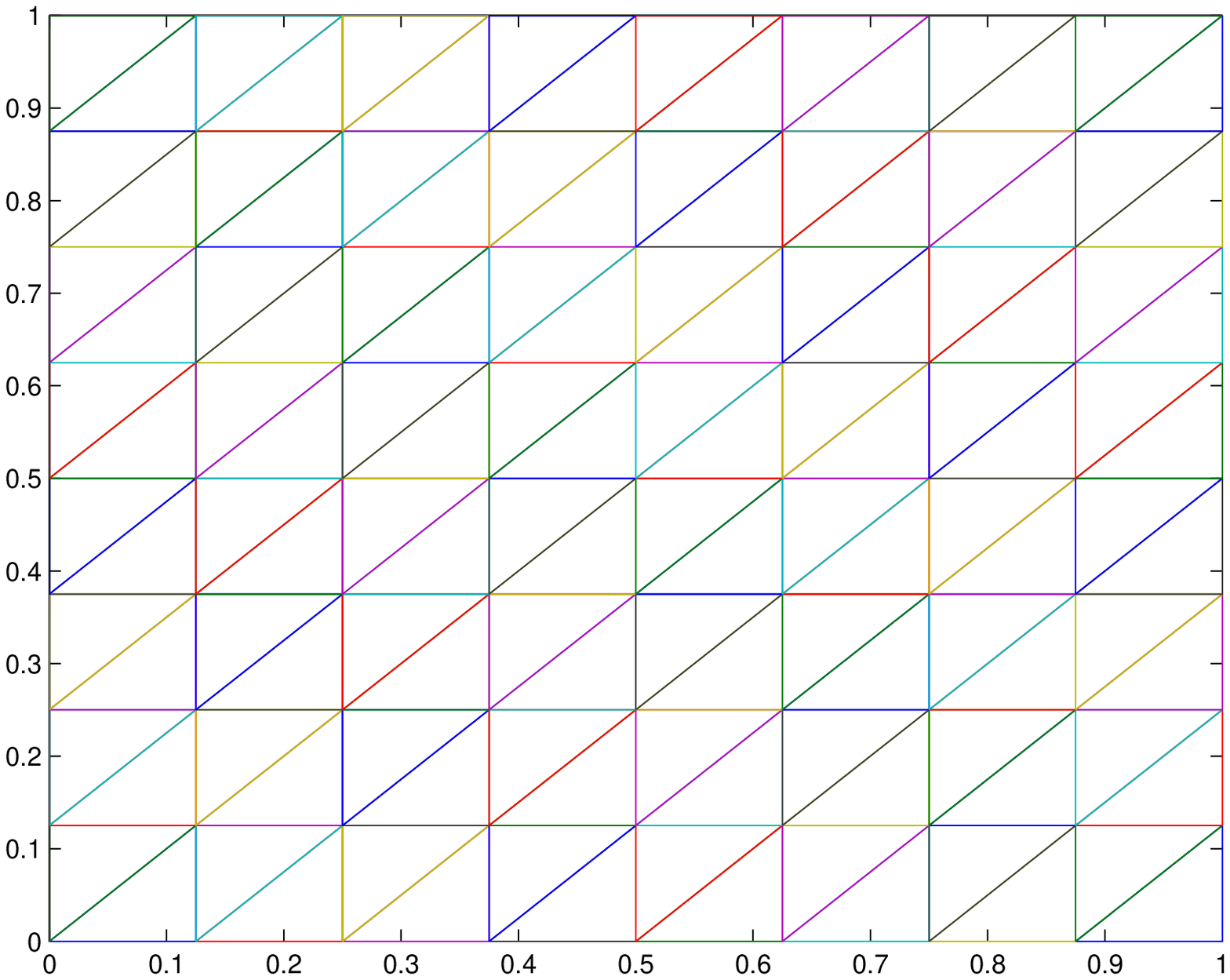}
   $\;$
  \includegraphics[width=6cm, height=6cm]{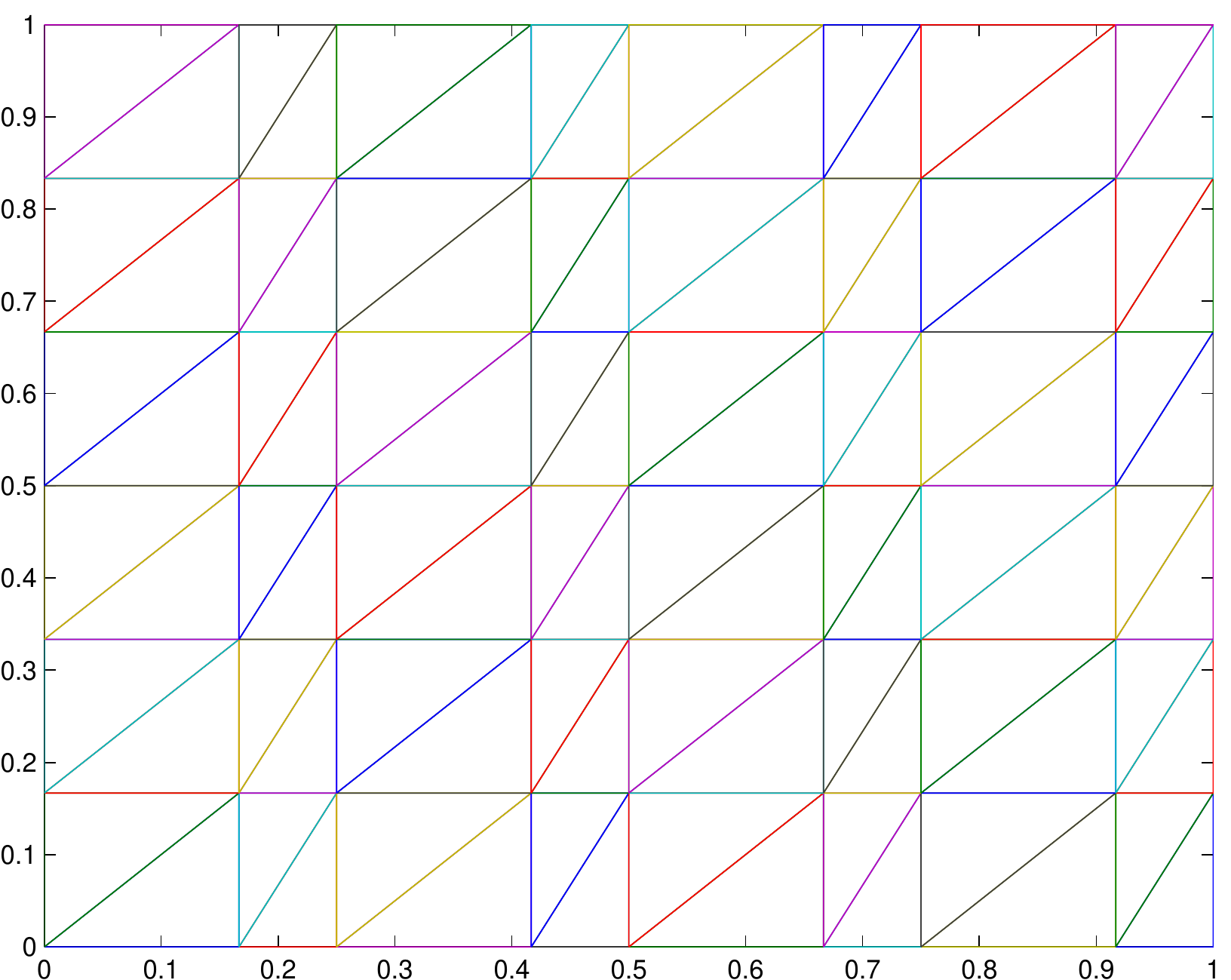}
\end{center}
\end{figure}

\subsection{Derivation by the lumped mass FE method} 
In this subsection, we extend our analysis to  the lumped mass FE method applied to the 
time-fractional diffusion problem \eqref{main}.
For completeness,  we briefly describe, below, this approximation.
For $K\in\mathcal{T}_h$ with vertices $P_i$, $i=1,2,3$, consider the quadrature formula
\begin{equation*}\label{QF}
Q_{K,h}(f)=\frac{|K|}{3}\sum_{i=1}^3f(P_i)\approx\int_Kf\,dx.
\end{equation*}
Then, we define an approximation of the $L^2$-inner product on $V_h$ by
\begin{equation*}\label{QF}
\langle w,\chi\rangle=\sum_{K\in\mathcal{T}_h} Q_{K,h}(w\chi).
\end{equation*}
The lumped mass Galerkin FE method reads: find $\bar u_h(t)\in V_h$ satisfying
\begin{equation*} \label{semi-LM}
\langle\bar{u}_h',\chi\rangle+  a(\Ba \bar{u}_h,\chi)=  0\quad
\forall \chi\in V_h,\quad t\in (0,T], \quad \bar{u}_h(0)=v_h.
\end{equation*}
In operator form, the method  can be written as 
\begin{equation*} \label{LM-OF}
 \bar u_h'(t)+\Ba \bar A_h  \bar u_h(t)=  0, \quad t>0, \quad  u_h(0)=v_h,
\end{equation*}
where $\bar A_h:V_h\rightarrow V_h$ is the discrete Laplacian corresponding to the inner product 
$\langle\cdot,\cdot\rangle$   given by
\begin{equation} \label{LM-OF-2}
\langle\bar A_h w,\chi\rangle=(\nabla w,\nabla \chi)  \quad \forall w,\chi\in V_h.
\end{equation}
%
Now, introduce $\xi(t)=\tilde{u}_h(t)-u_h(t)$ with $u_h(t)$ being the Galerkin FE solution. Then    
$\xi$  satisfies 
 \begin{equation*} \label{LM-OF-1}
 \xi'(t)+\Ba \bar A_h  \xi(t)=  -\bar A_hQ_hu_{ht}, \quad t>0, \quad  \xi(0)=0, 
\end{equation*}
where $Q_h:V_h\rightarrow V_h$ is the quadrature error defined by
\begin{equation}\label{m8-LM}
(\nabla Q_h\chi,\nabla\psi)=\epsilon_h(\chi,\psi):=\langle\chi,\psi\rangle-(\chi,\psi)\quad \forall \psi \in V_h.
\end{equation}
Since the operators $\bar{A}_h$ and $Q_h$ defined by \eqref{LM-OF-2} and \eqref{m8-LM} have  properties 
similar  to the corresponding operators  in the FVE method  in Section \ref{sec:error}, (see also \cite{CLT-2012}), 
then the error estimates  for the lumped mass FE method and  their proofs are quite analogous  to the results proved
in Sections \ref{sec:error} and \ref{sec:discrete} for the FVE method. Therefore, we can easily derive   optimal error estimates and
 we shall not pursue it further. 



\section{ Numerical Experiments}  \label{sec:NE}
\se


In this section, we present some numerical tests to validate our theoretical results. We choose  $\Omega=(0,1)\times (0,1)$ and perform the
computation on two families of symmetric and nonsymmetric triangular meshes. The  symmetric meshes are uniform with  mesh  size $h=\sqrt{2}/M$, where $M$ is the number of equally spaced subintervals in both the $x$- and $y$-directions, see Figure \ref{Fig:meshes}(a). For the nonsymmetric  meshes,  we choose $M$ subintervals in the $x$-direction
and $3M/4$  equally spaced subintervals  in the $y$-direction with the assumption that $M$ is divisible by 4. The intervals in the $x$-direction are  of  lengths $4/3M$ and $2/3M$ and   distributed such that they form an alternating series as shown in Figure \ref{Fig:meshes}(b). One can notice that the  nonsymmetric mesh  defines a triangulation that is not symmetric at any vertex, see  \cite[Section 5]{CLT-2013} for more details.


We consider three numerical examples with smooth and nonsmooth initial data. By separation of variables, the exact solution of problem \eqref{main} can represented by a rapidly converging Fourier series
\begin{equation}\label{eq: u series}
u(x,y,t)=2\sum_{m,n=1}^\infty (v, \phi_{mn})
    E_{\alpha}(-\lambda_{mn} t^{\alpha})\phi_{mn}(x,y),
\end{equation}
 where 
 $E_{\alpha}(t):=\sum_{p=0}^\infty\frac{t^p}{\Gamma(\alpha p+1)}$ is the
 Mittag-Leffler function and 
$$
\phi_{mn}(x,y)=2\sin(m \pi x)\sin(n \pi y)
\quad\text{and}\quad
\lambda_{mn}=(m^2+n^2)\pi^2\quad{\rm for}~~ m\,,n=1, 2, \ldots
$$
are the orthonormal eigenfunctions and
corresponding eigenvalues of $-\Delta$ subject to homogeneous Dirichlet boundary conditions.
In our computation, we evaluate the exact solution by truncating the Fourier series in \eqref{eq: u series} 
after $60$ terms.

\begin{table}[t]
\begin{center}
\caption{$L^2$-error for cases (a)-(c) on symmetric meshes, $\alpha=0.75$, $h=1/400$.}
\label{table:1}
\begin{tabular}{|r|cccc|}
\hline
$N$&
BE & rate& SBD & rate
\\
\hline
\multicolumn{5}{|c|}{Case (a)}\\
\hline
 5& 4.8880e-003 &         &  1.3161e-003 & \\
 10& 2.1844e-003 &   1.16  &  3.1530e-004 & 2.06\\
 20& 1.0367e-003 &   1.08 &  7.2627e-005 & 2.12\\
 40& 5.0547e-004 &   1.04 &  1.6922e-005 & 2.10\\
 80& 2.4952e-004 &  1.02 &  3.6949e-006 & 2.18\\
\hline
\multicolumn{5}{|c|}{Case (b)} \\
\hline
  5   &4.8270e-003 &          &1.3857e-003 &  \\       
  10  &2.1578e-003 &1.16  &3.3341e-004 &2.06\\
  20  &1.0247e-003 &1.07  &7.7019e-005 &2.11\\
  40  &5.0021e-004 &1.03  &1.7736e-005 &2.19\\
  80  &2.4751e-004 &1.02  &3.6842e-006 &2.27\\

\hline
\multicolumn{5}{|c|}{Case (c)}\\
\hline
 5  & 2.9708e-003  &          & 8.2449e-004 &       \\   
 10 &  1.3300e-003 & 1.16  & 2.0483e-004 &  2.01\\
 20 &  6.3206e-004 & 1.07  & 4.7324e-005 &  2.11\\
 40 & 3.0862e-004  & 1.03  & 1.0961e-005 &  2.11\\
 80 & 1.5275e-004  & 1.01  & 2.4291e-006 &  2.17\\
\hline
\end{tabular}
\end{center}
\end{table}

We consider the following initial data to illustrate the convergence theory.
\begin{itemize}
\item[ (a)] With  $v=xy(1-x)(1-y)$, its Fourier sine coefficients become
\[
(v,\phi_{mn})=8(1-(-1)^m)(1-(-1)^n)(mn\pi^2)^{-3},\quad {\rm for}~~m,n=1,\,2,\ldots.
\] 
This example represents the smooth case as $v \in \dot H^{2}(\Omega)$.
\item[ (b)] For this example, choose $v=xy\chi_{(0,1/2]\times(0,1/2]} +(1-x)y\chi_{(1/2,1)\times(0,1/2]}+
 x(1-y)\chi_{(0,1/2]\times(1/2,1)}+(1-x)(1-y)\chi_{(1/2,1)\times(1/2,1)}$,  where $\chi_D$ denotes 
 the characteristic function on the domain $D$. This initial data  is less smooth compared to 
 the previous case. One can verify that its Fourier coefficients are given by 
\[
(v,\phi_{mn})=2(1-(-1)^m)(1-(-1)^n)(mn\pi^2)^{-2}(-1)^{mn},\quad {\rm for}~~m,n=1,\,2,\ldots.
\]
Note that $v \in \dot H^{1+\epsilon}(\Omega)$ for $0\le \epsilon<1/2$.
\item[(c)] With  $v=\chi_{(0,1/2[\times(0,1)}(x,y)$, its Fourier sine coefficients become
\[
(v,\phi_{mn})=2(1-\cos(m\pi/2))(1-(-1)^n)(mn\pi^2)^{-1},\quad {\rm for}~~m,n=1,\,2,\ldots.
\]
Here,  $v \in \dot H^{\epsilon}(\Omega)$ for $0\le \epsilon<1/2$. 
\end{itemize}

\begin{table}
\begin{center}
\caption{Errors for cases (a)-(c) on symmetric meshes, $\alpha=0.75$, $\tau=1/500$.}
\label{table:2}
\begin{tabular}{|r|cccc|}
\hline
$M$&
$L^2$-norm error & rate&$L^\infty$-norm error& rate\\
\hline
\multicolumn{5}{|c|}{Case (a)}\\
\hline
  8 & 1.4556e-003 &   &  1.0596e-004 &  \\
  16 & 3.7356e-004 &  1.96 &  2.7366e-005 &  1.95\\
  32 & 9.3259e-005 &  2.00 &  6.8602e-006 &  2.00\\
  64 & 2.2546e-005 &  2.05 &  1.6792e-006 &  2.03\\
  128 & 4.8155e-006 &  2.23 &  3.8055e-007 &  2.14\\
\hline
\multicolumn{5}{|c|}{Case (b)}\\
\hline
  8 &  8.9301e-004 &     &  2.0405e-004 &  \\
  16 &  2.2952e-004 &  1.96 &  5.5397e-005 &  1.88\\
  32 &  5.7285e-005 &  2.00 &  1.4340e-005 &  1.95\\
  64 &  1.3820e-005 &  2.05 &  3.5649e-006 &  2.01\\
  128 &  2.9842e-006 &  2.21 &  8.0446e-007 &  2.15\\
\hline
\multicolumn{5}{|c|}{Case (c)}\\
\hline
  8   &  7.1870e-004 &        &  2.7011e-003 &  \\
  16 &  1.8148e-004 &  1.99 &  8.7438e-004 &  1.63\\
  32  &  4.5181e-005 &  2.01 &  2.7169e-004 &  1.69\\
  64  &  1.1033e-005 &  2.03 &  7.6187e-005 &  1.83\\
  128 &  2.6557e-006 &  2.05 &  2.0470e-005 &  1.90\\
\hline
\end{tabular}
\end{center}
\end{table}

To examine  the temporal accuracy of the proposed schemes, we employ a uniform temporal mesh with a time step $\tau=T/N$, where $T=0.5$ is the time of interest in all numerical experiments. 
We fix the mesh size $h$ at $h = 1/400$ so that the error incurred by spatial discretization
is negligible, which enable us to examine the temporal convergence rate.
The computation is performed on symmetric meshes. 
We measure the error $e^n =: u(t_n)-U^n$ by the normalized $L^2(\Omega)$-norm 
$\| e^n\|_{L^2(\Omega)}/\|v\|_{L^2(\Omega)}$. 
The numerical results are presented in  Table \ref{table:1} for the three proposed cases (a)-(c).
In the table, BE and SBD denote the convolution quadrature generated by the backward Euler  and 
the second-order backward difference methods, respectively. The {\tt rate} refers to the empirical convergence
rate, when the time step size $\tau$ halves. From the Table \ref{table:1}, a convergence rate of order 
$O(\tau)$ and $O(\tau^2)$ 
is observed for the BE  and  SBD schemes, respectively, and clearly both schemes exhibit a very steady 
behavior for both smooth and nonsmooth data, which agree well with our convergence
theory. Additional  numerical experiments with different values of fractional order $\alpha$ have shown 
similar  convergence rates. It was, in particular, observed that the error decreases as the fractional order $\alpha$ increases. 
More details on the behaviour of errors from BE and SBD methods combined with a Galerkin 
FE  discretization in space can be found in \cite{JLZ2016}.

To check the spatial  discretization error,  we fix the time step $\tau=1/500$ and use the SBD scheme so that the temporal discretization error is negligible. We carry out the computation on symmetric meshes.  
In Table \ref{table:2}, we list the normalized $L^2(\Omega)$-norm and $L^\infty(\Omega)$-norms of the error  for the      
 cases (a)-(c). The numerical results show a convergence rate $O(h^2)$ for the $L^2(\Omega)$-norm of the error for smooth and nonsmmoth initial data. A similar convergence rate is obtained in the $L^\infty(\Omega)$-norm (ignoring a logarithmic factor).
The results fully confirm the predicted rates on symmetric meshes. They also show the validity of the  convergence rate 
 in Theorem \ref{thm:Linfty} for case (c) where  $0<q<1$.



\begin{table}
\begin{center}
\caption{ Errors for case (c) on nonsymmetric meshes, $\alpha=0.75$, $\tau=1/500$.}
\label{table:Nonsym-1}
\begin{tabular}{|r|cccc|}
\hline
 & \multicolumn{4}{|c|}{FVEM}\\
\hline
$M$& $L^2$-norm error & rate&$L^\infty$-norm error& rate\\
\hline
   8  & 1.1209e-003  &   & 4.1704e-003 &   \\
  1.6& 2.7755e-004  & 2.01  & 1.3697e-003 &  1.61\\
  32 & 6.8036e-005  & 2.03  & 4.1953e-004 &  1.71\\
  64 & 1.6529e-005  & 2.04  & 1.1120e-004 &  1.92\\
  128& 3.9610e-006  & 2.06  & 3.0306e-005 &  1.88\\
  \hline
& \multicolumn{4}{|c|}{Lumped mass FEM}\\ 
 \hline
  8  &  1.1627e-003 &       &  4.1512e-003 &   \\
  16 &  3.1215e-004 &  1.90 &  1.3697e-003 &  1.60\\
  32 &  8.2238e-005 &  1.92 &  4.1472e-004 &  1.72\\
  64 &  2.1382e-005 &  1.94 &  1.1120e-004 &  1.90\\
  128&  5.8007e-006 &  1.88 &  3.3495e-005 &  1.73\\
  \hline
\end{tabular}
\end{center}
\end{table}

For nonsymmetric meshes, we are especially interested in spatial errors for nonsmooth initial data as the 
convergence theory suggests. In Table \ref{table:Nonsym-1}, we display the $L^2(\Omega)$- and 
$L^\infty(\Omega)$-norms of the error  for case (c) using  the FVE and the lumped mass FE   discretizations
 on  nonsymmetric meshes. The numerical results reveal that both discretizations exhibit a convergence rate 
 of order $O(h^2)$,  which may be seen as an unexpected result. However, as the initial data 
 $v\in \dot H^{1/2-\epsilon}(\Omega)$ for any $\epsilon>0$,  $v$ has some smoothness, and hence,  
 the numerical results do not contradict our theoretical findings. 
In addition, we notice that as the convergence rate is $O(h^2)$ for initial data  
in $\dot H^{1}(\Omega)$, by interpolation in $[0,1]$,  a convergence rate of order $O(h^{3/2})$ is 
expected for $v\in \dot H^{1/2}(\Omega)$. In our case, the smoothness of the particular initial data $v$ 
could then have  a positive effect on the convergence rate.

In \cite{CLT-2013}, the authors considered the nonsymmetric partition shown in Figure \ref{Fig:meshes}(b) and provided an initial  data for which the  optimal $L^2$-convergence does not hold. They proved that the best possible error bound in this case is of order 1, see Proposition 5.1 of \cite{CLT-2013}. 
Earlier in \cite{CLT-2012}, the same authors have established a one-dimensional example for which the 
$O(h^2)$ nonsmooth data error does not hold for the lumped mass FE  method. 
We, then, carried out our computation based on the example  in \cite[Proposition 5.1]{CLT-2013}. 
The numerical results are presented in Table \ref{table:Nonsym-2} using the SBD scheme. The error 
reported in the table represents the quantity $\xi(t)$ which measures the difference between the 
Galerkin FE solution and the FVE solution for the first set of numerical results and between the 
Galerkin FE solution and the lumped mass FE solution for the second set. As the nonsmooth data error 
from the standard Galerkin FE is always $O(h^2)$, the error from the considered methods is 
dominated by $\xi(t)$. From the Table \ref{table:Nonsym-2}, an order $O(h)$ of convergence rate is 
observed for both methods, which agrees well with the results in \cite{CLT-2013} and confirms our 
theoretical analysis.





\begin{table} 
\begin{center}
\caption{ Errors for case (d)  on nonsymmetric meshes, $\alpha=0.75$, $\tau=1/500$.}
\label{table:Nonsym-2}
\begin{tabular}{|r|cccc|}
\hline
 & \multicolumn{4}{|c|}{FVEM}\\
\hline
$M$& $L^2$-norm error & rate&$L^\infty$-norm error & rate \\
\hline
  8   & 9.9247e-005 &      & 3.8454e-004 & \\
  16  & 2.3133e-005 & 2.10 & 1.1238e-004 & 1.77\\
  32  & 1.1497e-005 & 1.01 & 4.8469e-005 & 1.21\\
  64  & 5.1181e-006 & 1.17 & 2.0545e-005 & 1.24\\
  128 & 2.5579e-006 & 1.00 & 9.7156e-006 & 1.08\\
 \hline
& \multicolumn{4}{|c|}{Lumped mass FEM}\\ 
 \hline
  8   & 4.4924e-004 &   & 1.7395e-003 &  \\
  16  & 1.0429e-004 &  2.11 & 5.0652e-004 &  1.78\\
  32  & 5.1762e-005 &  1.01 & 2.1821e-004 &  1.21\\
  64  & 2.3035e-005 &  1.17 & 9.2466e-005 &  1.24\\
  128 & 1.1511e-005 &  1.00 & 4.3722e-005 &  1.08\\
  \hline
\end{tabular}
\end{center}
\end{table}

For completeness, we extend our numerical study to examine some of the  problems presented in Section 6, namely; 
the subdiffusion problem \eqref{eq-caputo} with a fractional Caputo derivative  and the wave-diffusion problem  \eqref{EE}. 
The numerical solution in each case  is obtained by using the FVE method in space and a convolution quadrature in time 
generated by the second-order backward difference method. 
We run both examples  with the initial data $v$ given in case (c).

\begin{table}
\begin{center}
\caption{Numerical results for problem \eqref{eq-caputo}, $\alpha=0.75$, $\tau=1/500$.}
\label{table:5}
\begin{tabular}{|r|cccc|}
\hline
$M$&
$L^2$-norm error & rate&$L^\infty$-norm error& rate\\
\hline
  8   &  7.1870e-004 &        &  2.7011e-003 &  \\
  16 &  1.8148e-004 &  1.99 &  8.7438e-004 &  1.63\\
  32  &  4.5181e-005 &  2.01 &  2.7169e-004 &  1.69\\
  64  &  1.1033e-005 &  2.03 &  7.6187e-005 &  1.83\\
  128 &  2.6557e-006 &  2.05 &  2.0470e-005 &  1.90\\
\hline
\end{tabular}
\end{center}
\end{table}

For the first problem, we employ the second-order time discretization scheme derived in \cite[formula (2.16)]{JLZ2016}. 
The computed errors  are
presented in Table 5  and are clearly identical to the results in Table 2.
Even though it is known that the two representations \eqref{eq-caputo} and \eqref{a1}  are equivalent,  
the numerical methods obtained for each representation are in general different. 
However, in the current case, the fact that the time discrete schemes are equivalent is due to the
feature of the convolution quadrature, in particular, to the properties given in \eqref{s1}. 

\begin{figure}
    \centering
    \subfigure[$\alpha = 0.1$]
    {
        \includegraphics[width=4.2cm]{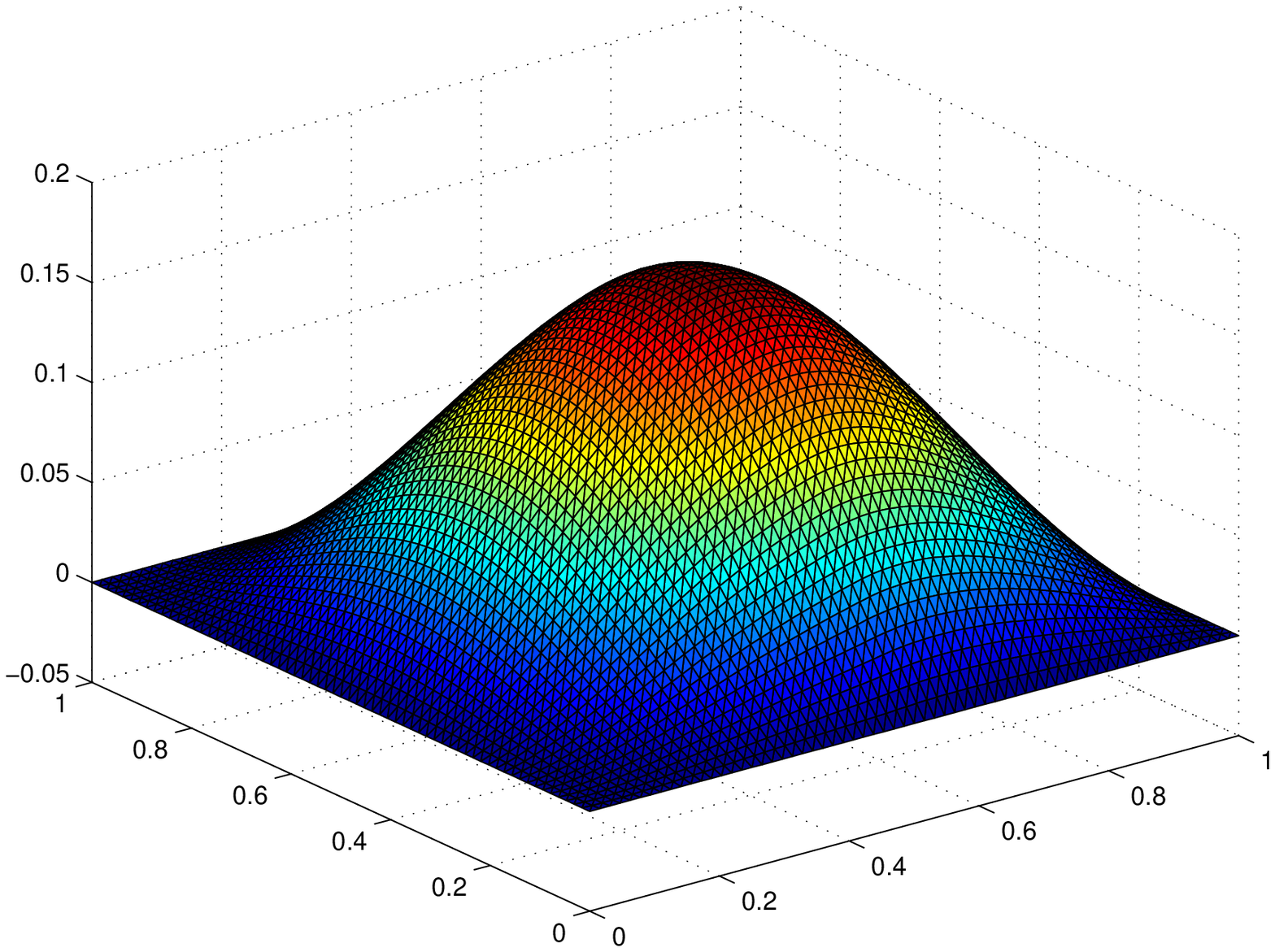}
        \label{fig:first_sub}
    }
    \subfigure[$\alpha = 0.5$]
    {
        \includegraphics[width=4.2cm]{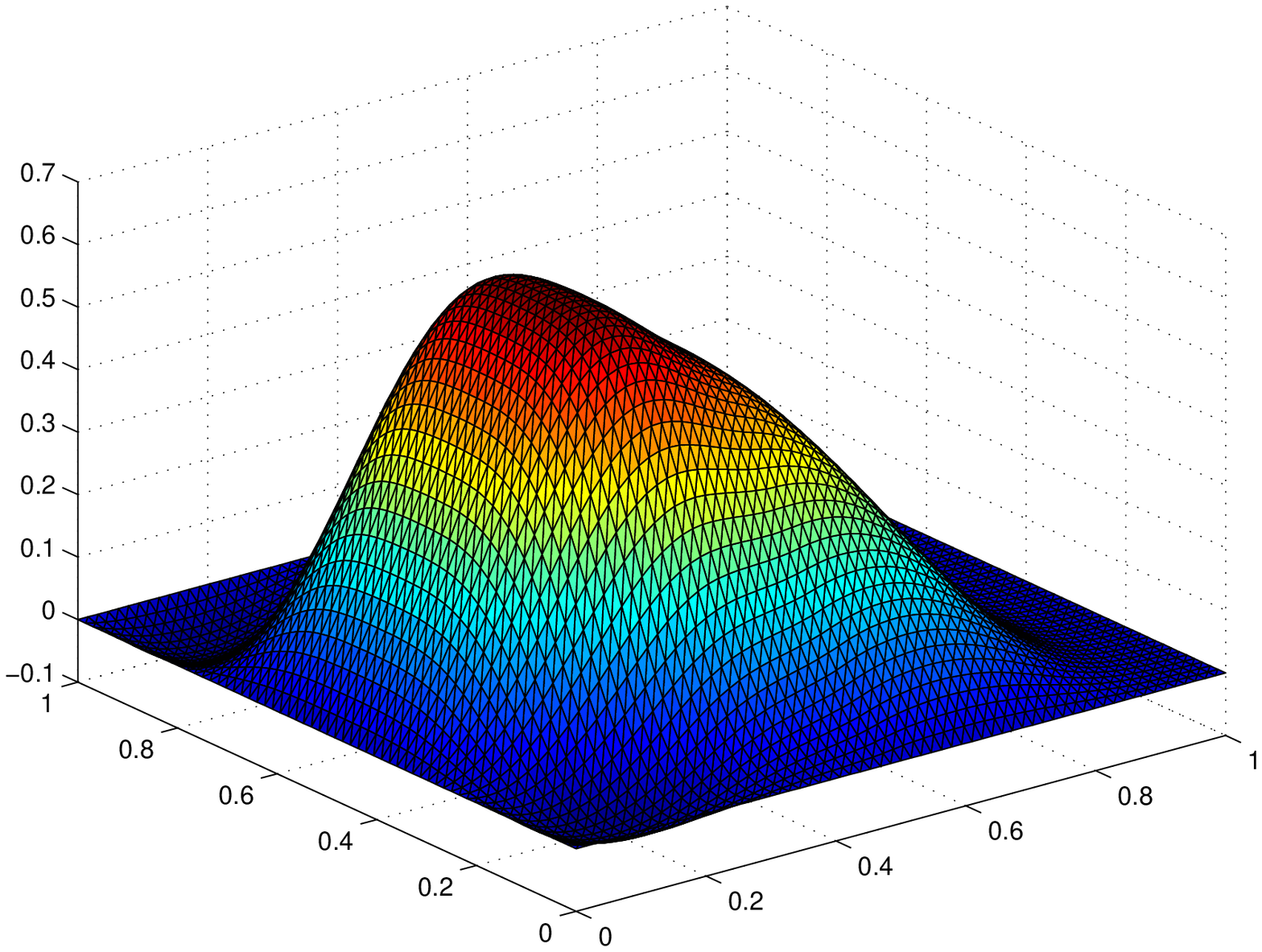}
        \label{fig:second_sub}
    }
    \subfigure[$\alpha = 0.9$]
    {
        \includegraphics[width=4.2cm]{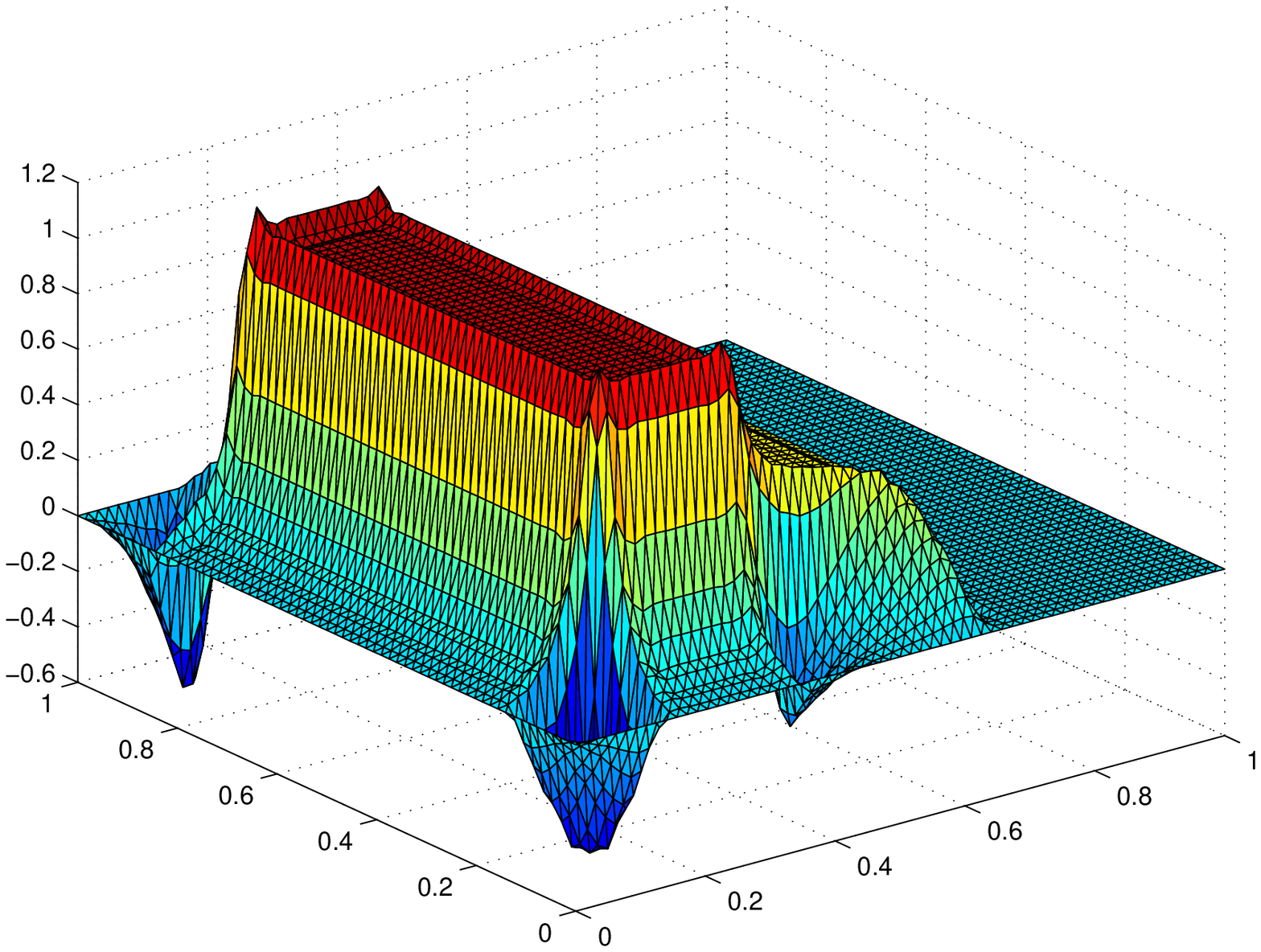}
        \label{fig:third_sub}
    }
    \caption{The profile of  solutions of problem \eqref{EE} at $t=0.1$ with different values of $\alpha$.}
    \label{fig:transition}
\end{figure}

For the wave-diffusion problem, the numerical results are listed in Table 6 for $\alpha = 0.5$.
We observe a $O(h^2)$ convergence  for  the $L^2(\Omega)$- and $L^\infty(\Omega)$-norm of the errors which confirms our predictions.  
It is known that the model \eqref{EE} interpolates the heat and wave equations
when  the fractional order $\alpha$ increases from zero to one. This transition is observed numerically.  
In Figure \ref{fig:transition}, we display the profile of the numerical solutions to case (c) at time $t=0.1$ with different values of $\alpha$. We  observe that, the closer $\alpha$ is to zero, the slower is the decay. Furthermore, the  oscillations in Figure \ref{fig:transition}(a) are inherited from the 
$L^2$-projection $P_hv$  which is oscillatory. This reflects, in particular, the wave feature of the 
model \eqref{EE}. 

\begin{table} 
\begin{center}
\caption{Numerical results for problem \eqref{EE},  $\alpha=0.5$, $\tau=1/500$.}
\label{table:6}
\begin{tabular}{|r|cccc|}
\hline
$M$& $L^2$-norm error & rate&$L^\infty$-norm error & rate \\
\hline
  8   & 5.7494e-003 &             & 1.0952e-002 &         \\
  16  & 1.4393e-003 & 2.00 & 2.7976e-003 & 1.97\\
  32  & 3.5725e-004 & 2.01 & 7.2567e-004 & 1.94\\
  64  & 8.5491e-005 & 2.06 & 1.9564e-004 & 1.89\\
  128 & 1.9769e-005 & 2.11 & 5.1351e-005 & 1.93\\
  \hline
\end{tabular}
\end{center}
\end{table}

	
\end{document}